\newtheorem{remark}{Remark}[section]
\def\Jc{{\mathcal J}}
\def\Tc{{\mathcal T}}
\def\Asf{{\mathsf A}}
\def\Bsf{{\mathsf B}}
\def\Csf{{\mathsf C}}
\def\Msf{{\mathsf M}}
\def\Nsf{{\mathsf N}}
\def\Qsf{{\mathsf Q}}
\def\Isf{{\mathsf I}}
\def\Psf{{\mathsf P}}
\def\Rsf{{\mathsf R}}
\def\div{{\mbox{div }}}
\def\vep{\varepsilon}
\def\n{\nabla}
\def\p{{\partial}}
\def\op{\overline{p}}
\def\nab{\nabla}
\def\Ome{\Omega}
\def\Del{\Delta}
	\def\vphi{\varphi}
\def\vphihm{\vphi^{m}_h}
\def\phm{p^{m}_h}
\def\of{\overline{f}}
\newcommand{\norm}[2]{\left\Vert#1\right\Vert_{#2}}
\newcommand{\prodt}[2]{\left(#1,#2\right)}
\newcommand{\dual}[2]{\left\langle#1, #2\right\rangle}
\newcommand{\bfphi}{\mbox{\boldmath$\phi$}}
\newcommand{\bfpsi}{\mbox{\boldmath$\psi$}}
\newcommand{\bfmu}{\mbox{\boldmath$\mu$}}
\newcommand{\bfnu}{\mbox{\boldmath$\nu$}}
\newcommand{\bfxi}{\mbox{\boldmath$\xi$}}
\newcommand{\bfvphi}{\mbox{\boldmath$\varphi$}}
\def\bu{\mathbf{u}}
\def\bx{\mathbf{x}}
\def\bp{\mathbf{p}}
\def\bq{\mathbf{q}}
\def\obu{\overline{\bu}}
\def\omu{\overline{\mu}}
\def\ovphi{\overline{\vphi}}
\def\oovphi{\overline{\overline{\vphi}}}
\def\bg{\mathbf{g}}
\def\bn{\mathbf{n}}
\def\bB{\mathbf{B}}
\def\bL{\mathbf{L}}
\def\buhm{\bu^m_h}
\begin{document}

\title{Analysis of a Darcy-Cahn-Hilliard Diffuse Interface Model for 
the Hele-Shaw Flow and its Fully Discrete Finite Element Approximation}
\markboth{XIAOBING FENG AND STEVEN WISE}{FINITE ELEMENT METHODS FOR 
HELE-SHAW FLOW}

\author{
Xiaobing Feng\thanks{Department of Mathematics, 
The University of Tennessee, Knoxville, TN 37996, U.S.A.
(xfeng@math.utk.edu). The work of this author was partially supported by 
the NSF grant DMS-0710831.}
\and
Steven Wise\thanks{Department of Mathematics, 
The University of Tennessee, Knoxville, TN 37996, U.S.A.
(swise@math.utk.edu). The work of this author was partially supported by the NSF grant DMS-0818030.}
}

\maketitle

\begin{abstract}
In this paper we present PDE and finite element analyses for  
a system of partial differential equations (PDEs) consisting of the 
Darcy equation and
the Cahn-Hilliard equation, which arises as a diffuse interface model
for the two phase Hele-Shaw flow. In the model the two sets of equations 
are coupled through an extra phase induced force term in the 
Darcy equations and a fluid induced transport term in the Cahn-Hilliard 
equation. We propose a fully discrete implicit finite element method for 
approximating the PDE system, which consists of the implicit Euler
method combined with a convex splitting energy strategy for the temporal
discretization, the standard finite element discretization for the pressure 
and a split (or mixed) finite element discretization for the fourth order 
Cahn-Hilliard equation. 
It is shown that the proposed numerical method satisfies 
a mass conservation law in addition to a discrete energy law that 
mimics the basic energy law for the Darcy-Cahn-Hilliard phase 
field model and holds uniformly in the phase field parameter $\varepsilon$. 
With help of the discrete energy law, we first prove that the fully discrete 
finite method is unconditionally energy stable and uniquely solvable at 
each time step. We then show that, using the compactness method,
the finite element solution has an accumulation point that is 
a weak solution of the PDE system. As a result, 
the convergence result also provides a constructive proof of the existence 
of global-in-time weak solutions to the Darcy-Cahn-Hilliard phase field model
in both two and three dimensions. 
%
%
Finally, we propose a nonlinear multigrid iterative algorithm 
to solve the finite element equations at each time step.
Numerical experiments based on the overall solution method of combining
the proposed finite element discretization and a nonlinear 
multigrid solver are presented to validate the theoretical
results and to show the effectiveness of the proposed fully discrete 
finite element method for approximating the Darcy-Cahn-Hilliard phase 
field model.
\end{abstract}

\begin{keywords}
Two phase Hele-Shaw flow, diffuse interface model, Darcy law, Cahn-Hilliard 
equation, energy splitting, finite element method, nonlinear multigrid.
\end{keywords}
                                                                                
\begin{AMS}
65M60, 35K55, 76D05
\end{AMS}
                                                                                
\section{Introduction}\label{sec-1}
Hele-Shaw flow refers to the motion of (one or more) viscous fluids
between two flat parallel plates separated by an infinitesimally small gap. 
Such a physical setup is often called a Hele-Shaw cell and was originally
designed by Hele-Shaw to study two dimensional potential flows \cite{hele-shaw}. 
Various fluid mechanics problems can be approximated by 
Hele-Shaw flows and thus the research of those flows is of
great practical importance. In addition, the relative simplicity 
of the governing equations of these flows makes Hele-Shaw flows
ideal test cases in which rigorous mathematical theory and 
efficient numerical methods can be developed for studying 
interfacial dynamics --- such as the formation of singularities and topological 
changes --- in immiscible fluids (cf. \cite{llg02a,llg02b,Nie_Tian98} 
and the references therein).

The governing equation of Hele-Shaw flow is identical to that of the inviscid potential flow and to the flow of fluids through a porous medium, because the gap-averaged velocity of the fluid is given by Darcy's law.  Specifically, the two phase Hele-Shaw flow takes the form (cf. \cite{llg02a} and the references therein):
	\begin{alignat}{4}
\bu &=-\frac{1}{12 \eta} \bigl(\nab p-\rho \bg \bigr) ,   &\quad  \div \bu &=0 &&\qquad \mbox{in } \Ome_T\setminus \Gamma_t,
	\label{eq1.1} 
	\\
[p] &=\gamma \kappa , &\quad [\bu\cdot \bn] &=0 &&\qquad \mbox{on } \Gamma_t,
	\label{eq1.3}
	\end{alignat}
with a given set of initial and boundary conditions. Here $\Ome_T=\Ome\times (0,T)$, where $\Ome\subset\mathbb{R}^2$ is a bounded domain.   $\Gamma_t$ denotes the interface  between the fluids at the time $t$ with the normal $\bn$. $\bu$ is the fluid velocity and $p$ stands for the pressure of the fluids.  The symbol $[p]$ stands for the jump of $p$ across the interface $\Gamma_t$.   $\eta$ is the viscosity, which may have different (positive constant) values on both sides of $\Gamma_t$.  ${\bf g}$ is the gravitational force per unit mass; and $\rho$ is the mass density of the fluid, which again can take different (positive constant) values on both sides of the interface.  Equation~\eqref{eq1.1}a is Darcy's law \cite{bear}, and \eqref{eq1.1}b implies that the fluids  are incompressible. Equations~\eqref{eq1.3}a and \eqref{eq1.3}b are the boundary conditions at the fluid-fluid interface, which represent the mathematical descriptions of the balance of the surface tension forces and the balance of mass, respectively. Equation~\eqref{eq1.3}a is called the Laplace-Young condition, where $\gamma$ is the dimensionless surface tension coefficient and $\kappa$ is the (mean) curvature of the  interface $\Gamma_t$.  Notice that the tangential component of the velocity $\bu$ may experience a jump across the interface~\cite{llg02a}.

Computationally, the above moving interface problem is difficult
to solve directly due to the existence of the surface tension on 
the interface. In addition, during the evolution the fluid interface 
may experience topological changes such as self-intersection, pinch-off,
splitting, and fattening. When that happens, the classical solution of
the moving interface problem ceases to exist. In such cases
it is very delicate matter to develop a proper notion of generalized
solutions, and it becomes even more challenging to compute the generalized 
solutions when they can be defined. 

To overcome the difficulties, an alternative approach for solving
moving interface problems is the {\em diffuse interface 
theory}, which was originally developed as methodology for
modeling and approximating solid-liquid phase transitions in
which the effects of surface tension and non-equilibrium
thermodynamic behavior may be important at the interface.
In the theory, the interface is represented as a thin layer
of finite thickness, as opposed to a sharp interface. Such an idea 
dates to Poisson, Gibbs, Rayleigh, van der Waals,
and Korteweg (see \cite{mcfadden,llg02a} and the references therein).
The approach then uses an auxiliary function (called the phase field function) to indicate the ``phase". The phase field function, denoted by 
$\vphi$ below, assumes distinct
values in the bulk phases away from the interfacial region, through
which the phase function varies smoothly.  The interface itself
can be associated with an intermediate contour/level set of the
phase function (cf. \cite{am2,alikakos94,cahn58,lt,mcfadden} 
and the references therein). Generally speaking, the diffuse interface models are expected to converge to some corresponding sharp interface 
models as the width of the interfacial layer tends to zero.

The diffuse interface model for Hele-Shaw flows to be studied 
in this paper is given as follows:
\begin{alignat}{2}\label{eq1.5}
\bu &=-\nab p -\gamma \varphi \nab \mu &&\quad \mbox{in } \Ome_T,\\
\div \bu &=0 &&\quad \mbox{in } \Ome_T, \label{eq1.6}\\
\varphi_t +\bu\cdot \nab \varphi - \vep\Del \mu &=0  
&&\quad \mbox{in } \Ome_T, \label{eq1.7} \\
\mu &= -\vep \Del \varphi + \frac{1}{\vep} f(\varphi) 
&&\quad \mbox{in } \Ome_T, \label{eq1.8}
\end{alignat}
where $f(\varphi)=F'(\varphi)$ and $F(\vphi)=\frac14(\varphi^2-1)^2$ is
the so-called double-well (potential) energy density, and $0< \varepsilon <1$ is a fixed constant.
To close the system, we impose the following initial and boundary conditions
\begin{alignat}{2}
\frac{\p p}{\p \bn} = \frac{\p \mu}{\p \bn} 
= \frac{\p \varphi}{\p \bn} &=0 &&\qquad\mbox{on}\quad \p\Ome_T:=\p\Omega\times (0,T],
\label{eq1.9} \\
\vphi(\, \cdot \, ,0) &=\vphi_0^\vep(\, \cdot\, ) &&\qquad\mbox{in}\quad \Ome.
\label{eq1.11}
\end{alignat}
Note that we have suppressed the superscript $\vep$ in
$(\bu^\vep,p^\vep,\vphi^\vep)$ for the sake of notational simplicity. 
Although $\Ome$ is a two-dimensional domain in the original Hele-Shaw problem,
in this paper we consider $\Ome\subset \mathbb{R}^d$ $(d=2,3)$ 
because the three-dimensional problem also has a mathematical interest
and arises from biological applications~\cite{wise10}.
Here the vector $\bu(x,t)\in \mathbb{R}^d$ and the scalar 
$p(x,t) \in \mathbb{R}$ denote the velocity and the pressure of 
the fluid mixture at the space-time point $(x,t)$, respectively.
The variables $\vphi(x,t),\, \mu(x,t) \in \mathbb{R}$ are the phase field 
function and the chemical potential, respectively. $\vphi$ assumes distinct
values --- namely, $\pm 1$ based on our choice of $F(\vphi)$ --- in the 
bulk phases away from a thin layer of width $O(\vep)$.  This thin layer is 
called the diffuse interface region.  It is natural to define the zero
level curve of $\vphi$, $\Gamma_\vep(t) = \left\{x\in\mathbb{R}^d\, \middle| \, \vphi(x,t) = 0\right\}$, as  the  $d-1$ dimensional interface.
 Eq.~\eqref{eq1.5} with $\gamma=0$ is the Darcy equation \cite{bear}.  
\eqref{eq1.7} and \eqref{eq1.8} without the convection term 
$\bu\cdot\nab \vphi$ is the Cahn-Hilliard 
equation \cite{cahn58,elliott86,mcfadden}.  Note that if $\gamma=0$, the velocity vanishes, and the Cahn-Hilliard equation results.

The system \eqref{eq1.5}--\eqref{eq1.11} is a special case
of the BHSCH (Boussinesq-Hele-Shaw-Cahn-Hilliard) model
proposed by Lee, Lowengrub, and Goodman in \cite{llg02a}.  They showed, using formal asymptotics, that solutions of the BHSCH system converge to those of the Hele-Shaw model \eqref{eq1.1}--\eqref{eq1.3} as their interfacial parameter converges to zero.   We note that the pressure $p$ in \eqref{eq1.5} has a different scaling from that in the BHSCH model in~\cite{llg02a}.  To obtain a similarly scaled pressure, one can simply introduce a redefined pressure in our model via $\tilde{p} = p +\gamma\vphi\mu$.  We shall refer \eqref{eq1.5}--\eqref{eq1.11} as  the 
DCH (Darcy-Cahn-Hilliard) model/system herein.

Define the Cahn-Hilliard energy functional
\begin{equation}\label{energy}
\Jc_\vep(\vphi):=\int_\Ome \Bigl[\frac{\vep}{2}|\nab\vphi|^2 
+ \frac{1}{\vep} F(\vphi) \Bigr]\, dx.
\end{equation}
Like many diffuse interface models (cf. \cite{am2,feng06,lt,llg02a,mcfadden}), 
the DCH system is also a dissipative system as it satisfies the following 
energy dissipation law (see Sec.~\ref{sec-2} for the details):
\begin{equation}\label{eq1.12}
\frac{d \Jc_\vep(\vphi)}{dt}+\vep\norm{\nab \mu}{L^2}^2+
\frac{1}{\gamma} \norm{\bu}{L^2}^2=0.
\end{equation}
As expected, the above energy law plays a vital role in the analysis
of the DCH system and in the design and analysis of numerical
methods for the system (see Secs.~\ref{sec-2}--\ref{sec-4} for
the details). 

This paper consists of four additional sections.
Section~\ref{sec-2} is devoted to the PDE analysis of 
the initial-boundary value problem \eqref{eq1.5}--\eqref{eq1.11}. 
Weak solutions are defined and the uniqueness and regularities 
of weak solutions are established. Section~\ref{sec-3} contains
the formulation of our fully discrete implicit finite element method for
problem \eqref{eq1.5}--\eqref{eq1.11}. 
It is shown that the proposed numerical method satisfies 
a mass conservation law in addition to a discrete energy law that 
mimics the basic energy law for the Darcy-Cahn-Hilliard phase 
field model and holds uniformly in the phase field parameter $\varepsilon$.
With help of the discrete energy law, it also proved that the fully discrete
finite method is unconditionally energy stable and uniquely solvable at
each time step. Section~\ref{sec-4} presents a convergence 
analysis for the proposed fully discrete finite element method.
Using the compactness method it is shown that the finite element 
solution has an accumulation point that is a weak 
solution of problem \eqref{eq1.5}--\eqref{eq1.11}. As a byproduct,
this convergence result also provides a constructive proof of the existence
global-in-time weak solutions to the PDE system \eqref{eq1.5}--\eqref{eq1.11} 
in both two and three dimensions. Finally, in Sec.~\ref{sec-5} we provide 
some results of numerical experiments validating our theoretical results 
and showing the effectiveness
of the proposed fully discrete finite element method.
To solve the nonlinear finite element equations at each time 
step, we propose a nonlinear multigrid iterative method to 
do the job. The details of the nonlinear multigrid solver 
and some other algorithmic and implementation issues 
are described in Appendix \ref{app-multigrid-solver}.

\section{PDE analysis}\label{sec-2}

The standard space notations are used in this paper, we refer to
\cite{adams,ciarlet} for their exact definitions. In particular,
$B^*$ denotes the dual space of a Banach space $B$, and $\bB$ denotes
the vector Banach space $B^d$, where $d$ is the dimension space.
Here we shall assume $d = 2$ or 3. The symbol $(\cdot,\cdot)$ is
used to denote the standard $L^2(\Ome)$ inner product,
$\dual{\cdot}{\cdot}$ stands for the dual product 
between $H^1(\Ome)$ and $(H^{1}(\Ome))^*$. $L^2_0(\Ome)$ denotes
the subspace of $L^2(\Ome)$ whose functions have zero mean.
Throughout the paper, unless stated otherwise,  $c$ and $C$ will be
used to denote generic positive constants which are independent of $p$, $\mu$, $\vphi$, $\bu$, and $\vep$.  If, for example, there is a dependence on $\vep$, 
we shall explicitly write $C=C(\vep)$.  As indicated earlier, 
we shall assume that $0<\vep<1$.

In the next section we shall construct a finite element method which
directly approximates variables $p$, $\mu$, and $\vphi$, but not $\bu$,
which will be computed as an auxiliary variable as needed. Specifically,
we shall approximate the pressure equation by a standard finite
element method and the phase equation by a mixed finite element
method. We remark that it is also a viable strategy that
approximates both the pressure equation and the phase equation
by mixed finite element methods, which we shall study in a separate work.  

To get the governing equations without using $\bu$, substituting the 
expression of $\bu$ in \eqref{eq1.5} into \eqref{eq1.6} and \eqref{eq1.7} we get
\begin{alignat}{2} \label{eq2.1}
\div\left(\nab p+\gamma \vphi\nab \mu\right) &=0 &&\qquad \mbox{in } \Ome_T, \\
\vphi_t-\vep\Del\mu-\div\left(\vphi\bigl[\nab p+\gamma\vphi\nab \mu\bigr]\right) 
&=0 &&\qquad\mbox{in } \Ome_T. \label{eq2.2}
\end{alignat}
Then the PDE system to be 
studied and approximated in this paper consists of equations
\eqref{eq2.1}, \eqref{eq2.2}, and \eqref{eq1.8}, which are
complimented with the boundary and initial conditions 
\eqref{eq1.9}--\eqref{eq1.11}.

Motivated by the energy law \eqref{eq1.12}, we define the following 
weak formulation and solutions to the initial-boundary value problem. 

\begin{definition}\label{weak_form}
Let $\vphi_0^\vep\in H^1(\Ome)$.
A triple $(p,\mu,\vphi)$ is called a weak solution of problem
\eqref{eq2.1}, \eqref{eq2.2}, and \eqref{eq1.8}--\eqref{eq1.11} if 
it satisfies  
	\begin{eqnarray}
p &\in& L^{\frac{4}{3}}\left((0,T);H^1(\Ome)\cap L^2_0(\Ome)\right),
	\\
 \mu &\in& L^2\bigl((0,T);H^1(\Ome)\bigr),
	\\
\nab p+\gamma \vphi\nab \mu &\in& L^2\bigl((0,T); \bL^2(\Ome)\bigr), 
	\\
\vphi &\in& L^\infty\left((0,T);H^1(\Ome)\right)\cap 
L^4\left((0,T);L^\infty(\Ome)\right),
	\\
\vphi_t &\in&  L^{\frac{4}{3}}\bigl((0,T); (H^1(\Ome))^*\bigr), 
	\end{eqnarray}
and there hold for almost all $t\in (0,T)$ 
\begin{alignat}{2}
\bigl(\nab p+\gamma\vphi \nab\mu,\nab q\bigr) &=0 
&&\qquad \forall q\in H^1(\Ome), \label{eq2.3} \\
\dual{\vphi_t}{\nu} +\vep\bigl(\nab \mu,\nab\nu \bigr) 
+\prodt{\vphi[\nab p+\gamma\vphi\nab \mu]}{\nab \nu} &= 0  
&&\qquad \forall \nu\in H^1(\Ome), \label{eq2.4} \\
\prodt{\mu}{\psi}-\vep \prodt{\nab\vphi}{\nab\psi}
-\frac{1}{\vep} \prodt{f(\vphi)}{\psi} &= 0 
&&\qquad \forall \psi\in H^1(\Ome), \label{eq2.5}
\end{alignat}
with the initial condition $\vphi(0)=\vphi_0^\vep$.  
\end{definition}

\begin{remark}\label{rem2.1}
The reason for not breaking the sum $\nab p+\gamma \vphi\nab\mu$
is that it has  better regularity/integrability than each of its
two terms. By the Aubin-Lions lemma (cf. \cite{simon85}),
the regularity on $\vphi$ ensures that $\vphi\in C^0([0,T]; L^2(\Ome))$.
Hence, the initial condition $\vphi(0)=\vphi_0^\vep$ makes sense.
\end{remark}
\begin{remark}
The regularities imposed on the solution $(p,\mu,\vphi)$ in the definition 
are {\em not} the ``minimum" required to make all terms in 
\eqref{eq2.3}--\eqref{eq2.5} be well defined. These regularities
are imposed because they are suggested by the energy law \eqref{eq1.12}.  
Moreover, the product space of the spaces used in the definition 
for $(p,\mu,\vphi)$ is indeed the energy space associated with the 
DCH system. 
\end{remark}

As we mentioned in Sec.~\ref{sec-1}, a key feature of 
the Darcy-Cahn-Hilliard (DCH) system is that it is a dissipative system in 
the sense that it satisfies an energy law, namely, \eqref{eq1.12}.
Below we demonstrate that this is indeed the case for weak solutions of 
the DCH system.   
 
\begin{lemma}\label{lem2.2}
Suppose that $\vphi_0^\vep\in H^1(\Ome)$ and $\Ome\subset\mathbb{R}^d$ $(d=2,3)$ 
is a Lipschitz domain, 
let $(p,\mu,\vphi)$ be a weak solution defined by \eqref{eq2.3}--\eqref{eq2.5}. 
In addition, suppose that the initial value $\vphi_0^\vep$ satisfies  
$\Jc_\vep\left(\vphi_0^\vep\right)\leq C_0$ for some $\vep$-independent  
constant $C_0$, \emph{i.e.}, the initial energy is uniformly bounded in $\vep$. Set 
$\bu:=-\left(\nab p+\gamma\vphi\nab\mu\right)$. Then, for almost all $t\in(0,T)$, 
	\begin{equation}
\int_\Ome \vphi(x,t)\,dx = \int_\Ome \vphi_0^\vep(x)\, dx, 
	\label{eq2.6} 
	\end{equation}
and for all $t\in (0,T)$ and some $\vep$-independent constant $C=C(E(0))>0$
	\begin{align}
&E(t) + \int_0^t \Bigl[\vep\norm{\nab\mu(s)}{L^2}^2 
+ \frac{1}{\gamma}\norm{\bu(s)}{L^2}^2 \Bigr]\, ds = E(0)<\infty, 
	\label{eq2.7} 
	\\
&\max_{0\le s\le t} \norm{\vphi(s)}{H^1}^2 \le \frac{C}{\vep},
	\label{LinfH1-phi}
	\\
&\int_0^t\norm{\mu(s)}{H^1}^2\, ds \le \frac{(T+1)C}{\vep^5} ,
	\label{L2H1-mu}
	\\
&\int_0^t\norm{\mu(s)-\vep^{-1}f\left(\vphi(s)\right)}{L^2}^2\, ds \le \frac{(T+1)C}{\vep^5} ,
	\label{L2L2-g}
	\\
&\int_0^t \norm{\vphi_t(s)+ \bu(s)\cdot\nab\vphi(s)}{\left(H^1\right)^*}^2\, ds \leq C\vep, 
	\label{eq2.7a} 
	\\
&\int_0^t \norm{\vphi_t(s)}{(W^{1,3})^*}^2\, ds \leq \frac{C}{\vep},  \label{eq2.7b}\\
&\max_{0\le s\le t} \norm{ |\vphi(s)|-1 }{L^2}^2 \leq C\vep, \label{eq2.7c}
\end{align}
where  $E(t):=\Jc_\vep\left(\vphi(t)\right)$ and $\Jc_\vep(\, \cdot\, )$ 
is defined in \eqref{energy}.
\end{lemma}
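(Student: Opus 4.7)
The plan is to derive mass conservation and the energy identity first, then extract the remaining bounds as algebraic consequences. Mass conservation \eqref{eq2.6} follows immediately by testing \eqref{eq2.4} with $\nu\equiv 1$: the elliptic and convective terms vanish, so $\dual{\vphi_t}{1}=0$ for a.e.\ $t$, and integrating in time (with $\vphi\in C^0([0,T];L^2(\Ome))$ from Remark~\ref{rem2.1}) gives \eqref{eq2.6}. For the energy identity \eqref{eq2.7}, first test \eqref{eq2.3} with $q=p$; since $\bu=-\nab p-\gamma\vphi\nab\mu$ this yields $(\bu,\nab p)=0$, whence
\[
\gamma(\vphi\bu,\nab\mu)=(\bu,\gamma\vphi\nab\mu)=(\bu,-\nab p-\bu)=-\norm{\bu}{L^2}^2.
\]
Next, test \eqref{eq2.4} with $\nu=\mu$; after substituting $\vphi[\nab p+\gamma\vphi\nab\mu]=-\vphi\bu$ and using the previous identity, one obtains
\[
\dual{\vphi_t}{\mu}+\vep\norm{\nab\mu}{L^2}^2+\tfrac{1}{\gamma}\norm{\bu}{L^2}^2=0.
\]
The main obstacle is justifying the chain rule $\dual{\vphi_t}{\mu}=\tfrac{d}{dt}\Jc_\vep(\vphi)$, which cannot be obtained by simply testing \eqref{eq2.5} with $\psi=\vphi_t$ because $\vphi_t\in L^{4/3}(0,T;(H^1(\Ome))^*)$ is not an admissible $H^1$ test function. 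I would justify it by a Steklov-in-time regularization of $\vphi$: write \eqref{eq2.5} for the averaged $\vphi_\delta$, test with $(\vphi_\delta)_t\in L^2(0,T;H^1(\Ome))$, then pass $\delta\to 0$ using the stated regularity of $\vphi$ and $\mu$. Integrating the resulting pointwise identity over $(0,t)$ yields \eqref{eq2.7}.

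From \eqref{eq2.7} the bounds \eqref{LinfH1-phi} and \eqref{eq2.7c} follow from $E(t)\le E(0)\le C$: the gradient contribution gives $\norm{\nab\vphi(t)}{L^2}^2\le 2C/\vep$, while $\int_\Ome F(\vphi(t))\,dx\le C\vep$ combined with the pointwise inequality
\[
(|\vphi|-1)^2 \le (|\vphi|-1)^2(|\vphi|+1)^2 = (\vphi^2-1)^2 = 4F(\vphi)
\]
produces \eqref{eq2.7c}; the triangle inequality then gives $\norm{\vphi(t)}{L^2}^2\le C$, completing \eqref{LinfH1-phi}. For \eqref{L2H1-mu}, test \eqref{eq2.5} with $\psi\equiv 1$ to obtain $\overline{\mu}(t)=\vep^{-1}|\Ome|^{-1}\int_\Ome f(\vphi(t))\,dx$; the Sobolev embedding $H^1(\Ome)\hookrightarrow L^6(\Ome)$, valid for $d\le 3$, controls $\int_\Ome|f(\vphi)|\le C(\norm{\vphi}{L^3}^3+\norm{\vphi}{L^1})\le C\vep^{-3/2}$, so $|\overline{\mu}|\le C\vep^{-5/2}$. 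Combining this with Poincar\'e's inequality and $\int_0^t\vep\norm{\nab\mu}{L^2}^2\,ds\le C$ from \eqref{eq2.7} yields $\int_0^t\norm{\mu}{H^1}^2\,ds\le C\vep^{-1}+CT\vep^{-5}$, which is \eqref{L2H1-mu}. The same $L^6$ argument bounds $\norm{\vep^{-1}f(\vphi)}{L^2}^2\le C\vep^{-5}$, and the triangle inequality then produces \eqref{L2L2-g}.

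Finally, rewrite \eqref{eq2.4} as $\dual{\vphi_t}{\nu}-(\vphi\bu,\nab\nu)=-\vep(\nab\mu,\nab\nu)$. Because \eqref{eq2.3} encodes $\div\bu=0$ together with $\bu\cdot\bn=0$ on $\p\Ome$ in the distributional sense, the left-hand side equals $\dual{\vphi_t+\bu\cdot\nab\vphi}{\nu}$, so
\[
\norm{\vphi_t+\bu\cdot\nab\vphi}{(H^1)^*}\le\vep\norm{\nab\mu}{L^2},
\]
which, squared and integrated in time using \eqref{eq2.7}, gives \eqref{eq2.7a}. For \eqref{eq2.7b}, estimate the two right-hand terms on $\nu\in W^{1,3}(\Ome)$: $|\vep(\nab\mu,\nab\nu)|\le C\vep\norm{\nab\mu}{L^2}\norm{\nu}{W^{1,3}}$ by H\"older, and $|(\vphi\bu,\nab\nu)|\le\norm{\vphi}{L^6}\norm{\bu}{L^2}\norm{\nab\nu}{L^3}\le C\vep^{-1/2}\norm{\bu}{L^2}\norm{\nu}{W^{1,3}}$ via $H^1\hookrightarrow L^6$. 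Squaring and integrating in time, the two contributions produce a bound of $C\vep+C/\vep\le C/\vep$, proving \eqref{eq2.7b}.
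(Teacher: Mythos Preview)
Your proposal is correct and follows essentially the same route as the paper: mass conservation from $\nu=1$, the energy identity by testing the three equations with $p$, $\mu$, and (formally) $-\vphi_t$ and then justifying the last step via a Steklov-in-time regularization, and the remaining bounds as algebraic consequences using Sobolev embedding and Poincar\'e. The paper differs only in that it applies the Steklov average simultaneously to all three equations \eqref{eq2.3}--\eqref{eq2.5} and explicitly tracks a commutator remainder $\mathcal{R}^\delta(t)$ (arising because $(f(\vphi))^\delta\neq f(\vphi^\delta)$ and $(\vphi\nab\mu)^\delta\neq\vphi\nab\mu^\delta$), whereas you regularize only \eqref{eq2.5}; both variants work, but you should be aware that passing to the limit in the nonlinear term $\vep^{-1}\prodt{(f(\vphi))^\delta}{\vphi^\delta_t}$ requires exactly the same commutator control.
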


\begin{proof}
\eqref{eq2.6} follows trivially from setting $\nu=1$ in \eqref{eq2.4}.
To prove \eqref{eq2.7}, we consider two cases separately. First, suppose that 
$\vphi_t\in L^2\left((0,T); H^1(\Ome)\right)$.  
Then setting $q=\frac{p}{\gamma}$ in \eqref{eq2.3},
$\nu=\mu$ in \eqref{eq2.4} and $\psi=-\vphi_t$ in \eqref{eq2.5} 
(note that by the assumption, $-\vphi_t$ is a valid test function), and
adding the resulting equations we get
\[
\frac{d}{dt} \Bigl[ \vep\norm{\nab\vphi}{L^2}^2 
+\frac{1}{\vep} \bigl(F(\vphi), 1\bigr) \Bigr]+\vep\norm{\nab\mu}{L^2}^2 
+ \frac{1}{\gamma}\norm{\nab p +\gamma\vphi\nab\mu}{L^2}^2 =0 .
\]
Integrating over the interval $(0,t)$ yields \eqref{eq2.7}.

For the general case 
$\vphi_t\in L^{\frac{4}{3}}\big((0,T);(H^1(\Ome))^*\bigr)$, 
and we note that $\psi=-\vphi_t$ is not a valid test function 
in \eqref{eq2.5}. However, this technical difficulty can be overcome 
by using a Steklov average technique.
For $t\in (0,T)$, let $\delta>0$ be a small number. Define the 
Steklov average $\vphi^\delta$ of $\vphi$ by (cf. \cite[Ch.~2]{LSU})
\begin{alignat*}{2}
\vphi^\delta(\, \cdot\, , t) :=S_{+}^\delta (\vphi) (\, \cdot\, ,t) &:=
\frac{1}{\delta} \int_t^{t+\delta} \vphi(\, \cdot\, ,s)\,ds &\qquad\forall t
\in (0,T),
\end{alignat*}
Trivially, for small enough $\delta$,
\[
\vphi^\delta_t(\,\cdot\, , t) :=\bigl(\vphi^\delta(\, \cdot,\,  t)\bigr)_t
= \frac{\vphi(\, \cdot\, ,t+\delta)-\vphi(\, \cdot\, ,t)}{\delta}.
\]
Hence, $\vphi^\delta_t(\, \cdot\, ,t) \in H^1(\Ome)$ for almost every
$t\in (0,T-\delta)$.  It is well known that (cf. \cite[Ch.~2]{LSU})
\begin{equation}\label{e2.9b}
S_{+}^\delta (\vphi_t)=\left(S_{+}^\delta (\vphi)\right)_t
=\vphi^\delta_t.
\end{equation}
Note that the derivative on the left-hand side of the above identity
is understood as a distributional derivative, while the derivative
on the right-hand side is understood in the classical sense.

Now applying $S_+^\delta$ to both sides of \eqref{eq2.3}--\eqref{eq2.5}
after replacing $t$ by $s$ yields
\begin{alignat}{2}
\bigl(\nab p^\delta+\gamma (\vphi \nab\mu)^\delta,\nab q\bigr) &=0 
&&\qquad \forall q\in H^1(\Ome), \label{eq2.3x} \\
(\vphi^\delta_t,\nu) +\vep\bigl(\nab \mu^\delta,\nab\nu \bigr) 
+\prodt{(\vphi[\nab p+\gamma\vphi\nab \mu])^\delta}{\nab \nu} &= 0  
&&\qquad \forall \nu\in H^1(\Ome), \label{eq2.4x} \\
\prodt{\mu^\delta}{\psi}-\vep \prodt{\nab\vphi^\delta}{\nab\psi}
-\frac{1}{\vep} \prodt{(f(\vphi))^\delta}{\psi} &= 0 
&&\qquad \forall \psi\in H^1(\Ome). \label{eq2.5x}
\end{alignat}
Setting $\psi=-\vphi^\delta_t$ in \eqref{eq2.5x}, $\nu=\mu^\delta$ in
\eqref{eq2.4x} and $q=\frac{p^\delta}{\gamma}$ in \eqref{eq2.3x} and 
adding the resulting equations we get
\begin{align}\label{eq2.5y}
&\frac{d}{dt} \mathcal{J}_\vep(\vphi^\delta)
+\vep\norm{\nab\mu^\delta}{L^2}^2 
+\frac{1}{\gamma}\norm{\nab p^\delta +\gamma(\vphi\nab\mu)^\delta}{L^2}^2 
=\mathcal{R}^\delta(t),
\end{align}
where
\begin{align*}
\mathcal{R}^\delta(t)&:=\frac{1}{\vep} \prodt{f(\vphi^\delta)
-(f(\vphi))^\delta}{\vphi^\delta_t} 
+\prodt{ \nab p^\delta +\gamma(\vphi\nab\mu)^\delta }
{\vphi\nab\mu^\delta-(\vphi\nab\mu)^\delta }\\
&\qquad
+\prodt{ \vphi[\nab p^\delta +\gamma(\vphi\nab\mu)^\delta] 
-(\vphi[\nab p +\gamma\vphi\nab\mu])^\delta}{\nab \mu^\delta}. 
\end{align*}

Integrating \eqref{eq2.5y} in $t$ gives 
\begin{align*}
\mathcal{J}_\vep(\vphi^\delta(s)) 
&+\int_0^s\Bigl(\vep\norm{\nab\mu^\delta(t)}{L^2}^2 
+\frac{1}{\gamma}\norm{\nab p^\delta(t)
+\gamma(\vphi\nab\mu)^\delta(t)}{L^2}^2 \Bigr)\,dt  \\
&= \mathcal{J}_\vep(\vphi^\delta(0))+\int_0^s \mathcal{R}^\delta(t)\, dt
\qquad\forall s\in (0,T).
\end{align*}
Note that, for each fixed $\vep>0$, $f(\vphi)\in L^2((0,T);H^1(\Ome))$,  since 
$\vphi\in L^4((0,T);L^\infty(\Ome))$,  and $f(\, \cdot\, )$ is continuous.  
Sending $\delta\to 0^+$ and using properties of the Steklov 
average $S^\delta_+$ (cf. \cite[Ch.~2]{LSU}) we get
\begin{align*}
&\lim_{\delta\to 0^+} \int_0^s \mathcal{R}^\delta(t)\, dt =0,\\
\mathcal{J}_\vep(\vphi(s)) +\int_0^s\Bigl(\vep\norm{\nab\mu(t)}{L^2}^2 
&+\frac{1}{\gamma}\norm{\nab p(t)
+\gamma(\vphi(t)\nab\mu(t))}{L^2}^2 \Bigr)\,dt = \mathcal{J}_\vep(\vphi(0)).
\end{align*}
Hence, we recover \eqref{eq2.7}. Observe that from \eqref{eq2.7} we can 
conclude that $E(t)$ is an absolutely continuous function of time.

Using \eqref{eq2.7} and the estimate
	\begin{equation}
\left(F(\vphi),1\right) \ge \frac{1}{2}\norm{\vphi}{L^2}^2 - \frac{3}{4}\left|\Omega\right| 
	\end{equation}
we discover that $\norm{\phi}{L^2}^2 \le C$, for all time and independent 
of $\vep$, and inequality~\eqref{LinfH1-phi} follows.	 Inequalities~\eqref{L2H1-mu} and \eqref{L2L2-g} follow straightforwardly from \eqref{eq2.5}, 
\eqref{LinfH1-phi}, and the Sobolev embedding 
$H^1(\Ome)\hookrightarrow L^6(\Ome)$ for $d=2,\, 3$.  Inequality 
\eqref{eq2.7a} 
is an immediate consequence of \eqref{eq2.4} and the 
fact that $\vep\norm{\nab \mu}{L^2\left(L^2\right)}< C <\infty$ from 
\eqref{eq2.7}. 
To show \eqref{eq2.7b}, by \eqref{eq2.4},  
the Sobolev embedding $H^1(\Ome)\hookrightarrow L^6(\Ome)$ for $d=2,\, 3$,
and \eqref{LinfH1-phi},
we get for any $\nu\in W^{1,3}(\Ome)$
\begin{align*}
\dual{\vphi_t}{\nu}
&=-\vep\bigl(\nab\mu,\nab\nu\bigr)+\prodt{\vphi\bu}{\nab \nu}
	\\
&\leq\vep\norm{\nab\mu}{L^2} \norm{\nab\nu}{L^2} 
+\norm{\vphi}{L^6} \norm{\bu}{L^2} \norm{\nab \nu}{L^3} 
	\\
&\leq C\bigl[\vep \|\nab\mu\|_{L^2} + \norm{\vphi}{H^1} \|\bu\|_{L^2}\bigr]
\, \|\nab\nu\|_{L^3}
	\\
&\leq C\left[\vep \|\nab\mu\|_{L^2} + \frac{1}{\sqrt{\vep}} \|\bu\|_{L^2}\right]
\, \|\nab\nu\|_{L^3} .
\end{align*}
The above inequality and \eqref{eq2.7} infer \eqref{eq2.7b}.  

Finally, \eqref{eq2.7c} is an immediate consequence of 
\eqref{eq2.7} and the inequality $(\vphi^2-1)^2\geq \bigl(|\vphi|-1\bigr)^2$. 
The proof is complete.  
\end{proof}

The next lemma shows that weak solutions have some additional
regularities.

\begin{lemma}\label{lem2.1}
Suppose that $\vphi_0^\vep\in H^1(\Ome)$ and $\Ome\subset\mathbb{R}^d$ 
$(d=2,3)$ is a Lipschitz domain. Let $(p,\mu,\vphi)$ be a weak solution 
defined by \eqref{eq2.3}--\eqref{eq2.5}. Then 
$\vphi\in L^2\left(\left(0,T\right);H^2\left(\Ome\right)\right)$. 
Moreover, if $\Ome$ is a convex polygonal or polyhedral domain, 
then $\vphi\in L^2\left(\left(0,T\right);H^3\left(\Ome\right)\right)$.
\end{lemma}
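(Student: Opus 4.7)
The plan is to read \eqref{eq2.5} as a Neumann problem for $\vphi$ at a.e.\ $t$ and then bootstrap using the bounds from Lemma~\ref{lem2.2}. Specifically, \eqref{eq2.5} is the weak form of
\begin{equation*}
-\vep\Delta\vphi = \mu - \frac{1}{\vep}f(\vphi) \quad\text{in } \Omega,\qquad \frac{\partial\vphi}{\partial\bn}=0 \quad\text{on } \partial\Omega.
\end{equation*}
First I would verify that, for a.e.\ $t$, the right-hand side lies in $L^2(\Omega)$ with $L^2(0,T;L^2)$ bound. By \eqref{L2H1-mu}, $\mu\in L^2((0,T);L^2(\Omega))$. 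Since $f(\vphi)=\vphi^3-\vphi$ and the embedding $H^1(\Omega)\hookrightarrow L^6(\Omega)$ holds for $d=2,3$, inequality \eqref{LinfH1-phi} gives $\vphi^3\in L^\infty((0,T);L^2(\Omega))$, hence $f(\vphi)\in L^\infty((0,T);L^2(\Omega))$. Thus $\mu-\vep^{-1}f(\vphi)\in L^2((0,T);L^2(\Omega))$.

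Next I would invoke elliptic regularity for the homogeneous Neumann Laplace problem. The standard estimate
\begin{equation*}
\norm{\vphi(t)}{H^2}\leq C\bigl(\norm{\vphi(t)}{L^2}+\norm{\Delta\vphi(t)}{L^2}\bigr)
\end{equation*}
together with the $L^2(L^2)$ bound on $\Delta\vphi$ obtained above and the $L^\infty(L^2)$ bound on $\vphi$ from \eqref{LinfH1-phi} yields, after squaring and integrating in $t$, the first conclusion $\vphi\in L^2((0,T);H^2(\Omega))$.

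For the second statement I would perform one more bootstrap: having now $\vphi\in L^2((0,T);H^2(\Omega))$, the embedding $H^2(\Omega)\hookrightarrow W^{1,6}(\Omega)$ (valid for $d\leq 3$) yields $\nabla\vphi\in L^2((0,T);L^6(\Omega))$. Combined with $\vphi\in L^\infty((0,T);L^6(\Omega))$, H\"older's inequality with exponents $(3,6)$ gives
\begin{equation*}
\norm{\nabla f(\vphi)}{L^2}=\norm{(3\vphi^2-1)\nabla\vphi}{L^2}\leq\bigl(3\norm{\vphi}{L^6}^2+C\bigr)\norm{\nabla\vphi}{L^6},
\end{equation*}
so $f(\vphi)\in L^2((0,T);H^1(\Omega))$. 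Since $\mu\in L^2((0,T);H^1(\Omega))$ by \eqref{L2H1-mu}, the right-hand side of the Neumann problem is in $L^2((0,T);H^1(\Omega))$. Applying the $H^3$ elliptic regularity estimate for the homogeneous Neumann Laplace problem on convex polygonal/polyhedral domains (a Grisvard-type result) then gives $\vphi\in L^2((0,T);H^3(\Omega))$.

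The step I expect to be the most delicate is the elliptic regularity itself: on a general Lipschitz domain, $L^2$ right-hand sides only produce $H^{3/2}$ solutions, so the first assertion really requires some additional smoothness of $\partial\Omega$ (e.g.\ $C^{1,1}$ or convex), which presumably is implicitly assumed; for the $H^3$ bound, one must carefully invoke the restricted-angle results for the Neumann Laplacian on corner/edge domains to ensure the solution picks up full third-order regularity. Aside from this technical point, the remaining calculations are routine embeddings and H\"older estimates combined with the $\vep$-dependent a priori bounds of Lemma~\ref{lem2.2}.
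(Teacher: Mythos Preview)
Your $H^2$ argument matches the paper's. For $H^3$, the paper takes a slightly different route: rather than invoking $H^3$ regularity for the Neumann Laplacian directly (which, as you correctly flag, is delicate on corner domains and is not a standard Grisvard statement), it recasts the problem as a biharmonic equation. Testing \eqref{eq2.5} with $\psi=\Delta\theta$ for smooth $\theta$ satisfying $\partial\theta/\partial\bn=0$ shows that $\vphi$ is a weak solution of
\[
\Delta^2\vphi=-\Delta g,\qquad \frac{\partial\vphi}{\partial\bn}=\frac{\partial\Delta\vphi}{\partial\bn}=0\quad\text{on }\partial\Ome,
\]
where $g:=\mu-\vep^{-1}f(\vphi)\in L^2((0,T);H^1(\Ome))$. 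The paper then invokes Blum--Rannacher \cite{blum}, which gives $H^3$ regularity for precisely this biharmonic boundary value problem on convex polygonal domains when the source lies in $H^{-1}$. Your direct route and the biharmonic reformulation are logically equivalent, but the latter has the practical advantage of landing on a named result in the literature; a standalone ``Grisvard-type'' $H^3$ Neumann estimate on convex polygons is not a standard citation and would itself have to be justified, essentially by the same argument. Your observation that full $H^2$ regularity on a merely Lipschitz domain is optimistic is well taken; the paper glosses over this point.
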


\begin{proof}
We begin by rewriting~\eqref{eq2.5} as
\begin{equation}
\label{eq2.5a}
\vep\bigl(\nab\vphi,\nab\psi\bigr) = \bigl(\mu-\vep^{-1}f(\vphi),\psi\bigr) 
\qquad \forall \psi\in H^1(\Ome).  
\end{equation}
Hence, $\vphi$ is a weak solution to a Poisson equation with homogeneous
Neumann boundary conditions and the right-hand side ``source" function
$g:=\mu-\vep^{-1}f(\vphi)$.  Since $\vphi\in L^4
\left(\left(0,T\right);L^\infty\left(\Ome\right)\right)$, then
$g\in L^2 \left(\left(0,T\right);H^1\left(\Ome \right)\right)$. 
By elliptic regularity theory (cf. \cite[Ch.~7]{GT}) we conclude 
that $\vphi\in L^2\left(\left(0,T\right);H^2\left(\Ome\right)\right)$ 
and $\frac{\p \vphi}{\p \bn}=0$ on $\p\Ome$ in the distributional sense.

Introduce the function space 
\begin{displaymath}
U:=\Bigl\{\theta\in H^2(\Ome);\,\frac{\p\theta}{\p \bn}=0 
\,\,\mbox{on } \p\Ome\Bigr\}.
\end{displaymath}
For any $\theta\in U\cap C^\infty(\Ome)$, setting $\psi=\Del \theta$ 
in \eqref{eq2.5a}, integrating by parts and using a density argument we get
\begin{displaymath}
\vep\bigl(\Del\vphi,\Del\theta\bigr) 
=\left(\nab\left[\mu-\vep^{-1}f(\vphi)\right],\nab\theta\right) 
\qquad \forall\, \theta\in U,
\end{displaymath}
which implies that $\vphi\in H^2(\Ome)$ with $(\vphi,1)=\mbox{const}$
is the unique weak solution to the following biharmonic problem:
\begin{alignat*}{2}
\Del^2 \vphi &= -\Del g &&\qquad\mbox{in }\Ome, \\
\frac{\p \vphi}{\p \bn} =\frac{\p \Delta\vphi}{\p \bn}
&=0 &&\qquad\mbox{on }\p\Ome. 
\end{alignat*}
Since $-\Del g \in L^2\left(\left(0,T\right);H^{-1}\left(\Ome\right)\right)$, 
it follows from a well-known regularity result of \cite{blum} 
that $\vphi\in L^2\left(\left(0,T\right);
H^3\left(\Ome\right)\right)$ when $\Ome$ is a convex polygonal domain. 
The proof is complete.
\end{proof}

We conclude this section by establishing the following uniqueness 
theorem for weak solutions of problem \eqref{eq2.1}, \eqref{eq2.2},  
and \eqref{eq1.8}--\eqref{eq1.11} defined in Definition~\ref{weak_form}.

\begin{theorem}\label{thm2.1}
Suppose that $\vphi_0^\vep\in H^1(\Ome)$ with 
$\Jc_\vep\left(\vphi_0^\vep\right)\leq C_0$ for some $\vep$-independent 
constant $C_0$ and $\Ome\subset\mathbb{R}^d$ $(d=2,3)$ is a 
Lipschitz domain. We say that a weak solution $(p,\mu,\vphi)$ 
belongs to the function space $\mathcal{F}$ if it satisfies the additional 
regularity conditions $\nab p+\gamma\vphi \nab \mu \in L^{\frac{12}{6-d}}((0,T); 
L^2(\Ome))$, $\mu\in L^{\frac{12}{6-d}}((0,T); H^1(\Ome))$, and
$\vphi_t\in L^2((0,T); (H^1(\Ome))^*)$.
Then weak solutions of \eqref{eq2.3}--\eqref{eq2.5} in the function class 
$\mathcal{F}$ are unique.
\end{theorem}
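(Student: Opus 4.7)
The plan is to use the standard $H^{-1}$-type uniqueness argument for Cahn--Hilliard equations, adapted to handle the Darcy coupling and the convective term. Let $(p_i,\mu_i,\vphi_i)$, $i=1,2$, be two weak solutions in $\mathcal{F}$ sharing the initial datum $\vphi_0^\vep$, and set $\hat{p}=p_1-p_2$, $\hat{\mu}=\mu_1-\mu_2$, $\hat{\vphi}=\vphi_1-\vphi_2$, and $\hat{\bu}=\bu_1-\bu_2$ with $\bu_i=-(\nab p_i+\gamma\vphi_i\nab\mu_i)$. By the mass conservation identity \eqref{eq2.6} applied to each solution, $\hat{\vphi}(t)\in L^2_0(\Ome)$ for a.e. $t$, so that the inverse Neumann Laplacian $\theta:=(-\Del)^{-1}\hat{\vphi}\in H^1(\Ome)\cap L^2_0(\Ome)$ is well defined, and I will work with the negative norm $\norm{v}{-1}^2:=\norm{\nab(-\Del)^{-1}v}{L^2}^2$ for zero-mean $v$.

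First I subtract the weak formulations \eqref{eq2.3}--\eqref{eq2.5} for the two solutions. In the difference of \eqref{eq2.4} I test with $\nu=\theta$; the hypothesis $\vphi_t\in L^2((0,T);(H^1)^*)$ in $\mathcal{F}$ makes $\dual{\hat{\vphi}_t}{\theta}=\tfrac12\tfrac{d}{dt}\norm{\hat{\vphi}}{-1}^2$ rigorous, and the $\vep$-term reduces to $\vep\prodt{\hat{\mu}}{\hat{\vphi}}$. In the difference of \eqref{eq2.5} I test with $\psi=\hat{\vphi}$ to obtain $\prodt{\hat{\mu}}{\hat{\vphi}}=\vep\norm{\nab\hat{\vphi}}{L^2}^2+\tfrac{1}{\vep}\prodt{f(\vphi_1)-f(\vphi_2)}{\hat{\vphi}}$. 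Using $f(\vphi_1)-f(\vphi_2)=(\vphi_1^2+\vphi_1\vphi_2+\vphi_2^2-1)\hat{\vphi}$ to control the lower-order part by $-\norm{\hat{\vphi}}{L^2}^2$, I arrive at
\[
\tfrac12\tfrac{d}{dt}\norm{\hat{\vphi}}{-1}^2+\vep^2\norm{\nab\hat{\vphi}}{L^2}^2\leq \norm{\hat{\vphi}}{L^2}^2+|R(t)|,
\]
where $R(t):=-\prodt{\vphi_1\bu_1-\vphi_2\bu_2}{\nab\theta}$ collects the convective contributions.

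The hard part is to control $R(t)$, which I split as $R=R_1+R_2$ with $R_1=-\prodt{\hat{\vphi}\bu_1}{\nab\theta}$ and $R_2=-\prodt{\vphi_2\hat{\bu}}{\nab\theta}$. For $R_1$, I use H\"older together with the elliptic-regularity bound $\norm{\nab\theta}{L^r}\lesssim\norm{\hat{\vphi}}{L^2}^{1-\alpha}\norm{\hat{\vphi}}{-1}^{\alpha}$ (via interpolation between $H^2$ and $H^1$) and the integrability $\bu_1\in L^{12/(6-d)}_tL^2_x$ from $\mathcal{F}$ to produce $|R_1|\leq\delta\norm{\nab\hat{\vphi}}{L^2}^2+C(1+\norm{\bu_1}{L^2}^{12/(6-d)})\norm{\hat{\vphi}}{-1}^2$. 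For $R_2$, I first test the difference of \eqref{eq2.3} with $q=\hat{p}$ to get $\norm{\hat{\bu}}{L^2}\leq C(\norm{\hat{\vphi}\nab\mu_1}{L^2}+\norm{\vphi_2\nab\hat{\mu}}{L^2})$; combined with $\mu_1\in L^{12/(6-d)}_tH^1_x$, $\vphi_2\in L^\infty_tH^1_x$ from Lemma~\ref{lem2.2}, and the Sobolev embedding $H^1\hookrightarrow L^6$ for $d\leq 3$, this lets me further split $R_2$ into pieces absorbed by $\delta\norm{\nab\hat{\vphi}}{L^2}^2+\delta\vep\norm{\nab\hat{\mu}}{L^2}^2$ plus integrable-in-time multipliers of $\norm{\hat{\vphi}}{-1}^2$, after going back to the $\hat{\mu}$ equation to express $\nab\hat{\mu}$ via $\nab\hat{\vphi}$ and $f(\vphi_1)-f(\vphi_2)$.

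Choosing $\delta$ small enough to absorb the $\norm{\nab\hat{\vphi}}{L^2}^2$ terms on the left, and using the Lions--Magenes interpolation $\norm{\hat{\vphi}}{L^2}^2\leq C\norm{\hat{\vphi}}{-1}\norm{\nab\hat{\vphi}}{L^2}$ to convert the remaining $L^2$ norm into an $H^{-1}$ norm (with one more absorption), I obtain $\tfrac{d}{dt}\norm{\hat{\vphi}}{-1}^2\leq \alpha(t)\norm{\hat{\vphi}}{-1}^2$ with $\alpha\in L^1(0,T)$, the latter guaranteed precisely by the extra time-integrability built into $\mathcal{F}$. Gr\"onwall and $\hat{\vphi}(0)=0$ then give $\hat{\vphi}\equiv 0$; inserting this back into the subtracted \eqref{eq2.5} yields $\hat{\mu}\equiv 0$, and then \eqref{eq2.3} with $q=\hat{p}$ together with the normalization $\hat{p}\in L^2_0(\Ome)$ gives $\hat{p}\equiv 0$. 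The principal obstacle is the $R_2$ estimate: the exponent $12/(6-d)$ in the definition of $\mathcal{F}$ must be matched exactly by repeated H\"older in both space and time so that every constant-of-integration multiplier in the Gr\"onwall step belongs to $L^1(0,T)$.
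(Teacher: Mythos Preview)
Your strategy is the familiar $H^{-1}$ uniqueness argument for Cahn--Hilliard equations, which is quite different from the paper's approach; the paper works instead with a combined $L^2$/$H^1$ energy for $\hat\vphi$ together with control of $\|\Delta\hat\vphi\|_{L^2}^2$, $\|\nab\hat\mu\|_{L^2}^2$ and $\|\hat\bu\|_{L^2}^2$ obtained by testing the difference equations with $\hat\vphi$, $\hat\mu$, $\Delta\hat\vphi$, $-\hat\vphi_t$, and $\hat p$. Your route would be more economical if it closed, but there is a genuine gap in the treatment of $R_2$.

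The issue is this: the $H^{-1}$ identity you derive gives on the left only
\[
\tfrac12\tfrac{d}{dt}\|\hat\vphi\|_{-1}^2+\vep^2\|\nab\hat\vphi\|_{L^2}^2,
\]
and in particular produces \emph{no} dissipative term of the form $\|\nab\hat\mu\|_{L^2}^2$ or $\|\hat\bu\|_{L^2}^2$. Yet in bounding $R_2=-(\vphi_2\hat\bu,\nab\theta)$ you write that the pieces are ``absorbed by $\delta\|\nab\hat\vphi\|_{L^2}^2+\delta\vep\|\nab\hat\mu\|_{L^2}^2$''. There is nothing on the left to absorb the second of these. The difficulty is structural: once you unpack $\hat\bu=-\nab\hat p-\gamma\hat\vphi\nab\mu_1-\gamma\vphi_2\nab\hat\mu$, the term $\vphi_2\nab\hat\mu$ cannot be avoided, and your proposed remedy ``express $\nab\hat\mu$ via $\nab\hat\vphi$ and $f(\vphi_1)-f(\vphi_2)$'' does not work, since from \eqref{eq2.5} one has $\hat\mu=-\vep\Delta\hat\vphi+\vep^{-1}g\hat\vphi$, so $\nab\hat\mu$ involves \emph{third} derivatives of $\hat\vphi$, not first. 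Any attempt to integrate by parts and trade $\nab\hat\mu$ for $\hat\mu$ likewise forces $\|\Delta\hat\vphi\|_{L^2}$ into the estimates, which is again beyond what your left-hand side controls.

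This is precisely why the paper supplements the low-order estimate with the ``natural'' energy identity (its Step~3): testing \eqref{eq2.101} with $\nu=\hat\mu$, \eqref{eq2.102} with $\psi=-\hat\vphi_t$, and \eqref{eq2.100} with $q=\hat p/\gamma$ yields $\frac{1}{\gamma}\|\hat\bu\|_{L^2}^2+\vep\|\nab\hat\mu\|_{L^2}^2$ on the left, which is exactly the missing dissipation. To make your argument work you would need to couple your $H^{-1}$ inequality with this additional energy estimate (and then also control $\|\Delta\hat\vphi\|_{L^2}^2$ via \eqref{eq2.102}, as the paper does in its Step~2). A secondary point: you invoke an elliptic regularity bound $\|\nab\theta\|_{L^r}\lesssim\|\hat\vphi\|_{L^2}$ for $r>2$, which for the Neumann problem on a merely Lipschitz domain is not automatic; this would also need justification.
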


\begin{proof}
Since the proof is long, we divide it into six steps.

{\em Step 1:}
Suppose $(p_i,\mu_i,\vphi_i)$, $i=1,\, 2$, are two weak solutions,
and define $\bu_i:=-\nab p_i-\gamma\vphi_i\nab \mu_i$, $i=1,\, 2$.
Let $p=p_1-p_2$, $\bu=\bu_1-\bu_2$,  $\mu=\mu_1-\mu_2$, and , $\vphi=\vphi_1-\vphi_2$. Subtracting the corresponding equations
of \eqref{eq2.3}--\eqref{eq2.5} satisfied by
$\left(p_1,\mu_1,\vphi_1\right)$ and $\left(p_2,\mu_2,\vphi_2\right)$
we get the following ``error" equations:
\begin{alignat}{2}
\bigl(\bu,\nab q\bigr) &=0 &&\qquad \forall q\in H^1(\Ome),\label{eq2.100} \\
\dual{\vphi_t}{\nu} +\vep \bigl(\nab \mu,\nab\nu \bigr)
-\prodt{\vphi_1\bu+\vphi\bu_2}{\nab \nu} &= 0
&&\qquad \forall \nu\in H^1(\Ome), \label{eq2.101} \\
\prodt{\mu}{\psi}-\vep \prodt{\nab\vphi}{\nab\psi}
-\frac{1}{\vep} \prodt{f(\vphi_1)-f(\vphi_2)}{\psi} &= 0
&&\qquad \forall \psi\in H^1(\Ome). \label{eq2.102}
\end{alignat}
We will frequently use the fact that $\int_\Ome \vphi(x,t)\,dx =0$, for 
almost all $t \in (0,T)$, which follows from \eqref{eq2.6}.

Setting $\nu=\vphi$ in \eqref{eq2.101} and  $\psi=\mu$ in \eqref{eq2.102},
adding the resulting equations, and using the fact that
$\left(\bu_2,\nab\left( \vphi^2\right)\right)=0
=\left(\bu,\nab (\vphi_1\vphi)\right)$ we get
\begin{align}\label{eq2.103}
\frac12 \frac{d}{dt}\norm{\vphi}{L^2}^2 + \norm{\mu}{L^2}^2  
=-\bigl(\bu\cdot\nab \vphi_1, \vphi\bigr) 
+\frac{1}{\vep} \bigl(g(\vphi_1,\vphi_2)\vphi, \mu\bigr),
\end{align}
where $g(\vphi_1,\vphi_2)=\vphi_1^2+\vphi_1\vphi_2+\vphi_2^2 -1$.

Using Schwarz inequality and the following Gagliardo-Nirenberg
inequality (cf. \cite{adams,elliott86})
\[
\|\vphi\|_{L^\infty} \leq C\|\Delta\vphi\|_{L^2}^{\frac{d}{4}} 
\|\vphi\|_{L^2}^{\frac{4-d}{4}}+C\|\vphi\|_{L^2} \qquad (d=2,3)
\]
in \eqref{eq2.103} we get
\begin{align*}
&\frac{d}{dt}\norm{\vphi}{L^2}^2 + 2\norm{\mu}{L^2}^2  
\leq 2 \|\bu\|_{L^2} \|\nab\vphi_1\|_{L^2} \|\vphi\|_{L^\infty} 
+\frac{2}{\vep}\|g(\vphi_1,\vphi_2)\|_{L^\infty}\norm{\vphi}{L^2}\norm{\mu}{L^2}\\ 
&\quad
\leq \frac{\vep}{4\gamma} \|\bu\|_{L^2}^2 +\frac{C}{\vep}\|\nab\vphi_1\|_{L^2}^2 
\|\Delta\vphi\|_{L^2}^{\frac{d}{2}} \|\vphi\|_{L^2}^{\frac{4-d}{2}}
+ \frac{2}{\vep}\|g(\vphi_1,\vphi_2)\|_{L^\infty}\norm{\vphi}{L^2}\norm{\mu}{L^2}\\
&\quad
\leq \frac{\vep}{4\gamma}\|\bu\|_{L^2}^2+\frac{\vep^2}{16}\|\Delta\vphi\|_{L^2}^2 
+C(\vep)\|\nab\vphi_1\|_{L^2}^{\frac{8}{4-d}}
\norm{\vphi}{L^2}^2 +\norm{\mu}{L^2}^2 \\
&\qquad
+\frac{1}{\vep^2}\|g(\vphi_1,\vphi_2)\|_{L^\infty}^2\norm{\vphi}{L^2}^2.
\end{align*}
Hence, it follows from \eqref{eq2.7} that
\begin{align} \label{eq2.103x}
\frac{d}{dt}\norm{\vphi}{L^2}^2 + \norm{\mu}{L^2}^2  
&\leq \frac{\vep}{4\gamma} \|\bu\|_{L^2}^2 
+ \frac{\vep^2}{16} \|\Delta\vphi\|_{L^2}^2 \\
&\hskip 0.6in
+C(\vep)\Bigl(1
+\|g(\vphi_1,\vphi_2)\|_{L^\infty}^2\Bigr) \norm{\vphi}{L^2}^2. 
\nonumber
\end{align}

\medskip
{\em Step 2:}
Setting $\psi=\Delta \vphi$ in \eqref{eq2.102} gives
\begin{align*}
\vep\|\Delta\vphi\|_{L^2}^2
&=-(\mu,\Delta\vphi)+\frac{1}{\vep}\bigl(g(\vphi_1,\vphi_2)\vphi,\Delta\vphi\bigr) \\
&\leq \frac{\vep}2 \|\Delta\vphi\|_{L^2}^2 +\frac{1}{\vep} \|\mu\|_{L^2}^2
+ \frac{1}{\vep^3} \|g(\vphi_1,\vphi_2)\|_{L^\infty}^2 \|\vphi\|_{L^2}^2.
\end{align*}
Hence
\begin{align}\label{eq2.105x}
\frac{\vep^2}4\|\Delta\vphi\|_{L^2}^2
&\leq \frac12 \|\mu\|_{L^2}^2
+ C(\vep) \norm{g(\vphi_1,\vphi_2)}{L^\infty}^2 \|\vphi\|_{L^2}^2.
\end{align}

\medskip
{\em Step 3:} First, on noting that
\begin{align*}
\bu &=\bu_1-\bu_2 
=-\nab p - \gamma \bigl( \vphi_1\nab \mu_1-\vphi_2\nab \mu_2 \bigr)
=-\nab p - \gamma \vphi_1 \nab\mu -\gamma \vphi \nab\mu_2 ,
\end{align*}
and using \eqref{eq2.100} with $q=p$ we obtain
\begin{align}\label{eq2.104}
\frac{1}{\gamma} \norm{\bu}{L^2}^2 &=-\frac{1}{\gamma}\bigl(\bu,\nab p\bigl)
- \bigl(\bu, \vphi_1 \nab\mu \bigr) 
- \bigl(\bu, \vphi \nab\mu_2 \bigr) \\
&= - \bigl(\vphi_1\bu, \nab\mu \bigr) 
- \bigl(\vphi\bu, \nab\mu_2 \bigr).   \nonumber
\end{align}

Second, setting $\nu=\mu$ in \eqref{eq2.101} yields
\begin{align}\label{eq2.104x}
\langle \vphi_t,\mu\rangle+\vep \|\nab \mu\|_{L^2}^2 
&=\bigl( \vphi_1\bu+ \vphi\bu_2, \nab \mu\bigr).
\end{align}

Third, applying the Steklov average operator $S_+^\delta$ to \eqref{eq2.102} 
(we use the same notion as in the proof of Lemma~\ref{lem2.2}) yields
\[
\prodt{\mu^\delta}{\psi}-\vep \prodt{\nab\vphi^\delta}{\nab\psi}
-\frac{1}{\vep} \prodt{(g(\vphi_1,\vphi_2)\vphi)^\delta}{\psi} = 0
\qquad \forall \psi\in H^1(\Ome). 
\]
Where $g(\vphi_1,\vphi_2)=\vphi_1^2+\vphi_1\vphi_2+\vphi_2^2-1$.
Setting $\psi=-\vphi^\delta_t$ in the above equation gives
\begin{align*}
-\prodt{\mu^\delta}{\vphi^\delta_t}
+\frac{\vep}2 \frac{d}{dt}\|\nab\vphi^\delta\|_{L^2}^2
+\frac{1}{\vep} \prodt{(g(\vphi_1,\vphi_2)\vphi)^\delta}{\vphi^\delta_t} = 0.
\end{align*}
Taking the limit $\delta\to 0^+$ and using the properties of
Steklov average operator (cf. \cite{LSU}) we get
\begin{align}\label{eq2.106}
-\dual{\vphi_t}{\mu} +\frac{\vep}2 \frac{d}{dt}\|\nab\vphi\|_{L^2}^2
+\frac{1}{\vep} \dual{\vphi_t}{g(\vphi_1,\vphi_2)\vphi} = 0.
\end{align}

Finally, adding \eqref{eq2.104}, \eqref{eq2.104x}, and \eqref{eq2.106},
using the fact that $\bigl(\bu_2, \nab(\vphi\mu)\bigr) 
=\bigl(\bu, \nab(\vphi\mu_2)\bigr) =0$ and Young's and 
Gagliardo-Nirenberg inequalities (cf. \cite{adams,elliott86}) we get
\begin{align*}
\frac{1}{\gamma}\norm{\bu}{L^2}^2 &+ \vep \|\nab \mu\|_{L^2}^2 
+\frac{\vep}2 \frac{d}{dt}\|\nab\vphi\|_{L^2}^2
+\frac{1}{\vep} \dual{\vphi_t}{g(\vphi_1,\vphi_2)\vphi}
	\\ 
&=\bigl( \vphi\bu_2, \nab \mu\bigr) - \bigl(\vphi\bu, \nab\mu_2 \bigr)
=-\bigl(\bu_2\cdot\nab \vphi,\mu\bigr)+\bigl(\bu\cdot\nab\vphi,\mu_2 \bigr)
	\nonumber
	\\
&\leq \|\bu_2\|_{L^2} \|\nab\vphi\|_{L^3} \|\mu\|_{L^6}
+ \|\bu\|_{L^2} \|\nab\vphi\|_{L^3} \|\mu_2\|_{L^6}  
	\nonumber
	\\
&\leq C \bigl( \|\bu_2\|_{L^2} \|\mu\|_{H^1}
+ \|\bu\|_{L^2}  \|\mu_2\|_{H^1}\bigr)\bigl(\|\Delta \vphi\|_{L^2}^{\frac{d}{6}}
\|\nab \vphi\|_{L^2}^{\frac6{6-d}} + \|\nab \vphi\|_{L^2}\bigr)
	\nonumber
	\\
&\leq \frac{1}{4\gamma} \norm{\bu}{L^2}^2 + \frac{\vep}2 \norm{\mu}{H^1}^2
+\frac{\vep}{16} \|\Delta \vphi\|_{L^2}^2 
	\nonumber
	\\
&\hskip 0.5in 
+C(\vep)\left(\norm{\bu_2}{L^2}^{\frac{12}{6-d}}
+\norm{\mu_2}{H^1}^{\frac{12}{6-d}}\right) \|\nab \vphi\|_{L^2}^2.
	\nonumber
\end{align*}
Hence
\begin{align}\label{eq2.111}
\frac{3}{4\gamma}\norm{\bu}{L^2}^2 &+ \frac{\vep}2 \|\nab \mu\|_{L^2}^2 
+\frac{\vep}2 \frac{d}{dt}\|\nab\vphi\|_{L^2}^2
+\frac{1}{\vep} \dual{\vphi_t}{g(\vphi_1,\vphi_2)\vphi}
	\\ 
&\leq \frac{\vep}{16} \|\Delta \vphi\|_{L^2}^2 +\frac{\vep}{2}\norm{\mu}{L^2}^2 
+C(\vep)\left(\norm{\bu_2}{L^2}^{\frac{12}{6-d}}
+\norm{\mu_2}{H^1}^{\frac{12}{6-d}}\right) \|\nab \vphi\|_{L^2}^2 .
	\nonumber
\end{align}

\medskip
{\em Step 4}:  
To control the last term on the left-hand side of \eqref{eq2.111}, we rewrite
\[
g(\vphi_1,\vphi_2)= \vphi_1^2 +\vphi_1\vphi_2+\vphi_2^2 -1
=(\vphi_1-\vphi_2)^2 +3 \vphi_1\vphi_2 -1
=(\vphi^2-1) + 3 \vphi_1\vphi_2.
\]
Thus
\begin{align} \label{eq2.112}
& \dual{\vphi_t}{g(\vphi_1,\vphi_2)\vphi}
=\frac{1}{2} \dual{(\vphi^2)_t}{\vphi^2-1} 
   +\frac{3}{2} \dual{(\vphi^2)_t}{\vphi_1\vphi_2} \\
&\hskip 0.1in
=\frac{d}{dt}\Bigl[ \frac{1}{4} \|\vphi\|_{L^4}^4 
  -\frac{1}{2} \|\vphi\|_{L^2}^2
  +\frac{3}{2} \prodt{\vphi^2}{\vphi_1\vphi_2} \Bigr]
  -\frac{3}{2} \dual{(\vphi_1\vphi_2)_t}{\vphi^2} \nonumber \\
&\hskip 0.1in
= \frac{d}{dt}\Bigl[ \frac{1}{4} \|\vphi\|_{L^4}^4 
  -\frac{1}{2} \|\vphi\|_{L^2}^2
  +\frac{3}{2} \prodt{\vphi^2}{\vphi_1\vphi_2} \Bigr]
  -\frac32 \Bigl[ \dual{\vphi_{1t}}{\vphi_2\vphi^2}
  + \dual{\vphi_{2t}}{\vphi_1\vphi^2} \Bigr].  \nonumber
\end{align}

To bound the last term on the right-hand side of \eqref{eq2.112},
we introduce the inverse Laplace operator 
$\Delta^{-1}:\,(H^1(\Ome))^*\to H^1(\Ome)$. For any $w\in (H^1(\Ome))^*$
with $\langle w, 1\rangle=0$,
let $\Delta^{-1}w\in H^1(\Ome)$ be the unique solution of the following problem:   
\begin{alignat}{2}\label{eq2.113a}
\prodt{\nab (\Delta^{-1}w)}{\nab \eta} &=-\dual{w}{\eta} 
&&\qquad \forall \eta\in H^1(\Ome),\\
\prodt{\Delta^{-1}w}{1} &=0. && \label{eq2.113b}
\end{alignat}
It is straightforward to show that, for all $w\in (H^1(\Ome))^*$
with $\langle w, 1\rangle=0$,
	\begin{equation}
\norm{w}{\left(H^1\right)^*}  =  \norm{\nabla\left(\Delta^{-1}w \right)}{L^2} .
	\end{equation}

Then for $j,k=1,2$ and $j\neq k$ using Sobolev inequality
(cf. \cite{adams}) we have
\begin{align}\label{eq2.114}
&\dual{\vphi_{jt}}{\vphi_k\vphi^2} 
=-\prodt{\nab (\Delta^{-1}\vphi_{jt})}{\vphi^2\nab\vphi_k
+2\vphi_k\vphi\nab\vphi} 
	\\ 
&\quad
\leq \|\nab (\Delta^{-1}\vphi_{jt})\|_{L^2} \|\vphi^2\nab\vphi_k
+2\vphi_k\vphi\nab\vphi\|_{L^2} 
	\nonumber
	\\
&\quad
\leq \norm{\vphi_{jt}}{\left(H^1\right)^*} 
\Bigl( \|\nab\vphi_k\|_{L^6} \|\vphi\|_{L^6}^2
+ 2\|\vphi_k\|_{L^6} \|\vphi\|_{L^6} \|\nab\vphi\|_{L^6} \Bigr) 
	\nonumber
	\\
&\quad
\leq C\norm{\vphi_{jt}}{\left(H^1\right)^*} 
\Bigl(\|\Delta\vphi_k\|_{L^2} \|\nab \vphi\|_{L^2}^2
+\norm{\vphi_k}{H^1} \|\nab\vphi\|_{L^2} \|\Delta\vphi\|_{L^2} \Bigr) 
	\nonumber
	\\
&\quad
\leq \frac{\vep^2}{48} \|\Delta\vphi\|_{L^2}^2 
+ C(\vep) \Bigl( \norm{\vphi_{jt}}{\left(H^1\right)^*}\|\Delta\vphi_k\|_{L^2}
+\norm{\vphi_{jt}}{\left(H^1\right)^*}^2 \norm{\vphi_k}{H^1}^2 \Bigr)
\|\nab \vphi\|_{L^2}^2. \nonumber
\end{align}

\medskip
{\em Step 5}:
Adding \eqref{eq2.103x},  \eqref{eq2.105x} and $\vep$ times of
\eqref{eq2.111}, and utilizing \eqref{eq2.112} and \eqref{eq2.114} we get
\begin{align} \label{eq2.118}
&\frac{d}{dt}\Bigl[ \frac12\norm{\vphi}{L^2}^2
+\frac{\vep^2}2\|\nab\vphi\|_{L^2}^2
+\frac{1}{4} \|\vphi\|_{L^4}^4
+ \frac{3}{2} \prodt{\vphi^2}{\vphi_1\vphi_2} \Bigr] \\
&\hskip 0.5in
+\frac12(1-\vep^2) \norm{\mu}{L^2}^2 + \frac{\vep}{2\gamma} \norm{\bu}{L^2}^2
+\frac{\vep^2}2 \|\nab \mu\|_{L^2}^2 
+\frac{\vep^2}{16}\|\Delta\vphi\|_{L^2}^2 
	\nonumber
	\\
&\hskip 0.1in
\leq C(\vep)\Bigl(1+\norm{g(\vphi_1,\vphi_2)}{L^\infty}^2\Bigr) \norm{\vphi}{L^2}^2
	\nonumber
	\\
&\hskip 0.5in
+C(\vep) \Biggl[ \left(\norm{\bu_2}{L^2}^{\frac{12}{6-d}} 
+ \norm{\mu_2}{H^1}^{\frac{12}{6-d}}\right) 
+\sum_{j,k=1\atop j\neq k}^2 \Bigl\{\norm{\vphi_{jt}}{\left(H^1\right)^*}
\|\Delta\vphi_k\|_{L^2} 
	\nonumber
	\\ 
&\hskip 0.5in
+\norm{\vphi_{jt}}{\left(H^1\right)^*}^2 \norm{\vphi_k}{H^1}^2\Bigr\} \Biggr]\, \|\nab \vphi\|_{L^2}^2
	\nonumber
	\\
&\hskip 0.1in
\leq  a(t) \left( \norm{\vphi}{L^2}^2+ \|\nab \vphi\|_{L^2}^2 \right), 
	\nonumber
\end{align}
where
\begin{align*}
a(t)&:=C(\vep)\Biggl[1
+\norm{g(\vphi_1,\vphi_2)}{L^\infty}^2
+ \norm{\bu_2}{L^2}^{\frac{12}{6-d}} + \norm{\mu_2}{H^1}^{\frac{12}{6-d}}
 \\
&\qquad
+\sum_{j,k=1\atop j\neq k}^2 \Bigl\{\norm{\vphi_{jt}}{\left(H^1\right)^*}
\|\Delta\vphi_k\|_{L^2} 
+\norm{\vphi_{jt}}{\left(H^1\right)^*}^2 \norm{\vphi_k}{H^1}^2\Bigr\} \Biggr] .
\end{align*}

Integrating \eqref{eq2.118} in $t$ over the interval $(0,t)$ we get
\begin{align} \label{eq2.119}
&\norm{\vphi(t)}{L^2}^2 +\vep^2\|\nab\vphi(t)\|_{L^2}^2
+\|\vphi(t)\|_{L^4}^4 + 6\prodt{\vphi^2(t)}{\vphi_1(t)\vphi_2(t)} \\
&\hskip 0.6in
\leq \int_0^t a(s)\left( \norm{\vphi(s)}{L^2}^2
+\|\nab \vphi(s)\|_{L^2}^2 \right)\, dt. \nonumber
\end{align}

\medskip
{\em Step 6}: Define 
\[
\tau:=\max\Bigl\{t\in [0,T];\,\, \prodt{\vphi^2(s)}{\vphi_1(s)\vphi_2(s)}\geq 0
\,\, \forall s\in [0,t] \Bigr\}.
\]
We now show that $\tau>0$. Since $\vphi_1(0)=\vphi_2(0)=\vphi_0^\vep$, by
continuity there exists $t_1>0$ such that for $j=1,2$
\[
\Bigl(|\vphi_0^\vep|^2 -\frac{1}{\sqrt{12}},\eta\Bigr) 
\leq \bigl(\vphi_1(t) \vphi_2(t),\eta\bigr) 
\leq \Bigl(|\vphi_0^\vep|^2 +\frac{1}{\sqrt{12}},\eta\Bigr)
\quad \forall t\in [0,t_1], \quad \forall\, 0\leq \eta \in L^\infty(\Omega).
\]
Consequently,
\begin{align}\label{eq2.120}
\prodt{\vphi^2(t)}{\vphi_1(t)\vphi_2(s)}
\geq \|\vphi \vphi_0^\vep\|_{L^2}^2 -\frac{1}{12} \|\vphi\|_{L^2}^2
\quad \forall t\in [0,t_1].
\end{align}

Substituting \eqref{eq2.120} into \eqref{eq2.119} yields
\begin{align} \label{eq2.121}
& \frac{1}{2}\norm{\vphi(t)}{L^2}^2 +\vep^2\|\nab\vphi(t)\|_{L^2}^2
+\|\vphi(t)\|_{L^4}^4 + 6\|\vphi \vphi_0^\vep\|_{L^2}^2 \\
&\hskip 0.6in
\leq \int_0^t a(s)\left( \norm{\vphi(s)}{L^2}^2
+\|\nab \vphi(s)\|_{L^2}^2 \right)\, ds\qquad \forall t\in [0,t_1]. \nonumber
\end{align}
By Gronwall's inequality we get 
\begin{align} \label{eq2.122}
\frac{1}{2}\norm{\vphi(t)}{L^2}^2 +\vep^2\|\nab\vphi(t)\|_{L^2}^2
&\leq \left[ \frac{1}{2}\norm{\vphi(0)}{L^2}^2 +\vep^2\|\nab\vphi(0)\|_{L^2}^2 \right]
\exp\left\{\int_0^T a(s)\, ds\right\} 
	\\
&=0 \qquad \forall t\in [0,t_1].  \nonumber 
\end{align}
Here we have used the fact that $\int_0^T a(s)\, ds < \infty$.
Thus, $\vphi(t)=0$ for $t\in [0,t_1]$. Therefore, $\tau\geq t_1>0$. 
In fact, the above proof also shows that $\vphi(t)=0$ for $t\in [0,\tau]$.

Suppose that $\tau< T$, 
by the definition of $\tau$ we have $\vphi(\tau)=0$, 
that is, $\vphi_1(\tau)=\vphi_2(\tau)$. 
Repeating the above Gronwall's inequality argument with $\tau$ in place 
of $t=0$, we conclude that there exists $t_2>\tau$ such that
$\vphi(t)=0$ for $t\in [0,t_2]$. Hence, 
$\prodt{\vphi^2(s)}{\vphi_1(s)\vphi_2(s)}= 0$ $\forall s\in [0,t_2]$.
By the definition of $\tau$ we must have $\tau\geq t_2$.  
So we get a contradiction. Therefore, $\tau=T$ and $\vphi(t)=0$, 
\emph{i.e.}, $\vphi_1(t)=\vphi_2(t)$, for $t\in [0,T]$. 
The proof is complete. 
\end{proof}

	\section{Fully discrete finite element method}
	\label{sec-3}

	\subsection{Formulation of the finite element method}
	\label{sec-3.1}
For simplicity we assume that $\Omega\subset \mathbb{R}^d$  ($d=2$, 3) is a  polygonal or polyhedral domain.  Let $J_\tau=\{t_m\}_{m=0}^M$ be a uniform partition of $[0,T]$ of  mesh size $\tau:=\frac{T}{M}$, and $d_t v^m:=(v^m-v^{m-1})/\tau$.  (This is for simplicity; we could also use a quasi-uniform partition of the time interface.) Let $\Tc_h$ be a quasi-uniform ``triangulation" of the domain $\Ome$ of 
mesh size $h\in (0,1)$ and $\overline{\Omega} = \bigcup_{K \in \mathcal{T}_h} 
\overline{K}$ ($K\in \mathcal{T}_h$ are tetrahedrons in the case $d=3$). 
For a nonnegative integer $r$, let $P_r(K)$ denote the space of polynomials 
of degree less than or equal to $r$ on $K$, and define
	\begin{displaymath}
S_h^r = \left\{v_h\in C^0\big(\, \overline{\Ome}\, \big) \ \middle| \ v_h|_K\in P_r(K)\  
\forall K\in \Tc_h \right\} .
	\end{displaymath}
For fixed positive  integers $r$ and $\ell$, we introduce the finite element spaces $V_h = S_h^r$ and $W_h = S_h^\ell$. Define $\mathring{V}_h := V_h\cap L^2_0(\Ome)$ and similarly for $\mathring{W}_h$.

We now are ready to introduce our fully discrete finite element 
method for problem \eqref{eq2.1}, \eqref{eq2.2}, \eqref{eq1.8}--\eqref{eq1.11}
based on the variational formulation \eqref{eq2.3}--\eqref{eq2.5}. Find
$\bigl\{ (\phm, \mu_h^m, \vphihm \bigr)\}_{m=1}^M 
\subset \mathring{W}_h \times V_h\times V_h$ such that  
\begin{alignat}{2} \label{eq3.2}
\bigl(\nab\phm+\gamma \vphi_h^{m-1}\nab\mu_h^m,\nab q_h\bigr) &=0 
&&\quad \forall  q_h\in W_h, \\
\prodt{d_t\vphihm}{\nu_h} +\vep\bigl(\nab \mu_h^m,\nab\nu_h \bigr) 
\hskip 1.3in & && \label{eq3.3} \\
+\prodt{\vphi_h^{m-1}[\nab \phm+\gamma\vphi_h^{m-1}\nab \mu_h^m]}{\nab \nu_h} 
&= 0 &&\quad\forall \nu_h\in V_h, \nonumber \\
\prodt{\mu_h^m}{\psi_h}-\vep \prodt{\nab\vphihm}{\nab\psi_h}
-\frac{1}{\vep} \prodt{f_h^m}{\psi_h} &=0 &&\quad\forall \psi_h\in V_h, 
\label{eq3.4} \\
\vphi_h^0 &=\vphi_{0h}, && \label{eq3.5} 
\end{alignat}
where $\vphi_{0h}\in V_h$, to be specified in the next section, is an 
approximation of $\vphi_0^\vep$, and 
\begin{equation}
f_h^m:=  \left(\vphihm\right)^3 - \vphi_h^{m-1}.  \label{eq3.6}
\end{equation}
We will prove that $(W_h,V_h,V_h)$ is a stable triple for our mixed finite element approximation.  The techniques we use are based on energy estimates and convexity analysis, rather than an inf-sup-type condition, which would be used for the analysis of linear biharmonic-type equations~\cite{ciarlet,elliott89,XA2,scholz78}.

\subsection{Well-posedness of the finite element method} \label{sec-3.2}
The goal of this subsection is to show that the fully discrete 
finite element scheme \eqref{eq3.2}--\eqref{eq3.6}  is  
uniquely solvable and energy stable for all $h,\tau,\vep>0$.  
To prove unconditional unique solvability, we shall show that 
at each time step the scheme 
can be reformulated as a minimization problem for a strictly convex and 
coercive functional. We begin by defining an inner product on the 
subspace $\mathring{V}_h$.

\begin{lemma} \label{lem3.1}
Define the bilinear form $a:\mathring{V}_h\times\mathring{V}_h\to \mathbb{R}$ via
\begin{equation} \label{weak-form-L-h}
a(\mu,\nu):= \tau \left(\mathcal{M}\left(\vep,\vphi_h^{m-1}\right)\nabla\mu
+\vphi_h^{m-1}\nabla p(\mu),\nabla \nu \right)_{L^2},
\end{equation}
where $\mathcal{M}(\vep,\vphi) = \vep+\gamma\vphi^2$ and $p(\mu)\in 
\mathring{W}_h$ solves
\begin{equation} \label{pressure-mu-h-weak}
\left(\nabla p(\mu) ,\nabla q \right)_{L^2} 
=-\gamma \left(\vphi_h^{m-1}\nabla\mu,\nabla q \right)_{L^2} 
\qquad \forall q\in \mathring{W}_h \ .
\end{equation}
Then $a(\, \cdot \, , \, \cdot \, )$ is an inner product on $\mathring{V}_h$.  
\end{lemma}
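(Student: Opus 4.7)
The plan is to verify the three axioms of an inner product (bilinearity, symmetry, positive definiteness) on $\mathring{V}_h$ by carefully exploiting the variational definition of the auxiliary pressure map $\mu\mapsto p(\mu)$. Bilinearity is essentially immediate: since \eqref{pressure-mu-h-weak} is linear in $\mu$, the Lax--Milgram-type existence gives a linear map $\mu\mapsto p(\mu)$, so $a(\cdot,\cdot)$ is bilinear by inspection.

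For symmetry, I would expand the right-hand side of \eqref{weak-form-L-h} as
\[
a(\mu,\nu)/\tau=\vep\bigl(\nab\mu,\nab\nu\bigr)+\gamma\bigl(\vphi_h^{m-1}\nab\mu,\vphi_h^{m-1}\nab\nu\bigr)+\bigl(\vphi_h^{m-1}\nab p(\mu),\nab\nu\bigr).
\]
The first two terms are manifestly symmetric in $(\mu,\nu)$. For the third, the idea is to use \eqref{pressure-mu-h-weak} twice: once with $q=p(\nu)$ in the equation defining $p(\mu)$, and once with $q=p(\mu)$ in the equation defining $p(\nu)$. Both manipulations produce the same quantity $-\tfrac{1}{\gamma}\bigl(\nab p(\mu),\nab p(\nu)\bigr)$, which shows
\[
\bigl(\vphi_h^{m-1}\nab p(\mu),\nab\nu\bigr)=\bigl(\vphi_h^{m-1}\nab p(\nu),\nab\mu\bigr),
\]
completing the symmetry argument.

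For positive definiteness, I would set $\nu=\mu$ and get
\[
a(\mu,\mu)/\tau=\vep\|\nab\mu\|_{L^2}^2+\gamma\|\vphi_h^{m-1}\nab\mu\|_{L^2}^2+\bigl(\vphi_h^{m-1}\nab\mu,\nab p(\mu)\bigr).
\]
The cross term is at first sight of indefinite sign; this is the main obstacle. To control it I would test \eqref{pressure-mu-h-weak} with $q=p(\mu)\in\mathring{W}_h$, which gives $\|\nab p(\mu)\|_{L^2}^2=-\gamma\bigl(\vphi_h^{m-1}\nab\mu,\nab p(\mu)\bigr)$, hence $\bigl(\vphi_h^{m-1}\nab\mu,\nab p(\mu)\bigr)=-\tfrac{1}{\gamma}\|\nab p(\mu)\|_{L^2}^2$. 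Applying Cauchy--Schwarz to the same identity yields $\|\nab p(\mu)\|_{L^2}\le\gamma\|\vphi_h^{m-1}\nab\mu\|_{L^2}$, so that $\tfrac{1}{\gamma}\|\nab p(\mu)\|_{L^2}^2\le\gamma\|\vphi_h^{m-1}\nab\mu\|_{L^2}^2$. The ``bad'' cross term is therefore dominated by the second term, leaving
\[
a(\mu,\mu)\ge\tau\vep\|\nab\mu\|_{L^2}^2\ge 0.
\]

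Finally, if $a(\mu,\mu)=0$ then $\nab\mu\equiv 0$, so $\mu$ is constant on $\Ome$; but $\mu\in\mathring{V}_h\subset L^2_0(\Ome)$ forces $\mu\equiv 0$. This establishes strict positive definiteness on $\mathring{V}_h$ and completes the proof. I expect the only nonobvious step is noticing that the defining relation for $p(\mu)$, tested against $p(\mu)$ itself, simultaneously identifies the sign of the cross term and bounds its size by the quadratic term $\gamma\|\vphi_h^{m-1}\nab\mu\|_{L^2}^2$.
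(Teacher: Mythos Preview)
Your argument is correct. The paper itself omits the proof of this lemma ``for brevity,'' so there is nothing to compare against; your derivation fills the gap cleanly.

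One small remark that may sharpen the positive-definiteness step: after you substitute $(\vphi_h^{m-1}\nab\mu,\nab p(\mu))=-\tfrac{1}{\gamma}\|\nab p(\mu)\|_{L^2}^2$, you can avoid the Cauchy--Schwarz detour by observing the exact identity
\[
a(\mu,\mu)=\tau\vep\|\nab\mu\|_{L^2}^2+\frac{\tau}{\gamma}\bigl\|\nab p(\mu)+\gamma\vphi_h^{m-1}\nab\mu\bigr\|_{L^2}^2,
\]
which follows by expanding the square and using the same substitution once more. This is not only manifestly nonnegative but also mirrors the continuous energy law \eqref{eq1.12}, since $-\bigl(\nab p(\mu)+\gamma\vphi_h^{m-1}\nab\mu\bigr)$ is precisely the discrete velocity $\bu_h^m$ associated with $\mu$. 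Either route gives $a(\mu,\mu)\ge\tau\vep\|\nab\mu\|_{L^2}^2$, and your conclusion that equality forces $\mu\equiv 0$ in $\mathring{V}_h$ is correct.
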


The proof is omitted for brevity.  Note that in the next few calculations, 
the pressure, $p$, will be regarded as an auxiliary variable that can be 
calculated when the chemical potential, $\mu$, is known.  Owing to the last 
result, we can define 
an invertible linear operator $\mathcal{L} : \mathring{V}_h \rightarrow 
\mathring{V}_h$ via the following problem: given $\zeta\in \mathring{V}_h$, 
find $\mu\in \mathring{V}_h$ such that
\begin{equation}
a\left(\mu,\nu\right) = -\left(\zeta,\nu\right)_{L^2}  
\qquad \forall \nu\in V_h.
\end{equation}
This clearly has a unique solution because $a(\, \cdot \, , \, \cdot \, )$ 
is an inner product on $\mathring{V}_h$. We write $\mathcal{L}(\mu) = -\zeta$, 
or, equivalently, $\mu = -\mathcal{L}^{-1}(\zeta)$.

We now wish to define a negative norm, \emph{i.e.}, a discrete 
analogue to the $H^{-1}$ norm.  Again we omit the details for brevity.

\begin{lemma} \label{lem3.2}
Let $\zeta,\, \xi \in \mathring{V}_h$ and suppose 
$\mu_\zeta,\, \mu_\xi\in\mathring{V}_h$ are the unique weak 
solutions to ${\mathcal L}\left(\mu_\zeta\right) = -\zeta$ 
and ${\mathcal L}\left(\mu_\xi\right) = -\xi$.  Define
\begin{equation}
\left(\zeta,\xi\right)_{{\mathcal L}^{-1}} 
:=a\left(\mu_\zeta,\mu_\xi\right)
=-\left(\zeta,\mu_\xi\right)_{L^2} 
=-\left(\mu_\zeta,\xi\right)_{L^2}.
\label{crazy-inner-product-h}
\end{equation}
$\left(\, \cdot\, ,\, \cdot\, \right)_{{\mathcal L}^{-1}}$ defines an 
inner product on $\mathring{V}_h$, and the induced norm is 
\begin{equation}
\norm{\zeta}{\mathcal{L}^{-1}} 
= \sqrt{\left(\zeta,\zeta\right)_{{\mathcal L}^{-1}}}.  \label{crazy-norm-h}
\end{equation}
\end{lemma}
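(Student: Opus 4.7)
The plan is to deduce Lemma~\ref{lem3.2} as a formal consequence of Lemma~\ref{lem3.1} (which supplies bilinearity, symmetry, and positive-definiteness of $a(\cdot,\cdot)$ on $\mathring{V}_h$) together with the invertibility of the linear operator $\mathcal{L}:\mathring{V}_h\to\mathring{V}_h$ already established in the discussion following Lemma~\ref{lem3.1}. In particular, the map $\zeta\mapsto \mu_\zeta := -\mathcal{L}^{-1}(\zeta)$ is a linear bijection of $\mathring{V}_h$ onto itself, and no new analytic machinery should be required.

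I would first verify that the three expressions in \eqref{crazy-inner-product-h} are mutually consistent. From the defining relation $\mathcal{L}(\mu_\zeta)=-\zeta$ one has $a(\mu_\zeta,\nu)=-(\zeta,\nu)_{L^2}$ for every $\nu\in\mathring{V}_h$; choosing $\nu=\mu_\xi$ gives $a(\mu_\zeta,\mu_\xi)=-(\zeta,\mu_\xi)_{L^2}$. Reversing the roles of $\zeta$ and $\xi$ yields $a(\mu_\xi,\mu_\zeta)=-(\xi,\mu_\zeta)_{L^2}$, and the symmetry of $a$ (supplied by Lemma~\ref{lem3.1}) then forces $-(\zeta,\mu_\xi)_{L^2}=-(\xi,\mu_\zeta)_{L^2}=-(\mu_\zeta,\xi)_{L^2}$, so that all three representations agree.

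With the definition unambiguous, the inner-product axioms follow directly. Bilinearity of $(\cdot,\cdot)_{\mathcal{L}^{-1}}$ is inherited from bilinearity of $a$ together with the linearity of $\mathcal{L}^{-1}$; symmetry is inherited from the symmetry of $a$ (or, equivalently, from the identity $(\zeta,\mu_\xi)_{L^2}=(\mu_\zeta,\xi)_{L^2}$ obtained in the previous step). For positive-definiteness, setting $\xi=\zeta$ gives $(\zeta,\zeta)_{\mathcal{L}^{-1}}=a(\mu_\zeta,\mu_\zeta)\ge 0$; equality forces $\mu_\zeta=0$ by positive-definiteness of $a$, and the bijectivity of $\mathcal{L}$ then forces $\zeta=0$. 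The induced norm \eqref{crazy-norm-h} is thus well defined in the usual way. I do not anticipate any essential obstacle; the only non-routine moment is the coincidence of the three expressions for $(\zeta,\xi)_{\mathcal{L}^{-1}}$, which is precisely where the symmetry of $a$ (and hence the substantive content of Lemma~\ref{lem3.1}) is invoked.
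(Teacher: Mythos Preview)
Your proof is correct and is precisely the standard argument one would give here. The paper itself omits the proof of this lemma entirely (``Again we omit the details for brevity''), so there is nothing to compare against; your write-up fills that gap cleanly, and your identification of the symmetry of $a(\,\cdot\,,\,\cdot\,)$ as the point where Lemma~\ref{lem3.1} is genuinely used is exactly right.
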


Using this last norm we can define a variational problem closely 
related to our fully discrete scheme. 

\begin{lemma}\label{lem3.3}
Set $K_1 := \left(\vphi_h^{m-1},1\right)_{L^2}$, and define 
$\vphi_\star^{m-1} :=\vphi_h^{m-1}-K_1 \in\mathring{V}_h$. 
For all $\vphi\in\mathring{V}_h$, define the nonlinear functional
\begin{equation}
G(\vphi) := \frac{1}{2}\norm{\vphi-\vphi_\star^{m-1}}{\mathcal{L}^{-1}}^2 
+\frac{1}{4\vep}\norm{\vphi+K_1}{L^4}^4 
+\frac{\vep}{2}\norm{\nabla\vphi}{L^2}^2 
-\frac{1}{\vep}\left(\vphi_h^{m-1},\vphi\right)_{L^2}.
\end{equation}
$G$ is strictly convex and coercive on the linear subspace $\mathring{V}_h$.  
Consequently, $G$ has a unique minimizer, call it 
$\vphi_\star^m\in\mathring{V}_h$.  Moreover, $\vphi_\star^m\in\mathring{V}_h$ 
is the unique minimizer of $G$ if and only if it is the unique solution to 
\begin{equation}
\frac{1}{\vep}\left(\left(\vphi_\star^m+K_1\right)^3,\psi\right)_{L^2}
+\vep\left(\nabla\vphi_\star^m,\nabla\psi\right)_{L^2} 
- \left(\mu_\star^m,\psi\right)_{L^2} 
= \frac{1}{\vep}\left(\vphi_h^{m-1},\psi\right)_{L^2} \label{nonlinear-1}
\end{equation}
for all $\psi\in\mathring{V}_h$, where $\mu_\star^m\in \mathring{V}_h$ 
is the unique solution to
\begin{equation}
a\left(\mu_\star^m,\nu\right) 
= -\left(\vphi_\star^m-\vphi_\star^{m-1},\nu\right)_{L^2} 
\qquad \forall  \nu\in\mathring{V}_h.  \label{nonlinear-2}
\end{equation}
\end{lemma}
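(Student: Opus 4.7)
The plan is to treat this as a standard finite-dimensional variational argument. First I would show that $G$ is strictly convex and coercive on the linear subspace $\mathring{V}_h$, which (since $\mathring{V}_h$ is finite-dimensional) guarantees the existence of a unique minimizer $\vphi_\star^m$. Then I would compute the first variation of $G$, rewrite the $\mathcal{L}^{-1}$ contribution via the auxiliary variable $\mu_\star^m$ from Lemma~\ref{lem3.2}, and observe that strict convexity makes the Euler-Lagrange system both necessary and sufficient, yielding the equivalence with \eqref{nonlinear-1}--\eqref{nonlinear-2}.

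For strict convexity, I would inspect the Hessian of each term. The term $\frac12\norm{\vphi-\vphi_\star^{m-1}}{\mathcal{L}^{-1}}^2$ is a quadratic form in an honest inner product (Lemma~\ref{lem3.2}), hence convex with positive semidefinite Hessian. The fourth-power term $\frac{1}{4\vep}\norm{\vphi+K_1}{L^4}^4$ has Hessian $\frac{3}{\vep}\int_\Omega (\vphi+K_1)^2\psi^2\,dx\ge 0$, so is also convex with positive semidefinite Hessian. The Dirichlet-type term $\frac{\vep}{2}\norm{\nabla\vphi}{L^2}^2$ has Hessian $\vep\prodt{\nabla\psi}{\nabla\psi}$, which is strictly positive definite on $\mathring{V}_h$ thanks to the Poincaré inequality for mean-zero functions. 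The linear term contributes nothing to the Hessian. Summing a strictly positive definite quadratic with two positive semidefinite ones gives a strictly positive definite Hessian, so $G$ is strictly convex. For coercivity, the two nonlinear terms are nonnegative, so Cauchy--Schwarz, Poincaré on $\mathring{V}_h$, and Young's inequality yield
\[
G(\vphi)\ge \frac{\vep}{2}\norm{\nabla\vphi}{L^2}^2-\frac{1}{\vep}\norm{\vphi_h^{m-1}}{L^2}\norm{\vphi}{L^2}\ge \frac{\vep}{4}\norm{\nabla\vphi}{L^2}^2-\frac{C}{\vep^3}\norm{\vphi_h^{m-1}}{L^2}^2,
\]
so $G(\vphi)\to\infty$ as $\norm{\vphi}{H^1}\to\infty$, and by norm equivalence on the finite-dimensional $\mathring{V}_h$, $G$ is coercive. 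Strict convexity plus coercivity on the finite-dimensional Hilbert space $\mathring{V}_h$ produces a unique minimizer $\vphi_\star^m\in\mathring{V}_h$.

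To finish, I would compute $\frac{d}{ds}G(\vphi_\star^m+s\psi)\big|_{s=0}=0$ for every $\psi\in\mathring{V}_h$, obtaining
\[
\left(\vphi_\star^m-\vphi_\star^{m-1},\psi\right)_{\mathcal{L}^{-1}}+\frac{1}{\vep}\prodt{(\vphi_\star^m+K_1)^3}{\psi}+\vep\prodt{\nabla\vphi_\star^m}{\nabla\psi}-\frac{1}{\vep}\prodt{\vphi_h^{m-1}}{\psi}=0.
\]
The key algebraic step is to identify the first term: by \eqref{crazy-inner-product-h} the pairing $(\vphi_\star^m-\vphi_\star^{m-1},\psi)_{\mathcal{L}^{-1}}$ equals $-\prodt{\mu_\star^m}{\psi}$, provided $\mu_\star^m\in\mathring{V}_h$ is defined by $a(\mu_\star^m,\nu)=-\prodt{\vphi_\star^m-\vphi_\star^{m-1}}{\nu}$ for all $\nu\in\mathring{V}_h$, which is exactly \eqref{nonlinear-2}. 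Substituting recovers \eqref{nonlinear-1}, giving the ``only if'' direction. For the ``if'' direction, any pair $(\vphi_\star^m,\mu_\star^m)$ satisfying \eqref{nonlinear-1}--\eqref{nonlinear-2} is a critical point of the strictly convex coercive functional $G$, hence is the unique minimizer.

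The main obstacle I anticipate is not conceptual depth but careful bookkeeping of the mean-zero constraint: one must check that the differences $\vphi-\vphi_\star^{m-1}$ and $\vphi_\star^m-\vphi_\star^{m-1}$ lie in $\mathring{V}_h$ so that the $\mathcal{L}^{-1}$ pairing and the auxiliary problem defining $\mu_\star^m$ are well posed, and that test functions in the first-order optimality condition are consistently restricted to $\mathring{V}_h$ rather than to the full $V_h$. All remaining ingredients are standard convex-analytic manipulations in a finite-dimensional Hilbert space.
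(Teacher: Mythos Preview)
Your proposal is correct and follows essentially the same route as the paper: compute the first variation of $G$, identify the $\mathcal{L}^{-1}$ contribution as $-\prodt{\mu_\star^m}{\psi}$ via Lemma~\ref{lem3.2}, and invoke strict convexity plus coercivity on the finite-dimensional space $\mathring{V}_h$. The only cosmetic difference is that the paper extracts strict convexity from the $\mathcal{L}^{-1}$ term (whose second variation is $(\psi,\psi)_{\mathcal{L}^{-1}}>0$ for $\psi\neq 0$), whereas you extract it from the Dirichlet term via Poincar\'e on $\mathring{V}_h$; either choice works.
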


\begin{proof}
In detail, the first variation, \emph{i.e.} the gradient, of the first 
term of $G$ is
\begin{eqnarray} 
\left.\frac{d}{ds}\left[\frac{1}{2}\norm{\vphi+s\psi-\vphi_\star^{m-1}}
{\mathcal{L}^{-1}}^2\right]\right|_{s=0} &=& \left.
\left[\left(\vphi+s\psi-\vphi_\star^{m-1},\psi\right)_{\mathcal{L}^{-1}}\right]
\right|_{s=0} \nonumber \\
&=& \left(\vphi-\vphi_\star^{m-1},\psi\right)_{\mathcal{L}^{-1}} \nonumber \\
&=& - \left(\mu,\psi\right)_{L^2}, \nonumber 
\end{eqnarray}
where, owing to the definition of the inner product 
$\left(\, \cdot \, , \, \cdot \, \right)_{\mathcal{L}^{-1}}$, 
$\mu\in \mathring{V}_h$ is the unique solution to 
\begin{equation}
a\left(\mu,\nu\right) = -\left(\vphi-\vphi_\star^{m-1},\nu\right)_{L^2} 
\qquad \forall \nu\in\mathring{V}_h.  
\end{equation}
The second variation is
\begin{equation*}
\left.\frac{d^2}{ds^2}\left[\frac{1}{2}\norm{\vphi+s\psi-\vphi_\star^{m-1}}
{\mathcal{L}^{-1}}^2\right]\right|_{s=0} 
=\left(\psi,\psi\right)_{\mathcal{L}^{-1}} > 0 \qquad \forall \psi \ne 0,
\end{equation*}
which establishes the strict convexity of the term. The strict 
convexity of $G$ follows because each of the other terms is at least convex.  
The coercivity of $G$ follows from an estimate of the form 
\begin{equation}
G(\vphi) \ge C_1(\vep) \norm{\vphi}{H^1}^2-C_2(\vep) \qquad \forall \vphi\in\mathring{V}_h, 	
\end{equation}
where $0<C_1(\vep),C_2(\vep)<\infty$ are constants. By the standard theory of convex optimization, 
$G$ has a unique (bounded) minimizer in $\mathring{V}_h$, call 
it $\vphi_\star^m$.  Moreover, $\vphi_\star^m$ is the unique minimizer 
of $G$ if and only if it is the unique solution to 
$A_{\vphi_\star^m}(\psi) = 0$, for all $\psi\in\mathring{V}_h$, where
\begin{eqnarray}
A_{\vphi}(\psi) := \frac{d}{ds}G(\vphi+s\psi)\Bigr|_{s=0} 
&=& \frac{1}{\vep}\bigl( (\vphi+K_1)^3,\psi \bigr) 
+ \vep \left(\nabla\vphi,\nabla\psi\right)_{L^2} \nonumber \\
&&+ \left(\vphi-\vphi_\star^{m-1},\psi\right)_{\mathcal{L}^{-1}} 
- \frac{1}{\vep}\left(\vphi_h^{m-1},\psi\right)_{L^2}.  
\end{eqnarray}
The rest of the details follow from the definition of the 
inner product $(\cdot,\cdot)_{\mathcal{L}^{-1}}$.
\end{proof}

Finally, we are in the position to prove the unconditional 
unique solvability of our scheme.

\begin{theorem} \label{thm-existence-uniqueness}
The scheme \eqref{eq3.2}--\eqref{eq3.4} is uniquely solvable for any 
mesh parameters $\tau$ and $h$ and for any phase parameter $\vep$.
\end{theorem}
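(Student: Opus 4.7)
The plan is to leverage Lemma~\ref{lem3.3}, which reformulates the nonlinear step of the scheme as the minimization of a strictly convex, coercive functional $G$ on the mean-zero subspace $\mathring{V}_h$. Since strict convexity plus coercivity delivers a unique minimizer, the crux of the proof is to set up a bijection between solutions $(\phm,\mu_h^m,\vphihm)$ of \eqref{eq3.2}--\eqref{eq3.6} and the minimizer $\vphi_\star^m \in \mathring{V}_h$ of $G$, modulo the means that float freely in the original formulation.

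For \emph{existence}, I would start from the unique minimizer $\vphi_\star^m \in \mathring{V}_h$ supplied by Lemma~\ref{lem3.3} and the associated $\mu_\star^m \in \mathring{V}_h$ satisfying \eqref{nonlinear-2}, then lift them to the full spaces by setting $\vphihm := \vphi_\star^m + K_1$ (restoring the correct mean $K_1 = (\vphi_h^{m-1},1)$), $\phm := p(\mu_\star^m) \in \mathring{W}_h$ via the auxiliary pressure problem \eqref{pressure-mu-h-weak}, and $\mu_h^m := \mu_\star^m + \bar\mu$ with the single scalar $\bar\mu$ chosen so that $(\mu_h^m,1) = \frac{1}{\vep}(f_h^m,1)$. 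Then \eqref{eq3.2} is immediate by definition of $\phm$ (constants lie in the kernel). For \eqref{eq3.3} on test functions $\nu_h \in \mathring{V}_h$, unfolding the definition of the bilinear form $a$ shows that the left-hand side coincides with $\frac{1}{\tau}\bigl[a(\mu_\star^m,\nu_h) + (\vphi_\star^m - \vphi_\star^{m-1},\nu_h)\bigr]$, which vanishes by \eqref{nonlinear-2}, while the case $\nu_h \equiv 1$ holds because $\vphihm$ and $\vphi_h^{m-1}$ share the same mean. Similarly \eqref{eq3.4} on $\psi_h \in \mathring{V}_h$ follows from \eqref{nonlinear-1} (noting $(\vphi_\star^m + K_1)^3 = (\vphihm)^3$ and that adding the constant $\bar\mu$ to $\mu_\star^m$ does not affect the $\mathring{V}_h$ tests), while $\psi_h \equiv 1$ is exactly the defining condition of $\bar\mu$.

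For \emph{uniqueness}, I would reverse the construction. Given any solution $(\phm,\mu_h^m,\vphihm)$, testing \eqref{eq3.3} with $\nu_h \equiv 1$ gives the discrete mass-conservation identity $(\vphihm,1) = (\vphi_h^{m-1},1) = K_1$, so $\vphi_\star^m := \vphihm - K_1 \in \mathring{V}_h$ is well-defined; let $\mu_\star^m := \mu_h^m - \frac{1}{|\Ome|}(\mu_h^m,1)$ denote the mean-zero part of $\mu_h^m$. Testing \eqref{eq3.4} against $\psi_h \in \mathring{V}_h$ recovers \eqref{nonlinear-1}, while \eqref{eq3.3} for $\nu_h \in \mathring{V}_h$, combined with \eqref{eq3.2} identifying $\phm = p(\mu_\star^m)$, recovers \eqref{nonlinear-2}. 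By Lemma~\ref{lem3.3} this characterizes $\vphi_\star^m$ as the unique minimizer of $G$, so $\vphihm$ is uniquely determined; then $\mu_\star^m$ is uniquely determined by \eqref{nonlinear-2} (since $a$ is an inner product on $\mathring{V}_h$), the mean of $\mu_h^m$ is fixed by \eqref{eq3.4} with $\psi_h \equiv 1$, and $\phm \in \mathring{W}_h$ is fixed by \eqref{eq3.2}.

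The main obstacle is bookkeeping rather than substantive analysis: the minimizer $\vphi_\star^m$ lives in $\mathring{V}_h$ while the scheme's unknowns $\vphihm$ and $\mu_h^m$ live in the full space $V_h$, so one must carefully separate out the mean-value components and verify that the discrete mass-conservation identity, obtained ``for free'' from testing against constants, is precisely the compatibility condition needed to reduce the scheme to the mean-zero problem addressed by Lemma~\ref{lem3.3}.
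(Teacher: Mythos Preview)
Your proposal is correct and follows essentially the same approach as the paper: both arguments invoke Lemma~\ref{lem3.3} to obtain the unique minimizer $\vphi_\star^m\in\mathring{V}_h$, then set up a bijection between solutions of \eqref{eq3.2}--\eqref{eq3.4} and the pair $(\mu_\star^m,\vphi_\star^m)$ by shifting the means (the paper's constants $K_1/|\Omega|$ and $K_2/|\Omega|$ are exactly your $K_1$ and $\bar\mu$). Your write-up is somewhat more explicit in separating the test spaces into $\mathring{V}_h$ plus constants and in tracking the pressure $\phm=p(\mu_\star^m)$, but the argument is the same.
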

	
\begin{proof}
First it is clear that a necessary condition for solvability of \eqref{eq3.3}
is that 
\begin{equation}
\left(\vphi_h^m,1\right)_{L^2} = \bigl(\vphi_h^{m-1},1\bigr)_{L^2} =: K_1,
\end{equation}
as can be found by taking $\nu_h\equiv 1$ in \eqref{eq3.3}.  
Now, let $\left(\mu_\star^m,\vphi_\star^m\right) 
\in\mathring{V}_h\times\mathring{V}_h$ be a solution 
of (\ref{nonlinear-1})--(\ref{nonlinear-2}). 
Define $\vphi^m_h := \vphi_\star^m+K_1/|\Omega|$.  Set
\begin{equation}
K_2:=\frac{1}{\vep}\Bigl(\bigl((\vphi_h^m)^3,1\bigr)_{L^2}- K_1\Bigr),
\end{equation}
and define $\mu_h^m:=\mu_\star^m+K_2/|\Omega|$.  Then it is straightforward to 
show that $\left(\mu_h^m,\vphi_h^m\right) \in V_h\times V_h$ is a solution to 
\eqref{eq3.3}--\eqref{eq3.4}.  In fact, there is a one-to-one 
correspondence of the respective solution sets.  Namely, 
if $\left(\mu_h^m,\vphi_h^m\right) \in V_h\times V_h$ is a solution 
to \eqref{eq3.2}--\eqref{eq3.4} then 
$\left(\mu_h^m-K_2/|\Omega|,\vphi_h^m-K_1/|\Omega|\right) \in\mathring{V}_h\times\mathring{V}_h$ 
is a solution to (\ref{nonlinear-1})--(\ref{nonlinear-2}).  
But (\ref{nonlinear-1})--(\ref{nonlinear-2}) admits only a unique solution, 
which proves that \eqref{eq3.2}--\eqref{eq3.4} is uniquely solvable.
\end{proof}

We now establish a discrete energy law for the numerical scheme
that mimics the continuous version \eqref{eq2.7}.

\begin{lemma} \label{lem4.1}
Let $(\phm,\mu_h^m,\vphihm)$ denote the unique solution of the scheme 
\eqref{eq3.3}--\eqref{eq3.6} and define 
$\buhm:=-\nab \phm-\gamma \vphi_h^{m-1}\nab\mu_h^m$, then there holds 
\begin{align} \label{eq4.1}
E_h^\ell &+\tau \sum_{m=1}^\ell\biggl\{  \vep\norm{\nab \mu_h^m}{L^2}^2 
+\frac{1}{\gamma}\norm{\buhm}{L^2}^2   
+ \frac{\tau}{4\vep} \Bigl[\,2\vep^2\norm{d_t\nab\vphihm}{L^2}^2 \\
&\quad +\norm{d_t(\vphihm)^2}{L^2}^2 + 2\norm{\vphihm d_t\vphihm}{L^2}^2 
+2\norm{d_t\vphihm}{L^2}^2 \, \Bigr] \biggr\} = E_h^0 
\nonumber
\end{align}
for all $0\leq \ell \leq M$. Here $E_h^m:=\Jc_\vep(\vphihm)$ 
and $\Jc_\vep(\, \cdot\, )$ is defined in \eqref{energy}.
\end{lemma}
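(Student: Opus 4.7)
The strategy is the standard one for energy laws with convex-splitting discretizations: test each of the three finite element equations with carefully chosen functions, add the results, and then carefully telescope the time-discrete terms. Specifically, I would test \eqref{eq3.2} with $q_h = \phm/\gamma$, test \eqref{eq3.3} with $\nu_h = \mu_h^m$, and test \eqref{eq3.4} with $\psi_h = -d_t\vphihm$ (all three are admissible since they lie in the respective test spaces). Adding the three resulting identities, the coupling terms involving $\nab\phm$ and $\vphi_h^{m-1}\nab\mu_h^m$ combine as a perfect square:
\[
\tfrac{1}{\gamma}\|\nab\phm\|_{L^2}^2 + 2(\vphi_h^{m-1}\nab\mu_h^m,\nab\phm) + \gamma\|\vphi_h^{m-1}\nab\mu_h^m\|_{L^2}^2 = \tfrac{1}{\gamma}\|\buhm\|_{L^2}^2,
\]
and the $(\mu_h^m, d_t\vphihm)$ terms cancel. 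What remains is $\frac{1}{\gamma}\|\buhm\|_{L^2}^2 + \vep\|\nab\mu_h^m\|_{L^2}^2 + \vep(\nab\vphihm,\nab d_t\vphihm) + \frac{1}{\vep}(f_h^m, d_t\vphihm)=0$.

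The heart of the proof lies in rewriting the last two terms as exact discrete derivatives of $E_h^m$ plus the nonnegative "numerical dissipation" terms appearing in \eqref{eq4.1}. For the gradient term, the elementary identity $a(a-b)=\tfrac12(a^2-b^2) + \tfrac12(a-b)^2$ gives
\[
\vep(\nab\vphihm,\nab d_t\vphihm) = \frac{\vep}{2}\, d_t\|\nab\vphihm\|_{L^2}^2 + \frac{\vep\tau}{2}\|d_t\nab\vphihm\|_{L^2}^2.
\]
For the nonlinear contribution $\frac{1}{\vep}((\vphihm)^3, d_t\vphihm)$, the key algebraic identity is
\[
a^3(a-b) = \tfrac14(a^4-b^4) + \tfrac14(a^2-b^2)^2 + \tfrac12 a^2(a-b)^2,
\]
which one verifies by expanding; applied pointwise with $a=\vphihm$, $b=\vphi_h^{m-1}$ and divided by $\tau$, this produces precisely the squared-norms $\|d_t(\vphihm)^2\|_{L^2}^2$ and $\|\vphihm d_t\vphihm\|_{L^2}^2$ appearing in \eqref{eq4.1}. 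The explicit part $-\frac{1}{\vep}(\vphi_h^{m-1}, d_t\vphihm)$ is handled by the companion identity $-b(a-b)=-\tfrac12(a^2-b^2)+\tfrac12(a-b)^2$, contributing the remaining $\|d_t\vphihm\|_{L^2}^2$ term. Combining with the $\tfrac14(a^4-b^4)$ and $-\tfrac12(a^2-b^2)$ contributions and using $F(\vphi)=\tfrac14\vphi^4-\tfrac12\vphi^2+\tfrac14$, the telescoping portion is exactly $d_t\!\left[\tfrac{\vep}{2}\|\nab\vphihm\|_{L^2}^2 + \tfrac{1}{\vep}\int_\Omega F(\vphihm)\,dx\right] = d_t E_h^m$.

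Putting these pieces together yields the one-step energy identity
\[
d_t E_h^m + \vep\|\nab\mu_h^m\|_{L^2}^2 + \tfrac{1}{\gamma}\|\buhm\|_{L^2}^2 + \tfrac{\tau}{4\vep}\bigl[2\vep^2\|d_t\nab\vphihm\|_{L^2}^2 + \|d_t(\vphihm)^2\|_{L^2}^2 + 2\|\vphihm d_t\vphihm\|_{L^2}^2 + 2\|d_t\vphihm\|_{L^2}^2\bigr] = 0.
\]
Multiplying by $\tau$ and summing over $m=1,\ldots,\ell$ telescopes the $d_t E_h^m$ term into $E_h^\ell - E_h^0$, giving exactly \eqref{eq4.1}.

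The main obstacle is recognizing (and verifying) the pointwise algebraic identity $a^3(a-b)=\tfrac14(a^4-b^4)+\tfrac14(a^2-b^2)^2+\tfrac12 a^2(a-b)^2$; this is the exact discrete analogue of the chain rule for $F_c(\vphi)=\tfrac14\vphi^4$ that makes the convex-splitting scheme energy-stable without requiring any CFL-type condition, and it is precisely what produces the two extra quadratic dissipation terms $\|d_t(\vphihm)^2\|_{L^2}^2$ and $\|\vphihm d_t\vphihm\|_{L^2}^2$ in \eqref{eq4.1}. Once this identity is in hand, the rest of the argument is bookkeeping.
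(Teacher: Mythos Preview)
Your proposal is correct and follows essentially the same route as the paper's proof: the paper also tests \eqref{eq3.2} with $q_h=\phm$ (equivalent to your $\phm/\gamma$ after dividing through), \eqref{eq3.3} with $\nu_h=\mu_h^m$, and \eqref{eq3.4} with $\psi_h=-d_t\vphihm$, then invokes exactly the two discrete identities you derive for $(\nab\vphihm,d_t\nab\vphihm)$ and $(f_h^m,d_t\vphihm)$ before summing in $m$. Your explicit verification of the pointwise identity $a^3(a-b)=\tfrac14(a^4-b^4)+\tfrac14(a^2-b^2)^2+\tfrac12 a^2(a-b)^2$ is more detailed than what the paper records, but the arguments are otherwise identical.
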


\begin{proof}
The desired estimate \eqref{eq4.1} follows from setting $q_h=\phm$ 
in \eqref{eq3.2}, $\nu_h=\mu_h^m$ in \eqref{eq3.3}, 
$\psi_h=-d_t\vphihm$ in \eqref{eq3.4}, adding the resulting equations, 
using the identities 
\begin{align*}
\prodt{\nab\vphihm}{d_t \nab\vphihm} 
&=\frac12\,\bigl[\,d_t\norm{\nab \vphihm}{L^2}^2 
+\tau \norm{d_t \nab \vphihm}{L^2}^2 \,\bigr], \\
\prodt{f_h^m}{d_t\vphihm} &= \frac14\, d_t\norm{(\vphihm)^2-1}{L^2}^2 
+\frac{\tau}4 \bigl[\|d_t(\vphihm)^2\|_{L^2}^2 \\
&\hskip 1.4in 
+2\|\vphihm d_t\vphihm\|_{L^2}^2 +2\|d_t\vphihm\|_{L^2}^2 \, \bigr],
\end{align*}
and applying the operator $\tau\sum_{m=1}^\ell$ to the combined equation.
\end{proof}

The discrete energy law immediately implies the following uniform (in 
$\vep, h, \tau$) \emph{a priori} estimates for $(\phm,\mu_h^m,\vphihm)$.

\begin{lemma} \label{lem4.2}
Let $(\phm,\mu_h^m,\vphihm)$ be the unique solution of 
\eqref{eq3.2}--\eqref{eq3.6} and define 
$\buhm:=-\nab \phm-\gamma \vphi_h^{m-1}\nab\mu_h^m$. 
Suppose that $E_h^0<\infty$.  Then, for all $m\ge 1$,
	\begin{equation}
\int_\Ome\, \vphihm\, dx = \int_\Ome\, \vphi_h^0\, dx ,
	\label{eq4.2}
	\end{equation}
and, in addition, there hold the following estimates: 
	\begin{align}
\max_{0\leq m\leq M} \left[ \vep\norm{\nab \vphihm}{L^2}^2
+\frac{1}{\vep} \prodt{F(\vphihm)}{1} \right] &\leq C, 
	\label{eq4.3}
	\\
\max_{0\leq m\leq M}\norm{\vphi_h^m}{H^1}^2 &\le \frac{C}{\vep},
	\label{LinfH1phi-discrete}
	\\
\tau \sum_{m=1}^M \left[ \vep\norm{\nab\mu_h^m}{L^2}^2 
+\frac{1}{\gamma}\norm{\buhm}{L^2}^2  \right] &\leq C, 
	\label{eq4.4}
	\\
\sum_{m=1}^M \left[ \vep\norm{\nab\vphihm-\nab\vphi_h^{m-1}}{L^2}^2
+\frac{1}{\vep}\norm{\vphihm-\vphi_h^{m-1}}{L^2}^2 \right] &\leq C, 
	\label{eq4.5}
	\\
\sum_{m=1}^M \left[ \norm{\vphihm(\vphihm-\vphi_h^{m-1})}{L^2}^2
+\norm{(\vphihm)^2-(\vphi_h^{m-1})^2}{L^2}^2 \right] &\leq C\vep , 
	\label{eq4.6}
	\\
 \tau \sum_{m=1}^M \, \norm{\nab \phm}{L^{\frac32}}^2 &\leq \frac{C}{\vep^2}, 
	\label{eq4.7a} 
	\\
 \tau \sum_{m=1}^M \, \norm{d_t\vphihm}{(W^{1,3})^*}^2 &\leq \frac{C}{\vep} , 
	\label{eq4.7}
\end{align}
for some $\vep$, $h$, and $\tau$-independent constant $C=C(E_h^0)>0$.
\end{lemma}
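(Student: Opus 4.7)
The overall strategy is to read the first six estimates directly off the discrete energy identity of Lemma~\ref{lem4.1}, then use the pointwise relation $\buhm=-\nab\phm-\gamma\vphi_h^{m-1}\nab\mu_h^m$ to derive the pressure bound \eqref{eq4.7a}, and finally pass from test functions in $V_h$ to test functions in $W^{1,3}(\Ome)$ via an $L^2$-projection argument for \eqref{eq4.7}.

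\emph{Direct consequences of the discrete energy law.} Since $1\in V_h$, choosing $\nu_h\equiv 1$ in \eqref{eq3.3} gives $d_t(\vphihm,1)=0$, which iterates to \eqref{eq4.2}. In the energy identity \eqref{eq4.1} every term on the left is non-negative, so discarding the dissipation terms yields \eqref{eq4.3}, and discarding only $E_h^\ell$ yields \eqref{eq4.4}. The telescoping bounds \eqref{eq4.5} and \eqref{eq4.6} drop out by rewriting, for $v\in\{\nab\vphihm,\vphihm,(\vphihm)^2,\vphihm d_t\vphihm\}$, the identity $\tau^2\|d_t v\|_{L^2}^2=\|v^m-v^{m-1}\|_{L^2}^2$ inside the bracket $\tfrac{\tau^2}{4\vep}[\,\cdots]$: the two summands containing prefactors $\vep^2$ and $1$ produce respectively the $\vep$ and $\vep^{-1}$ weights in \eqref{eq4.5}, while the remaining two summands, after being bounded by $E_h^0$, yield the $C\vep$ bound in \eqref{eq4.6}. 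Finally, the $L^\infty(H^1)$ bound \eqref{LinfH1phi-discrete} follows by combining \eqref{eq4.3} with the elementary inequality $F(\vphi)\ge \tfrac12\vphi^2-\tfrac34$ to obtain a uniform $L^2$ bound on $\vphihm$, together with $\vep\|\nab\vphihm\|_{L^2}^2\le C$.

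\emph{Pressure bound.} Using $\nab\phm=-\buhm-\gamma\vphi_h^{m-1}\nab\mu_h^m$ pointwise, Hölder with exponents $\tfrac16+\tfrac12=\tfrac23$, and the Sobolev embedding $H^1(\Ome)\hookrightarrow L^6(\Ome)$ valid for $d=2,3$, together with \eqref{LinfH1phi-discrete}, I obtain
\[
\|\nab\phm\|_{L^{3/2}}^2\le C\|\buhm\|_{L^2}^2+C\vep^{-1}\|\nab\mu_h^m\|_{L^2}^2.
\]
Multiplying by $\tau$, summing over $m$, and invoking \eqref{eq4.4} (which controls $\tau\sum\|\buhm\|_{L^2}^2\le C$ and $\vep\tau\sum\|\nab\mu_h^m\|_{L^2}^2\le C$) produces the desired $\vep^{-2}$ bound of \eqref{eq4.7a}.

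\emph{Main obstacle: the negative-norm bound \eqref{eq4.7}.} The only nontrivial point is that \eqref{eq3.3} admits test functions only in $V_h$ while the $(W^{1,3})^*$ norm pairs with all of $W^{1,3}(\Ome)$. I would introduce the $L^2$-projection $P_h\colon L^2(\Ome)\to V_h$ (or a Scott--Zhang quasi-interpolant) and exploit the fact that on quasi-uniform meshes such a projection is simultaneously $H^1$- and $W^{1,3}$-stable. Since $d_t\vphihm\in V_h$, for every $\nu\in W^{1,3}(\Ome)$ one has $(d_t\vphihm,\nu)=(d_t\vphihm,P_h\nu)$; substituting $\nu_h=P_h\nu$ in \eqref{eq3.3} and regrouping via $\buhm$ gives
\[
(d_t\vphihm,\nu)=-\vep\bigl(\nab\mu_h^m,\nab P_h\nu\bigr)+\bigl(\vphi_h^{m-1}\buhm,\nab P_h\nu\bigr).
\]
Hölder, $H^1\hookrightarrow L^6$, \eqref{LinfH1phi-discrete}, and the projection stabilities then deliver
\[
\|d_t\vphihm\|_{(W^{1,3})^*}\le C\bigl(\vep\|\nab\mu_h^m\|_{L^2}+\vep^{-1/2}\|\buhm\|_{L^2}\bigr),
\]
and squaring, multiplying by $\tau$, summing, and applying \eqref{eq4.4} yields \eqref{eq4.7}. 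The main subtlety is identifying a projection that is simultaneously stable in $H^1$ and $W^{1,3}$ on quasi-uniform meshes; this is classical but must be cited carefully.
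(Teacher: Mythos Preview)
Your proposal is correct and follows essentially the same route as the paper: mass conservation from $\nu_h=1$, estimates \eqref{eq4.3}--\eqref{eq4.6} and \eqref{LinfH1phi-discrete} read directly off the discrete energy law \eqref{eq4.1} (with $F(\vphi)\ge\tfrac12\vphi^2-\tfrac34$ for the $L^2$ part), the pressure bound from $\nab\phm=-\buhm-\gamma\vphi_h^{m-1}\nab\mu_h^m$ via H\"older and $H^1\hookrightarrow L^6$, and the $(W^{1,3})^*$ bound via the $L^2$-projection $\mathcal{Q}_h$ and its $W^{1,3}$-stability on quasi-uniform meshes. The paper in fact only invokes the $W^{1,3}$-stability of $\mathcal{Q}_h$ (bounding $\|\nab\mathcal{Q}_h\nu\|_{L^2}\le C\|\nab\mathcal{Q}_h\nu\|_{L^3}\le C\|\nab\nu\|_{L^3}$), so the separate $H^1$-stability you mention is not strictly needed.
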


\begin{proof}
\eqref{eq4.2} follows immediately from setting $\nu_h=1$ in \eqref{eq3.3},
and \eqref{eq4.3}--\eqref{eq4.6} are the immediate corollaries of the
discrete energy law \eqref{eq4.1}. \eqref{eq4.7a} follows from
the identity $\nab \phm= -\buhm-\gamma \vphi_h^{m-1}\nab\mu_h^m$,
the Sobolev embedding $H^1(\Ome)\hookrightarrow L^6(\Ome)$ for $d=2,\, 3$,
Young's inequality and estimates \eqref{LinfH1phi-discrete} and \eqref{eq4.4}.

Let $\mathcal{Q}_h$ denote the standard $L^2$ projection operator 
into $V_h$ (cf. \cite{bs08,ciarlet}), and for any $\nu\in W^{1,3}(\Ome)$, 
set $\nu_h=\mathcal{Q}_h\nu$ in \eqref{eq3.3}.  To prove \eqref{eq4.7}, 
we  use the Schwarz inequality and the Sobolev embedding 
$H^1(\Ome)\hookrightarrow L^6(\Ome)$ (for $d=2,3$) to we get
\begin{align*}
\prodt{d_t\vphihm}{\nu}&= \prodt{d_t\vphihm}{\mathcal{Q}_h\nu} 
	\\
&=-\vep\bigl(\nab \mu_h^m,\nab \mathcal{Q}_h\nu \bigr) 
+ \bigl(\vphi_h^{m-1}\buhm, \nab \mathcal{Q}_h\nu \bigr) 
	\\
&\leq \vep\norm{\nab\mu_h^m}{L^2} \norm{\nab \mathcal{Q}_h\nu}{L^2} 
+ \norm{\vphi_h^{m-1}}{L^6}\norm{\bu_h^m}{L^2} \norm{\nab \mathcal{Q}_h\nu}{L^3}\\
&\leq C\left[\vep\norm{\nab\mu_h^m}{L^2} 
+\norm{\vphi_h^{m-1}}{H^1} \norm{\bu_h^m}{L^2} \right]\, \norm{\nab \mathcal{Q}_h \nu}{L^3}\\
&\leq C\left[\vep\norm{\nab\mu_h^m}{L^2} 
+\frac{1}{\sqrt{\vep}} \norm{\bu_h^m}{L^2} \right]\, \norm{\nab \nu}{L^3},
\end{align*}
where we have used the $W^{1,3}$ stability of the $L^2$ projection
$\mathcal{Q}_h$ (cf. \cite{bs08,ciarlet}) to get the last inequality.
\eqref{eq4.7} now follows immediately from the above inequality and
estimates \eqref{eq4.3} and \eqref{eq4.4}. The proof is complete.
\end{proof}

\begin{remark}\label{rem4.1}
Property \eqref{eq4.2} says that the proposed numerical method enjoys
the same mass conservation law as the phase field model 
(cf. \eqref{eq2.6}).
This property will be validated numerically in Sec.~\ref{sec-5}.
\end{remark}

\section{Convergence analysis} \label{sec-4}

The goal of this section is to prove that the fully discrete finite 
element solution has a unique accumulation point (in some function space)
and this accumulation point is necessarily a weak solution 
to problem \eqref{eq2.3}--\eqref{eq2.5}. A byproduct of this
convergence result is to provide a constructive proof 
of the existence of weak solutions to problem \eqref{eq2.3}--\eqref{eq2.5}.

First, we derive some additional estimates for the finite
element solution. To the end, we introduce the discrete 
Laplacian $\Del_h: V_h\to V_h$ which is defined as follows: 
for any $v_h\in V_h$, $\Del_h v_h\in V_h$ denotes the unique solution 
to the problem
	\begin{equation}
\bigl(\Del_h v_h,  w_h\bigr) =-\bigl(\nab v_h,\nab w_h\bigr) 
\qquad\forall w_h\in V_h.
	\label{eq4.8}
	\end{equation}
In particular, setting $w_h = \Del_h v_h$ in \eqref{eq4.8}, we obtain
\begin{displaymath}
\norm{\Del_h v_h}{L^2}^2 = -\left(\nab v_h,\nab \Del_h v_h\right).
\end{displaymath}

\begin{lemma} \label{lem4.3}
Let $(\phm,\mu_h^m,\vphihm)$ and $\buhm$ be same as in 
Lemma \ref{lem4.2}. Then, under the assumption $E_h^0<\infty$, there 
hold the following additional estimates: for $d=2,3$, 
	\begin{align}
\tau \sum_{m=1}^M \norm{\Del_h \vphihm}{L^2}^2 &\leq C_0(T+1) C(\vep),
	\label{eq4.10} 
	\\
\tau \sum_{m=1}^M \norm{\vphihm}{L^\infty}^{\frac{4(6-d)}{d}} 
&\leq C_0(T+1)C(\vep), 
	\label{eq4.10a} 
	\\
\tau \sum_{m=1}^M \norm{\nabla \vphihm}{L^4}^{\frac{8}d}
&\leq C_0(T+1)C(\vep), 
	\label{eq4.11} 
	\\
\tau \sum_{m=1}^M \norm{\nab\phm}{L^2}^{\frac{4(6-d)}{12-d}}
&\leq C_0(T+1)C(\vep), 
	\label{eq4.12}
	\\
\tau \sum_{m=1}^M \norm{d_t\vphihm}{(H^1)^*}^{\frac{4(6-d)}{12-d}} 
&\leq C_0(T+1)C(\vep) ,
	\label{eq4.13}
	\end{align}
for some $\vep,h$, and $\tau$-independent constant $C_0=C_0(E_h^0)>0$ 
and some $h$ and $\tau$-independent constant $C(\vep)>0$ that grows like $\vep^{-r}$, for
some $r\in\mathbb{Z}^+$, as $\vep\to 0$.
\end{lemma}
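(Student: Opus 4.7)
The plan is to derive the five estimates in the stated order, each cascading into the next, by testing (\ref{eq3.2})--(\ref{eq3.4}) with carefully chosen discrete functions and combining the output with the $\vep$-uniform bounds of Lemma~\ref{lem4.2} and suitable interpolation inequalities. For (\ref{eq4.10}), I test (\ref{eq3.4}) with $\psi_h=-\Delta_h\vphihm$ and use the defining identity $(\Delta_h v_h, w_h)=-(\nabla v_h,\nabla w_h)$ with $w_h=\Delta_h\vphihm$ to extract $\vep\norm{\Delta_h\vphihm}{L^2}\le\norm{\mu_h^m-\vep^{-1}f_h^m}{L^2}$. I then split $\mu_h^m$ into its mean (computed from (\ref{eq3.4}) with $\psi_h\equiv 1$ as $\overline{\mu_h^m}=\vep^{-1}(f_h^m,1)/|\Ome|$) and its zero-mean part (bounded by $C\norm{\nabla\mu_h^m}{L^2}$ via Poincar\'e); using (\ref{LinfH1phi-discrete}) together with $H^1\hookrightarrow L^6$ (valid for $d\le 3$), both $|\overline{\mu_h^m}|$ and $\norm{f_h^m}{L^2}$ are bounded pointwise in $m$ by some $C(\vep)$, and squaring and time-summing with (\ref{eq4.4}) then yields (\ref{eq4.10}).

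Estimates (\ref{eq4.11}) and (\ref{eq4.10a}) follow from Gagliardo--Nirenberg-type interpolations of the form
\begin{equation*}
\norm{\nabla v_h}{L^4}\le C\norm{\Delta_h v_h}{L^2}^{d/4}\norm{\nabla v_h}{L^2}^{1-d/4}+\text{l.o.t.}, \qquad \norm{v}{L^\infty}\le C\norm{\nabla v}{L^4}^{2/(6-d)}\norm{v}{L^{2d}}^{(4-d)/(6-d)}+\text{l.o.t.},
\end{equation*}
where $2d\in\{4,6\}$ and both $L^4$ and $L^6$ norms of $\vphihm$ are uniformly bounded ($L^4$ from the $(F(\vphihm),1)$ term in the energy, and $L^6$ from $H^1\hookrightarrow L^6$ combined with (\ref{LinfH1phi-discrete})). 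Raising the first inequality to the power $8/d$ gives $\norm{\nabla v_h}{L^4}^{8/d}\le C\norm{\Delta_h v_h}{L^2}^{2}\norm{\nabla v_h}{L^2}^{(8-2d)/d}$; time-summing and combining (\ref{eq4.10}) with (\ref{LinfH1phi-discrete}) yields (\ref{eq4.11}). Raising the second to $p=4(6-d)/d$ produces $\norm{\nabla v}{L^4}^{8/d}\norm{v}{L^{2d}}^{4(4-d)/d}$ on the right; the $L^{2d}$-factor is uniformly bounded as noted, so applying (\ref{eq4.11}) closes (\ref{eq4.10a}).

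For (\ref{eq4.12}) and (\ref{eq4.13}) the common mechanism is: derive a pointwise-in-$m$ decomposition of the form $\norm{X^m}{\star}\le A^m+\norm{\vphi_h^{m-1}}{L^\infty}B^m$, raise to the sub-$L^2$ power $p=4(6-d)/(12-d)<2$, and apply discrete H\"older in time with conjugate exponents $r=2/(2-p)$ and $r'=2/p$. For (\ref{eq4.12}), (\ref{eq3.2}) gives $\nabla\phm=-\buhm-\gamma\vphi_h^{m-1}\nabla\mu_h^m$ hence $\norm{\nabla\phm}{L^2}\le\norm{\buhm}{L^2}+\gamma\norm{\vphi_h^{m-1}}{L^\infty}\norm{\nabla\mu_h^m}{L^2}$; for (\ref{eq4.13}), testing (\ref{eq3.3}) against $\mathcal{Q}_h\nu$ for $\nu\in H^1(\Ome)$ and using the $H^1$-stability of the $L^2$-projection $\mathcal{Q}_h$ yields $\norm{d_t\vphihm}{(H^1)^*}\le C(\vep\norm{\nabla\mu_h^m}{L^2}+\norm{\vphi_h^{m-1}}{L^\infty}\norm{\buhm}{L^2})$. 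Since $pr=4(6-d)/d$ exactly matches the exponent of $\norm{\vphi_h^{m-1}}{L^\infty}$ controlled by (\ref{eq4.10a}), while $pr'=2$ is controlled by (\ref{eq4.4}), both estimates follow.

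The main obstacle I anticipate is rigorously justifying the two Gagliardo--Nirenberg-type inequalities above in the \emph{discrete} setting, since $\Delta_h v_h$ is not the pointwise Laplacian of $v_h\in V_h$ (which in general is not in $H^2(\Ome)$). This is typically handled either by a duality/Ritz-projection argument relating $\Delta_h v_h$ to a continuous elliptic lift or by invoking known discrete Gagliardo--Nirenberg/inverse estimates valid on quasi-uniform meshes. Once the discrete interpolation is in hand, the remainder of the argument is systematic exponent tracking via Young's and H\"older's inequalities.
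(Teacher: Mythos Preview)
Your proposal is correct and follows the same overall architecture as the paper's proof: test \eqref{eq3.4} against $\Delta_h\vphihm$ to get \eqref{eq4.10}, invoke discrete Gagliardo--Nirenberg inequalities for \eqref{eq4.11} and \eqref{eq4.10a}, and combine the pointwise identity $\nabla\phm=-\buhm-\gamma\vphi_h^{m-1}\nabla\mu_h^m$ (respectively, \eqref{eq3.3} tested against $\mathcal{Q}_h\nu$) with a Young/H\"older split in time for \eqref{eq4.12} and \eqref{eq4.13}. The exponent bookkeeping you set up, $pr=4(6-d)/d$ and $pr'=2$, exactly matches the paper's Young-inequality split.

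There are two genuine, if modest, differences worth noting. First, for \eqref{eq4.10} you bound $\norm{f_h^m}{L^2}$ directly via $\norm{(\vphihm)^3}{L^2}=\norm{\vphihm}{L^6}^3\le C\norm{\vphihm}{H^1}^3\le C(\vep)$, uniformly in $m$; the paper instead writes $f_h^m=\vphihm\bigl((\vphihm)^2-1\bigr)+(\vphihm-\vphi_h^{m-1})$, bounds $\norm{f_h^m}{L^2}^2\le C\vep\norm{\vphihm}{L^\infty}^2+\norm{\vphihm-\vphi_h^{m-1}}{L^2}^2$ using $\norm{(\vphihm)^2-1}{L^2}^2\le C\vep$, and then closes via a bootstrap through the discrete Gagliardo--Nirenberg inequality $\norm{\vphihm}{L^\infty}\le C\norm{\Delta_h\vphihm}{L^2}^{d/(2(6-d))}\norm{\vphihm}{L^6}^{3(4-d)/(2(6-d))}+C\norm{\vphihm}{L^6}$. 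Your route is shorter and avoids the bootstrap, at the cost of a slightly worse power of $\vep^{-1}$ in $C(\vep)$, which the statement permits. Second, you reverse the order of \eqref{eq4.10a} and \eqref{eq4.11}: you first get \eqref{eq4.11} from the discrete interpolation $\norm{\nabla v_h}{L^4}\le C\norm{\Delta_h v_h}{L^2}^{d/4}\norm{\nabla v_h}{L^2}^{1-d/4}$ (the same one the paper uses), and then derive \eqref{eq4.10a} from the \emph{continuous} inequality $\norm{v}{L^\infty}\le C\norm{\nabla v}{L^4}^{2/(6-d)}\norm{v}{L^{2d}}^{(4-d)/(6-d)}+C\norm{v}{L^{2d}}$, which applies directly to $v_h\in H^1$. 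The paper instead gets \eqref{eq4.10a} straight from the discrete $L^\infty$--$\Delta_h$ inequality above. Your ordering has the small advantage that only one \emph{discrete} Gagliardo--Nirenberg estimate is needed; the paper simply cites \cite{hr1} for both.
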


\begin{proof}
Setting $\psi_h=\Del_h \vphihm$ in \eqref{eq3.4}, using the 
definition of $\Del_h\vphihm$, and the Schwarz inequality we get
\begin{align*}
\vep \norm{\Del_h \vphihm}{L^2}^2 
&= - \vep \bigl(\nab\vphihm,\nab\Del_h \vphihm\bigr) \\ 
&=-\bigl(\mu_h^m,\Del_h \vphihm\bigr)
+ \frac{1}{\vep}\bigl(f_h^m, \Del_h \vphihm \bigr) \\
&\le\bigl(\nab\mu_h^m,\nab\vphihm\bigr)
+ \frac{1}{\vep}\left(\frac{\vep^2}{2} \norm{\Del_h \vphihm}{L^2}^2 
+ \frac{1}{2\vep^2} \norm{f_h^m}{L^2}^2 \right) \\
&\leq \frac12\norm{\nab\mu_h^m}{L^2}^2 +\frac12 \norm{\nab\vphihm}{L^2}^2  
+\frac{\vep}2 \norm{\Del_h \vphihm}{L^2}^2 
+\frac{1}{2\vep^3} \norm{f_h^m}{L^2}^2.
\end{align*}
Hence,
\begin{align}\label{eq4.13a}
\vep \norm{\Del_h \vphihm}{L^2}^2 \leq \norm{\nab\mu_h^m}{L^2}^2  
+\norm{\nab\vphihm}{L^2}^2 + \frac{1}{\vep^3}\norm{f_h^m}{L^2}^2.
\end{align}

To bound $\norm{f_h^m}{L^2}^2$, we write
\[
f_h^m:=(\vphihm)^3-\vphi_h^{m-1} = \vphihm \bigl((\vphihm)^2-1\bigr)
+ \vphihm-\vphi_h^{m-1}.
\]
Then by \eqref{eq4.3} we have
\begin{align*} 
\|f_h^m\|_{L^2}^2 &\leq 2\|\vphihm\|_{L^\infty}^2 \|(\vphihm)^2-1\|_{L^2}^2
+\|\vphihm-\vphi_h^{m-1}\|_{L^2}^2 \\
&= 8\|\vphihm\|_{L^\infty}^2 \left(F(\vphihm),1\right)  
+\|\vphihm-\vphi_h^{m-1}\|_{L^2}^2  \\
&\leq C\vep \|\vphihm\|_{L^\infty}^2 +  \|\vphihm-\vphi_h^{m-1}\|_{L^2}^2.
\end{align*}
We now appeal to the following discrete Gagliardo-Nirenberg
inequality (cf. \cite{hr1} and \cite{adams,elliott86}):
	\begin{align}
	\label{eq4.13b}
\|\vphihm\|_{L^\infty} \leq C\|\Delta_h \vphi_h^m\|_{L^2}^{\frac{d}{2(6-d)}}
\,\|\vphihm\|_{L^6}^{\frac{3(4-d)}{2(6-d)}} + C\|\vphihm\|_{L^6} \qquad (d=2,3)
	\end{align}
and get
	\begin{align}
	\label{eq4.13c}
\|f_h^m\|_{L^2}^2 &\leq C\vep \|\Delta_h \vphi_h^m\|_{L^2}^{\frac{d}{6-d}}\, 
\|\vphihm\|_{L^6}^{\frac{3(4-d)}{6-d}}+ C\vep\|\vphihm\|_{L^6}^2
+ \|\vphihm-\vphi_h^{m-1}\|_{L^2}^2 
	\\
&\leq \vep\left( \frac{\vep^3}{2}\|\Delta_h \vphi_h^m\|_{L^2}^2 +C\vep^{\frac{-d}{4-d}} \norm{\vphihm}{L^6}^2 \right) + C\vep\norm{\vphihm}{L^6}^2
+ \norm{\vphihm-\vphi_h^{m-1}}{L^2}^2
	\nonumber
	\\	
&\leq \frac{\vep^4}2 \|\Delta_h \vphi_h^m\|_{L^2}^2
+ C\left(\vep^{\frac{4-2d}{4-d}}+\vep\right)\norm{\vphihm}{L^6}^2 
+ \norm{\vphihm-\vphi_h^{m-1}}{L^2}^2
	\nonumber  
	\\	
&\leq \frac{\vep^4}2 \|\Delta_h \vphi_h^m\|_{L^2}^2
+ C\left(\vep^{\frac{4-2d}{4-d}}+\vep\right)\norm{\vphihm}{H^1}^2 
+ \norm{\vphihm-\vphi_h^{m-1}}{L^2}^2,
	\nonumber  
	\end{align}
where we used the Sobolev embedding $H^1(\Ome)\hookrightarrow L^6(\Ome)$ 
for $d=2$, $3$ in the last step.  Then \eqref{eq4.10} follows from applying 
the operator $\tau\sum_{m=1}^M$ to \eqref{eq4.13a} and using \eqref
{eq4.13c}, \eqref{LinfH1phi-discrete}, and \eqref{eq4.4}.
\eqref{eq4.10a} is an immediate consequence of \eqref{eq4.13b} and 
\eqref{eq4.10}.

To prove \eqref{eq4.11}, we recall another discrete Gagliardo-Nirenberg
inequality (cf. \cite{hr1} and \cite{adams,elliott86}):
\begin{align} \label{eq4.13d}
\norm{\nabla \nu_h}{L^4}  
\leq C \norm{\nabla \nu_h}{L^2}^{\frac{4-d}{4}} 
\norm{\Delta_h \nu_h}{L^2}^{\frac{d}{4}}
+C\norm{\nabla \nu_h}{L^2} \qquad \forall \nu_h \in V_h,\quad d=2,\, 3.
\end{align}
It follows from the above inequality and estimates \eqref{eq4.3} 
and \eqref{eq4.10} that
	\begin{eqnarray}
\tau\sum_{m=1}^M \norm{\nabla \vphihm}{L^4}^{\frac{8}{d} } &\leq& C\, \tau\sum_{m=1}^M\|\nab\vphihm\|_{L^2}^{\frac{2(4-d)}{d}}\, \|\Delta_h\vphihm\|_{L^2}^2 + C\, \tau\sum_{m=1}^M \norm{\nabla \vphihm}{L^2}^{\frac{8}{d}}
	\nonumber
	\\
&\leq& \frac{C}{\vep^{\frac{4-d}{d}}}\, \tau\sum_{m=1}^M  \|\Delta_h\vphihm\|_{L^2}^2 + \frac{C}{\vep^{\frac{4}{d}}}\, T 
	\end{eqnarray}
which proves \eqref{eq4.11}.

Inequality \eqref{eq4.12} follows from the estimate
	\begin{eqnarray}
\norm{\nab \phm}{L^2}^{\frac{4(6-d)}{12-d}} &\le& C\norm{\buhm}{L^2}^{\frac{4(6-d)}{12-d}}+ C \norm{\vphi_h^{m-1}}{L^\infty}^{\frac{4(6-d)}{12-d}}\norm{\nab\mu_h^m}{L^2}^{\frac{4(6-d)}{12-d}}
	\nonumber
	\\
&\le& C\norm{\buhm}{L^2}^2+ C \norm{\vphi_h^{m-1}}{L^\infty}^{\frac{4(6-d)}{d}} + C\norm{\nab\mu_h^m}{L^2}^2
	\nonumber
	\end{eqnarray}
and estimates \eqref{eq4.4} and \eqref{eq4.10a}.

Now, let $\mathcal{Q}_h$ denote the standard $L^2$ projection 
operator into $V_h$ (cf. \cite{bs08,ciarlet}).  For any $\nu\in H^1(\Ome)$, 
setting $\nu_h=\mathcal{Q}_h\nu$ in \eqref{eq3.3}, we get
\begin{align*}
\prodt{d_t\vphihm}{\nu}&= \prodt{d_t\vphihm}{\mathcal{Q}_h\nu} 
	\\
&=-\vep\bigl(\nab \mu_h^m,\nab \mathcal{Q}_h\nu \bigr)  
+ \bigl(\vphi_h^{m-1}\buhm, \nab \mathcal{Q}_h\nu \bigr) 
	\\
&\leq \Bigl[\vep\|\nab\mu_h^m\|_{L^2} 
+ \|\vphi_h^{m-1}\|_{L^\infty} \|\bu_h^m\|_{L^2} \Bigr]\,
\|\nab \mathcal{Q}_h\nu\|_{L^2} 
	\\
&\leq C\Bigl[\vep\|\nab\mu_h^m\|_{L^2} 
+\|\vphi_h^{m-1}\|_{L^\infty} \|\bu_h^m\|_{L^2} \Bigr]\, \|\nab \nu\|_{L^2},
\end{align*}
where we have used the $H^1$ stability of the $L^2$ projection 
$\mathcal{Q}_h$ (cf. \cite{bs08,ciarlet}) to get the last inequality.
\eqref{eq4.13} now follows immediately from the above inequality and
estimates  \eqref{eq4.10a} and  \eqref{eq4.4}. 
The proof is complete.
\end{proof}


\medskip
Next, let $\vphi_{h,\tau}(x,t)$ denote the piecewise linear interpolant 
(in $t$) of the fully discrete solution $\{\vphihm\}$, that is,
\begin{equation}
\vphi_{h,\tau}(\, \cdot\, ,t) := \frac{t-t_{m-1}}{\tau}\vphihm(\, \cdot\, ) 
+\frac{t_m-t}{\tau} \vphi_h^{m-1}(\, \cdot\, )
\qquad\forall t\in[t_{m-1}, t_m], 
\label{eq4.17}
\end{equation}
for $1\leq m\leq M$. Let $\op_{h,\tau}(x,t)$, $\obu_{h,\tau}(x,t)$, 
 $\omu_{h,\tau}(x,t)$, $\ovphi_{h,\tau}(x,t)$, and $\oovphi_{h,\tau}(x,t)$, 
denote the piecewise constant extensions of $\{\phm\}$, $\{\buhm\}$,  
$\{\mu_h^m\}$, and $\{\vphihm\}$, respectively, defined as follows
\begin{alignat}{2}
\op_{h,\tau}(\cdot,t)&:=\phm &&\qquad \forall  t\in[t_{m-1}, t_m],\quad
1\leq m\leq M, \label{eq4.18} \\
\obu_{h,\tau}(\cdot,t)&:=\buhm &&\qquad \forall  t\in[t_{m-1}, t_m],\quad
1\leq m\leq M, \label{eq4.19} \\
\omu_{h,\tau}(\cdot,t)&:=\mu_h^m &&\qquad \forall  t\in[t_{m-1},
t_m],\quad 1\leq m\leq M,  \label{eq4.22} \\
\ovphi_{h,\tau}(\cdot,t)&:=\vphihm &&\qquad \forall t\in[t_{m-1},
t_m],\quad 1\leq m\leq M, \label{eq4.20} \\
\oovphi_{h,\tau}(\cdot,t)&:=\vphi_h^{m-1} &&\qquad \forall  t\in[t_{m-1},
t_m],\quad 1\leq m\leq M. \label{eq4.21}
\end{alignat}
We remark that $\vphi_{h,\tau}(x,t)$ is a continuous piecewise polynomial 
function in space and time, $\op_{h,\tau}(x,t)$, $\obu_{h,\tau}(x,t)$, 
$\omu_{h,\tau}(x,t)$, and  $\ovphi_{h,\tau}(x,t)$ are right continuous at 
the nodes $\{t_m\}$, and $\oovphi_{h,\tau}(x,t)$ is left continuous at 
the nodes $\{t_m\}$.

The main result of this section is the following convergence theorem.

\begin{theorem} \label{thm4.1}
Let $\Omega\subset\mathbb{R}^d$ ($d=2, 3$) be a
polygonal or polyhedral domain. For each fixed $\vep>0$, suppose that 
$\Jc_\vep(\vphi_{0h})\le C_0<\infty$, where $C_0$ is independent of $h$, and 
\begin{displaymath}
\lim_{h\to 0} \norm{\vphi_{0h}-\vphi_0^\vep}{L^2}=0.
\end{displaymath}
Then the sequence $\{(\op_{h,\tau},\obu_{h,\tau},\omu_{h,\tau},
\ovphi_{h,\tau})\}$ has an accumulation point 
$(p^\vep, \bu^\vep,\mu^\vep,\vphi^\vep)$ with
$\bu^\vep=-\nab p^\vep-\gamma \vphi^\vep\nab \mu^\vep$,
and $(p^\vep,\mu^\vep,\vphi^\vep)$ is a weak solution 
to problem \eqref{eq2.3}--\eqref{eq2.5}.
\end{theorem}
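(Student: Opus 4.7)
The plan is to apply the standard compactness method for parabolic-type PDEs, using the \emph{a priori} estimates derived in Lemmas~\ref{lem4.2} and~\ref{lem4.3} to extract a convergent subsequence from the family of interpolants, and then to pass to the limit in the weak form of the scheme.

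First I would rewrite the fully discrete scheme \eqref{eq3.2}--\eqref{eq3.4} in terms of the interpolants defined in \eqref{eq4.17}--\eqref{eq4.21}. Observing that $\partial_t \vphi_{h,\tau}$ equals $d_t\vphihm$ on each subinterval $(t_{m-1},t_m)$, the scheme becomes, for every $t\in (0,T)$,
\begin{align*}
\bigl(\nab\op_{h,\tau}+\gamma \oovphi_{h,\tau}\nab\omu_{h,\tau},\nab q_h\bigr) &=0, \\
\prodt{\partial_t\vphi_{h,\tau}}{\nu_h} +\vep\bigl(\nab \omu_{h,\tau},\nab\nu_h \bigr) -\prodt{\oovphi_{h,\tau}\obu_{h,\tau}}{\nab \nu_h} &= 0, \\
\prodt{\omu_{h,\tau}}{\psi_h}-\vep \prodt{\nab\ovphi_{h,\tau}}{\nab\psi_h} -\frac{1}{\vep} \prodt{\ovphi_{h,\tau}^3-\oovphi_{h,\tau}}{\psi_h} &=0,
\end{align*}
for all $(q_h,\nu_h,\psi_h)\in W_h\times V_h\times V_h$. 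Next, the uniform bounds from Lemmas~\ref{lem4.2} and~\ref{lem4.3} translate into the following continuous-in-time estimates (for fixed $\vep>0$, uniform in $h,\tau$): $\vphi_{h,\tau}, \ovphi_{h,\tau}, \oovphi_{h,\tau}$ are bounded in $L^\infty(0,T;H^1(\Omega))$; $\ovphi_{h,\tau}$ is bounded in $L^2(0,T;H^2_h(\Omega))$ via \eqref{eq4.10} and in $L^{4(6-d)/d}(0,T;L^\infty(\Omega))$ via \eqref{eq4.10a}; $\omu_{h,\tau}$ is bounded in $L^2(0,T;H^1(\Omega))$; $\op_{h,\tau}$ in $L^{4(6-d)/(12-d)}(0,T;H^1(\Omega))$; $\obu_{h,\tau}$ in $L^2(0,T;\bL^2(\Omega))$; and $\partial_t\vphi_{h,\tau}$ in $L^{4(6-d)/(12-d)}(0,T;(H^1(\Omega))^*)$.

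Passing to a subsequence (not relabeled), standard weak/weak-$*$ compactness gives weak limits $p^\vep,\mu^\vep,\vphi^\vep,\bu^\vep$ in the respective spaces. Then I would invoke the Aubin-Lions-Simon lemma \cite{simon85}: the embedding $H^1(\Omega)\hookrightarrow L^p(\Omega)$ (for any $p<\infty$ if $d=2$, $p<6$ if $d=3$) is compact, and together with the estimate on $\partial_t\vphi_{h,\tau}$ in a weaker negative space this yields strong convergence of $\vphi_{h,\tau}$ in $L^2(0,T;L^p(\Omega))$ and, after further extraction, a.e.\ in $\Omega_T$. The key step is then to verify that $\ovphi_{h,\tau},\oovphi_{h,\tau}\to \vphi^\vep$ in the same sense, which follows from the identities $\ovphi_{h,\tau}-\vphi_{h,\tau} = \tfrac{t-t_{m-1}}{\tau}(\vphihm-\vphi_h^{m-1})$ on $(t_{m-1},t_m)$ (and analogously for $\oovphi_{h,\tau}-\vphi_{h,\tau}$) combined with the $\tau$-summed bound \eqref{eq4.5}, giving $\|\ovphi_{h,\tau}-\vphi_{h,\tau}\|_{L^2(L^2)}^2 \leq C\tau \to 0$. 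With strong $L^2(L^2)$ (and a.e.) convergence of the phase variable, the cubic nonlinearity $\ovphi_{h,\tau}^3$ converges strongly in $L^1(\Omega_T)$ by Vitali, using the $L^4(L^\infty)$-type bound for equi-integrability. The products $\oovphi_{h,\tau}\nabla\omu_{h,\tau}$ and $\oovphi_{h,\tau}\obu_{h,\tau}$ can be handled by combining strong convergence of $\oovphi_{h,\tau}$ with weak convergence of $\nabla\omu_{h,\tau}$ and $\obu_{h,\tau}$ in $L^2(L^2)$; this is the main obstacle, and I would take care to keep the sum $\nabla \op_{h,\tau}+\gamma\oovphi_{h,\tau}\nabla\omu_{h,\tau}=-\obu_{h,\tau}$ together (as Remark~\ref{rem2.1} suggests) rather than passing to the limit in the two summands separately.

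To identify the limit as a weak solution, I would fix arbitrary $q,\nu,\psi \in C^\infty(\overline{\Omega})$ and $\eta\in C_c^\infty((0,T))$, choose the finite element test functions to be the standard interpolants $q_h=\Pi_h q$ etc., multiply each of the three scheme equations by $\eta(t)$ and integrate over $(0,T)$. Approximation properties of $\Pi_h$ together with the weak/strong convergences just established allow passage to the limit, yielding the weak formulation \eqref{eq2.3}--\eqref{eq2.5} tested against $\eta(t)\cdot q$ etc.; density of such tensor products in $L^{q'}(0,T;H^1(\Omega))$ finishes the spatial-temporal test class. Finally, the initial condition $\vphi^\vep(\cdot,0)=\vphi_0^\vep$ is recovered from the hypothesis $\vphi_{0h}\to \vphi_0^\vep$ in $L^2$, the bound $\vphi_{h,\tau}\in C^0([0,T];L^2(\Omega))$, and the continuity $\vphi^\vep\in C^0([0,T];L^2(\Omega))$ guaranteed by Aubin-Lions as in Remark~\ref{rem2.1}. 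The regularity properties listed in Definition~\ref{weak_form} are inherited from the uniform bounds by weak lower semicontinuity. The main technical difficulty throughout is the simultaneous passage to the limit in the three coupled nonlinearities together with the handling of the three distinct temporal interpolants; once their mutual strong-convergence equivalence is established, the remainder reduces to routine compactness arguments.
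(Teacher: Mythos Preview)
Your proposal is correct and follows essentially the same route as the paper's proof: translate the uniform estimates of Lemmas~\ref{lem4.2} and~\ref{lem4.3} into bounds on the interpolants, extract weak/weak-$*$ limits, use Aubin--Lions--Simon for strong $L^2(L^2)$ convergence of the phase variable, show the three temporal interpolants $\vphi_{h,\tau},\ovphi_{h,\tau},\oovphi_{h,\tau}$ share the same limit via \eqref{eq4.5}, and then pass to the limit in the scheme tested against $\eta(t)$ times a finite element interpolant of a smooth spatial test function. The only cosmetic differences are that the paper applies Aubin--Lions directly to the piecewise-constant extension $\ovphi_{h,\tau}$ (using Simon's translation criterion) rather than to the piecewise-linear $\vphi_{h,\tau}$, and it takes $\eta\in C^0([0,T])$ rather than $C_c^\infty((0,T))$; neither change affects the argument.
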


\begin{proof} 
We divide the proof into two steps.

{\em Step 1: Extracting convergent subsequences.} The estimates of 
Lemmas \ref{lem4.2} and \ref{lem4.3} immediately give the following 
(uniform in $h$ and $\tau$) estimates:
	\begin{eqnarray}
 \norm{\nab \ovphi_{h,\tau} }{L^\infty(L^2)} +\norm{\ovphi_{h,\tau}^2-1 }{L^\infty(L^2)} &\leq& C,
	\label{eq4.23}
	\\
\norm{\obu_{h,\tau} }{L^2(L^2)}+\norm{\nab \omu_{h,\tau}}{L^2\left(L^2\right)}  
&\leq& C, 
	\label{eq4.24}
	\\
\norm{\nab \op_{h,\tau}}{L^2\left(L^{\frac32}\right)} + \norm{\nab \op_{h,\tau}}{L^\sigma(L^2)} &\leq& C, 
	\label{eq4.25}
	\\
\norm{\ovphi_{h,\tau}}{L^\beta\left(L^\infty\right)} 
&\leq& C, 
	\label{eq4.25a}
	\\
\norm{\nab\ovphi_{h,\tau}}{L^{\frac{8}d}\left(L^4\right)}
&\leq& C,
	\label{eq4.25b}
	\\ 
\norm{(\vphi_{h,\tau})_t}{L^2\left(\left(W^{1,3}\right)^*\right)} +\norm{(\vphi_{h,\tau})_t }{L^\sigma\left(\left(H^1\right)^*\right)} 
&\leq& C, 
	\label{eq4.26}
	\end{eqnarray}
where $\beta := {\frac{4(6-d)}{d}}\ge 4$ and $\sigma :=\frac{4(6-d)}{12-d} \ge \frac{4}{3}$.  Note, we have suppressed the dependences of the constants on $T$ and $\vep$ above. $\{\nab  \op_{h,\tau}\}$ is uniformly (with respect to $h$ and $\tau$)
integrable in $L^\sigma\left((0,T); L^2(\Ome)\right)$ and 
$\{(\vphi_{h,\tau})_t\}$ is uniformly integrable in 
$L^\sigma\left((0,T);(H^1)^*\right)$. 

Then there exists a convergent subsequence of 
$\{(\op_{h,\tau},\obu_{h,\tau},\omu_{h,\tau},\vphi_{h,\tau})\}$ 
(still denoted by the same symbols) and a quadruple 
$(p^\vep,\bu^\vep,\mu^\vep,\vphi^\vep)$ such that
\begin{alignat*}{2}
&p^\vep\in L^\sigma\left((0,T); H^1(\Ome)\cap L^2_0(\Ome)\right),
\qquad &&\bu^\vep\in L^2\left((0,T);\bL^2(\Ome)\right), \\
&\vphi^\vep\in L^\infty\left((0,T);H^1(\Ome)\right)
\cap L^{4d}\left((0,T);L^\infty(\Ome)\right),
\qquad && \vphi^\vep\in L^{\frac{8}d}\left((0,T);W^{1,4}(\Ome)\right), \\
&\vphi^\vep_t \in L^\sigma\left((0,T);(H^1(\Ome))^*\right)
\cap L^2\left((0,T);(W^{1,3}(\Ome))^*\right), \qquad 
&& \mu^\vep\in L^2\left((0,T);H^1(\Ome)\right),
\end{alignat*}
and
\begin{align}
\op_{h,\tau} \overset{h,\tau\searrow 0}\longrightarrow p^\vep 
&\,\mbox{weakly in } L^\sigma\left((0,T); H^1(\Ome)\cap L^2_0(\Ome)\right),
\label{eq4.29} \\
\obu_{h,\tau}\overset{h,\tau\searrow 0}{\longrightarrow} \bu^\vep 
&\,\mbox{weakly in } L^2\left((0,T);\bL^2(\Ome)\right), \label{eq4.27} \\
\omu_{h,\tau} \overset{h,\tau\searrow 0}\longrightarrow
\mu^\vep &\,\mbox{weakly in } L^2\left((0,T);H^1(\Ome)\right),  \label{eq4.31} \\
\ovphi_{h,\tau}\overset{h,\tau\searrow 0}\longrightarrow \vphi^\vep 
&\,\mbox{weakly$\star$ in } L^\infty\left((0,T);H^1(\Ome)\right)
\cap L^\beta\left((0,T);L^\infty(\Ome)\right),
\label{eq4.28} \\
&\,\mbox{strongly in } L^2\left((0,T);L^2(\Ome)\right), \nonumber \\
&\,\mbox{weakly in } H^1\left((0,T);(W^{1,3}(\Ome))^*\right)
\cap W^{1,\sigma}\left((0,T);(H^1(\Ome))^*\right), \nonumber \\
&\,\mbox{weakly in } L^{\frac{8}d}\left((0,T);W^{1,4}(\Ome)\right). \nonumber
\end{align}
We have used Aubin-Lions lemma (cf. \cite{simon85}) to conclude \eqref{eq4.28}.

From \eqref{eq4.5} we also have
\begin{eqnarray*}
\norm{\vphi_{h,\tau}-\ovphi_{h,\tau}}{L^2(H^1)}^2
&=&\sum_{m=1}^M \norm{\vphihm-\vphi_h^{m-1}}{H^1}^2 
\int_{t_{m-1}}^{t_m} \Bigl(\frac{t-t_{m-1}}{\tau}\Bigr)^2 dt \\
&=&\frac{\tau}3 \sum_{m=1}^M \norm{\vphihm-\vphi_h^{m-1}}{H^1}^2
\overset{\tau\searrow 0}{\longrightarrow} 0.
\end{eqnarray*}
Hence, $\{\vphi_{h,\tau}\}$, $\{\ovphi_{h,\tau}\}$,
and $\{\oovphi_{h,\tau}\}$ converge to the same limit as $h,\tau\rightarrow 0$. 

{\em Step 2: Passing to the limit.} We now want to pass to the 
limit in \eqref{eq3.2}--\eqref{eq3.5} and to show that 
$(p^\vep,\mu^\vep,\vphi^\vep)$ is a weak solution to problem 
\eqref{eq2.3}--\eqref{eq2.5} with the initial data
$\vphi^\vep(0)=\vphi_0^\vep$.  To this end, we rewrite 
\eqref{eq3.2}--\eqref{eq3.5} as
\begin{alignat}{2}
\bigl(\obu_{h,\tau},\nab q_h\bigr) &=0 &&\qquad\forall q_h\in W_h,
	\label{eq4.35}
	\\
\prodt{(\vphi_{h,\tau})_t}{\nu_h} +\vep\bigl(\nab\omu_{h,\tau},\nab\nu_h\bigr) 
-\bigl(\oovphi_{h,\tau}\obu_{h,\tau},\nab\nu_h\bigr) &=0
&&\qquad\forall\nu_h\in V_h, 
	\label{eq4.36}
	\\
\prodt{\omu_{h,\tau}}{\psi_h} -\vep\prodt{\nab \ovphi_{h,\tau}}{\nab \psi_h}
-\frac{1}{\vep}\prodt{\of_{h,\tau}}{\psi_h} &=0 &&\qquad\forall\psi_h\in V_h,
	\label{eq4.37}
\end{alignat}
where $\of_{\vep,h,\tau}$ denotes the right continuous
constant extension of $\{f_h^m\}$. 

For any $\eta\in C^0([0,T])$, multiplying \eqref{eq4.35}--\eqref{eq4.37}
by $\eta$, respectively, and integrating the resulting equations
in $t$ from $0$ to $T$ we get
\begin{align}
\int_0^T \bigl(\obu_{h,\tau}, \nab q_h\bigr)\eta(t)\, dt =0
& \quad\forall q_h\in W_h, 
	\label{eq4.38}
	\\
\int_0^T \Bigl\{ \prodt{(\vphi_{h,\tau})_t}{\nu_h} 
+\vep\bigl(\nab\omu_{h,\tau},\nab\nu_h\bigr)  \hskip 1.0in &  \label{eq4.39}\\
-\bigl(\oovphi_{h,\tau}\obu_{h,\tau},\nab\nu_h\bigr)\Bigr\}\eta(t)\, dt =0
&\quad\forall \nu_h\in V_h, 
	\nonumber
	\\
\int_0^T \left\{ \prodt{\omu_{h,\tau}}{\psi_h}
-\vep\prodt{\nab \ovphi_{h,\tau}}{\nab \psi_h}
-\frac{1}{\vep}\prodt{\of_{h,\tau}}{\psi_h} \right\}\eta(t)\, dt
=0  &\quad\forall \psi_h\in V_h.  
	\label{eq4.40}
\end{align}

For any $(q,\nu,\psi)\in [H^1(\Ome)\cap C^1(\Ome)]^3$, let $(q_h,\nu_h,\psi_h) 
\in W_h\times V_h\times V_h$ be the standard finite  element (nodal)
interpolations of $(q,\nu,\psi)$ in  \eqref{eq4.38}--\eqref{eq4.40}. Since
\begin{displaymath}
q_h\overset{h\searrow 0}{\longrightarrow} q, \quad \nu_h\overset{h\searrow 0}{\longrightarrow} \nu, \quad \psi_h\overset{h\searrow 0}{\longrightarrow} \psi \quad\mbox{strongly in } H^1(\Ome), 
\end{displaymath}
sending $h,\tau\rightarrow 0$ in \eqref{eq4.38}--\eqref{eq4.40} 
and using \eqref{eq4.27}--\eqref{eq4.31} we get $\vphi^\vep(0)=\vphi^\vep_h$ and
\begin{alignat}{2}
\int_0^T \bigl(\bu^\vep,\nab q\bigr)\eta(t)\, dt &=0 
&&\quad\forall q\in H^1(\Ome), 
	\label{eq4.41}
	\\
\int_0^T \Bigl\{ \dual{\vphi^\vep_t}{\nu} +\vep\bigl(\nab\mu^\vep,\nab\nu\bigr) 
+\bigl(\vphi^\vep\bu^\vep,\nab\nu\bigr)\Bigr\}\eta(t)\, dt &=0 
&&\quad\forall \nu\in H^1(\Ome), 
	\label{eq4.42}
	\\
\int_0^T \Bigl\{ \prodt{\mu^\vep}{\psi} -\vep\prodt{\nab \vphi^\vep}{\nab \psi}
-\frac{1}{\vep}\prodt{f(\vphi^\vep)}{\psi} \Bigr\}\eta(t)\, dt
&=0  &&\quad\forall \psi\in H^1(\Ome).  
	\label{eq4.43}
\end{alignat}

Moreover, from the identity $\buhm=-\nab \phm-\gamma \vphi_h^{m-1}\nab\mu_h^m$ 
we have 
\begin{align*}
\int_0^T \bigl(\obu_{h,\tau}, \nab q\bigr) \eta(t)\, dt
&=-\int_0^T\bigl(\nab \op_{h,\tau}
+\gamma \oovphi_{h,\tau}\nab\omu_{h,\tau},\nab q \bigr) \eta(t)\, dt, \\
\int_0^T \bigl(\ovphi_{h,\tau}\obu_{h,\tau}, \nab \nu\bigr) \eta(t)\, dt
&=-\int_0^T \bigl(\ovphi_{h,\tau}[\nab \op_{h,\tau}
+\gamma \oovphi_{h,\tau}\nab\omu_{h,\tau}],\nab \nu\bigr) \eta(t)\, dt.
\end{align*}
Sending $h,\tau\rightarrow 0$ and using \eqref{eq4.29}--\eqref{eq4.31} yields
\begin{align}
\int_0^T \bigl(\bu^\vep, \nab q\bigr) \eta(t)\, dt
&=-\int_0^T\bigl(\nab p^\vep+\gamma \vphi^\vep\nab\mu^\vep,\nab q\bigr)\eta(t)\,dt,
\label{eq4.44} \\
\int_0^T \bigl(\vphi^\vep\bu^\vep, \nab \nu\bigr) \eta(t)\, dt
&=-\int_0^T \bigl(\vphi^\vep[\nab p^\vep+\gamma \vphi^\vep\nab\mu^\vep],
\nab \nu\bigr) \eta(t)\, dt.  \label{eq4.45}
\end{align}

Combining \eqref{eq4.41}--\eqref{eq4.43} and \eqref{eq4.44}--\eqref{eq4.45} 
we obtain \eqref{eq2.3}--\eqref{eq2.5}, since $C^0[0,T]$ is dense 
in $L^2(0,T)$. Hence, $(p^\vep,\mu^\vep,\vphi^\vep)$ is a weak solution 
to \eqref{eq2.3}--\eqref{eq2.5}.  The proof is complete.
\end{proof}

\begin{corollary} \label{cor-4.1}
The whole sequence 
$\{(\op_{h,\tau},\obu_{h,\tau},\omu_{h,\tau},\ovphi_{h,\tau})\}$ 
converges if weak solutions to problem \eqref{eq2.3}--\eqref{eq2.5} are unique.
\end{corollary}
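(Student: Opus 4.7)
The plan is to derive this from Theorem~\ref{thm4.1} via the standard Urysohn subsequence principle: in each of the topologies listed in the convergence statements \eqref{eq4.27}--\eqref{eq4.31}, if every subsequence of a given sequence admits a further subsequence converging to the same limit $x$, then the whole sequence converges to $x$.

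First, I would fix an arbitrary subsequence of the full family $\{(\op_{h,\tau},\obu_{h,\tau},\omu_{h,\tau},\ovphi_{h,\tau})\}$. The crucial observation is that the a priori bounds collected in Lemmas~\ref{lem4.2} and \ref{lem4.3} depend only on $E_h^0$, $T$, and $\vep$ --- not on how the index $(h,\tau)$ is selected --- so every such subsequence satisfies the same uniform bounds \eqref{eq4.23}--\eqref{eq4.26}. Consequently, the extraction argument in Step~1 of the proof of Theorem~\ref{thm4.1} applies verbatim to yield a further subsequence and a limit quadruple $(p^\vep,\bu^\vep,\mu^\vep,\vphi^\vep)$ with the convergences \eqref{eq4.29}--\eqref{eq4.31}, and the passage to the limit in Step~2 shows that this limit is a weak solution of \eqref{eq2.3}--\eqref{eq2.5} in the sense of Definition~\ref{weak_form} with $\bu^\vep=-\nab p^\vep-\gamma\vphi^\vep\nab\mu^\vep$.

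Next I would invoke the hypothesized uniqueness. Since all sub-subsequential limits obtained by this procedure are weak solutions with the same initial datum $\vphi_0^\vep$, and since weak solutions are assumed unique, they must all coincide with a single quadruple $(p^\vep,\bu^\vep,\mu^\vep,\vphi^\vep)$ that is independent of the chosen subsequence. Finally, I would apply the Urysohn subsequence principle componentwise in the respective topologies --- weak convergence in $L^\sigma(H^1\cap L^2_0)$ for $\op_{h,\tau}$, weak convergence in $L^2(\bL^2)$ for $\obu_{h,\tau}$, weak convergence in $L^2(H^1)$ for $\omu_{h,\tau}$, and weak-$\star$ convergence in $L^\infty(H^1)\cap L^\beta(L^\infty)$ (along with the strong $L^2(L^2)$ convergence) for $\ovphi_{h,\tau}$. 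In each case the target topology is induced by countable families of seminorms on bounded sets (via reflexivity or separable preduals), so the Urysohn principle applies: since every subsequence of $\{(\op_{h,\tau},\obu_{h,\tau},\omu_{h,\tau},\ovphi_{h,\tau})\}$ has a further subsequence converging to the same limit $(p^\vep,\bu^\vep,\mu^\vep,\vphi^\vep)$, the entire family must converge to it.

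The main step requiring care is the topological one: justifying the subsequence principle simultaneously for weak, weak-$\star$, and strong convergences. The cleanest way is to metrize weak and weak-$\star$ convergence on the norm-bounded subsets carved out by the a priori estimates (which is possible because the preduals involved are separable, by Kakutani/Banach-Alaoglu), and then apply the standard metric-space fact. Everything else is bookkeeping once Theorem~\ref{thm4.1} and the uniqueness hypothesis are in hand.
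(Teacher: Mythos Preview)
Your proposal is correct and follows essentially the same approach as the paper: both argue that every subsequence has a further subsequence whose limit is a weak solution, invoke the assumed uniqueness to conclude all such limits coincide, and then appeal (implicitly in the paper, explicitly in your write-up via the Urysohn principle) to conclude convergence of the whole sequence. Your version is more careful about the topological justification (metrizability of weak and weak-$\star$ topologies on bounded sets), which the paper leaves implicit.
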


\begin{proof}
We have shown in the above proof that
$\{(\op_{h,\tau},\omu_{h,\tau},\ovphi_{h,\tau})\}$ has a 
convergent subsequence and its limit $(p^\vep,\mu^\vep,\vphi^\vep)$ 
is a weak solution of \eqref{eq2.3}--\eqref{eq2.5}. 
Moreover, the proof also implies that the limit of
{\em every} convergent subsequence of 
$\{(\op_{h,\tau},\omu_{h,\tau},\ovphi_{h,\tau})\}$ is necessarily a weak 
solution of \eqref{eq2.3}--\eqref{eq2.5}. Hence, by the uniqueness
assumption of weak solutions, the whole sequence 
$\{(\op_{h,\tau},\omu_{h,\tau},\ovphi_{h,\tau})\}$ must
converge to the unique weak solution.
\end{proof}

\begin{remark}\label{rem4.3}
In Theorem \ref{thm4.1} and Corollary \ref{cor-4.1}, $\Omega$ is 
assumed to be a polygonal or polyhedral domain. This assumption is imposed 
only to avoid the technicalities for defining our finite element 
method \eqref{eq3.2}--\eqref{eq3.6}. It is not used or needed in the 
proofs of the theorem and the corollary. By using the standard numerical
integration technique or the approximated boundary technique 
(\emph{i.e.}, to approximate a bounded Lipschitz domain by a sequence of 
polygonal or polyhedral domains) (cf. \cite{ciarlet}), it can be 
proved that the modified finite element methods would also 
possess all the properties proved in Lemmas \ref{lem3.1}, \ref{lem3.2},
\ref{lem3.3}, \ref{lem4.1}, \ref{lem4.2}, and 
Theorem \ref{thm-existence-uniqueness} as well as Lemma \ref{lem4.3}. 
As a result, the conclusions of Theorem \ref{thm4.1} and 
Corollary \ref{cor-4.1} still hold when $\Ome$ is a bounded 
Lipschitz domain.
\end{remark}

From Theorem \ref{thm4.1} and Theorem  \ref{thm2.1} we immediately have

\begin{theorem}\label{thm4.2}
There exists a weak solution to problem \eqref{eq2.3}--\eqref{eq2.5} and
weak solutions are unique in the function class $\mathcal{F}$.
\end{theorem}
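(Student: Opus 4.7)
The plan is to combine the constructive existence result of Theorem \ref{thm4.1} with the uniqueness result of Theorem \ref{thm2.1}. Theorem \ref{thm2.1} already gives uniqueness in $\mathcal{F}$ assuming only $\vphi_0^\vep \in H^1(\Ome)$ with $\Jc_\vep(\vphi_0^\vep) \leq C_0$, so the real content of the argument is to verify that the hypotheses of Theorem \ref{thm4.1} can be met in order to deduce the existence assertion.

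My first step would be to construct a suitable finite-element approximation $\vphi_{0h}\in V_h$ of $\vphi_0^\vep$. A natural choice is the Ritz projection (or a Cl\'ement/Scott-Zhang type quasi-interpolant) of $\vphi_0^\vep$ onto $V_h$. Standard approximation theory then guarantees $\vphi_{0h}\to\vphi_0^\vep$ strongly in $H^1(\Ome)$, which in particular yields the required $L^2$ convergence $\norm{\vphi_{0h}-\vphi_0^\vep}{L^2}\to 0$. To obtain a uniform bound $\Jc_\vep(\vphi_{0h})\le C_0$ independent of $h$, I would use the $H^1$-stability of the chosen projection for the gradient term and the growth estimate $F(\vphi)\le C(1+|\vphi|^4)$ combined with the Sobolev embedding $H^1(\Ome)\hookrightarrow L^4(\Ome)$ (valid for $d=2,3$) for the potential term.

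With these hypotheses verified, Theorem \ref{thm4.1} immediately produces a weak solution $(p^\vep,\mu^\vep,\vphi^\vep)$ of \eqref{eq2.3}--\eqref{eq2.5} as an accumulation point of the finite element sequence $\{(\op_{h,\tau},\omu_{h,\tau},\ovphi_{h,\tau})\}$, and Theorem \ref{thm2.1} then delivers the uniqueness statement within $\mathcal{F}$. I anticipate no substantive technical obstacle here; the proof is essentially a chaining of the two earlier theorems, with the only minor point of care being the construction of a discrete initial datum that simultaneously converges to $\vphi_0^\vep$ in $L^2$ and possesses uniformly bounded Cahn--Hilliard energy. The Ritz projection together with the $H^1\hookrightarrow L^4$ embedding in dimensions two and three resolves this cleanly.
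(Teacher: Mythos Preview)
Your proposal is correct and matches the paper's approach exactly: the paper simply states that Theorem~\ref{thm4.2} follows ``immediately'' from Theorems~\ref{thm4.1} and~\ref{thm2.1}, without even spelling out the construction of $\vphi_{0h}$. Your additional care in verifying the hypotheses of Theorem~\ref{thm4.1} via a Ritz or quasi-interpolant projection (using $H^1$-stability and the embedding $H^1\hookrightarrow L^4$ to control $\Jc_\vep(\vphi_{0h})$) is a reasonable elaboration of what the paper leaves implicit.
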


We conclude this section with a remark on the error estimates
for the solution of the fully discrete scheme (\ref{eq3.2})--(\ref{eq3.5}). 
Using the standard (perturbation) technique as presented 
in \cite{feng07}, it is not hard to prove that the scheme converges
optimally in the energy norm. However, the error constant 
would contain a factor of $\exp\left(\vep^{-2}\right)$. Such an 
error bound is clearly not very useful for small $\vep$. 
A better error bound would only depend on $\vep^{-1}$ in
some low polynomial order (cf. \cite{XA2,XA3}). Deriving such
a polynomial order error bound is an on-going project and the
result will be reported in a forthcoming paper.

\section{Numerical experiments} \label{sec-5}

In this section we provide some numerical experiments to gauge the accuracy
and reliability of the fully discrete finite element method developed in
the previous sections.  For the experiments we take $V_h=W_h =S_h^1$ for simplicity. We use a square domain $\Omega=(0,1)^2$ and take $\Tc_h$ to be a regular
triangulation of $\Omega$ consisting of right isosceles triangles, as
depicted in Fig.~\ref{fig3}.  We use a nonlinear multigrid method, which is
detailed in Appendix~\ref{app-multigrid-solver}, to solve the scheme
(\ref{eq3.2})--(\ref{eq3.5}) at each time step. We perform a battery of three tests
on the scheme.  First, we measure numerical convergence of the scheme in
the presence of added, artificial source terms.  Second, we measure the
numerical convergence of the scheme without source terms using a
Cauchy-convergence method.  Third, we conduct a test of spinodal
decomposition using varying values of the excess surface tension $\gamma$,
and demonstrate the discrete energy dissipation and mass conservation properties of the scheme.

	\begin{table}[t]
	\centering
	\begin{tabular}{cccccccc}
$h$ & $\norm{e_\vphi}{L^2}$ & rate & $\norm{e_\mu}{L^2}$ & rate & $\norm{e_p}{L^2}$ & rate
	\\
	\hline
$\nicefrac{\sqrt{2}}{16}$ & $8.683\times 10^{-3}$ & -- & $1.088\times 10^{-2}$ & -- & $1.270\times 10^{-2}$ & --
	\\
$\nicefrac{\sqrt{2}}{32}$ & $1.850\times 10^{-3}$ & 2.23 & $2.701\times 10^{-3}$ & 2.01 & $2.479\times 10^{-3}$ & 2.35
	\\
$\nicefrac{\sqrt{2}}{64}$ & $4.568\times 10^{-4}$ & 2.01 & $6.759\times 10^{-4}$ & 2.00 & $5.759\times 10^{-4}$ & 2.11
	\\
$\nicefrac{\sqrt{2}}{128}$ & $1.141\times 10^{-4}$ & 2.00 & $1.691\times 10^{-4}$ & 2.00 & $1.413\times 10^{-4}$ & 2.03
	\\
$\nicefrac{\sqrt{2}}{256}$ & $2.852\times 10^{-5}$ & 2.00 & $4.227\times 10^{-5}$ & 2.00 & $3.515\times 10^{-5}$ &  2.00
	\\
	\hline
	\end{tabular}
\caption{$L^2$ convergence test. The final time is $T = 1.0$, and the 
refinement path is taken to be $\tau = 25.6h^2$. The other parameters 
are $\varepsilon = \gamma = 1.0$; $\Omega = (0,1)^2$. The global error 
at $T$ is expected to be $\mathcal{O}(\tau)+\mathcal{O}\left(h^2\right) 
= \mathcal{O}\left(h^2\right)$, and this is confirmed.}
	\label{tab1}
	\end{table}

	\begin{table}
	\centering
	\begin{tabular}{cccccccc}
$h$ & $\norm{e_\vphi}{H^1}$ & rate & $\norm{e_\mu}{H^1}$ & rate & $\norm{e_p}{H^1}$ & rate
	\\
	\hline
$\nicefrac{\sqrt{2}}{16}$ & $2.886\times 10^{-1}$ & -- & $2.907\times 10^{-1}$ & -- & $2.943\times 10^{-1}$ & --
	\\
$\nicefrac{\sqrt{2}}{32}$ & $1.455\times 10^{-1}$ & 0.99 & $1.462\times 10^{-1}$ & 0.99 & $1.466\times 10^{-1}$ & 1.01
	\\
$\nicefrac{\sqrt{2}}{64}$ & $7.290\times 10^{-2}$ & 1.00 & $7.320\times 10^{-2}$ & 1.00 & $7.313\times 10^{-2}$ & 1.00
	\\
$\nicefrac{\sqrt{2}}{128}$ & $3.647\times 10^{-2}$ & 1.00 & $3.660\times 10^{-2}$ & 1.00 & $3.653\times 10^{-2}$ & 1.00
	\\
$\nicefrac{\sqrt{2}}{256}$ & $1.824\times 10^{-2}$ & 1.00 & $1.839\times 10^{-2}$ & 1.00 & $1.826\times 10^{-2}$ &  1.00
	\\
	\hline
	\end{tabular}
\caption{$H^1$ convergence test. The final time is $T = 1.0$, and the 
refinement path is taken to be $\tau = 1.6h$. The other parameters are 
$\varepsilon = \gamma = 1.0$; $\Omega = (0,1)^2$. The global error at $T$ 
is expected to be $\mathcal{O}(\tau)+\mathcal{O}(h) = \mathcal{O}(h)$, and 
this is confirmed.}
	\label{tab2}
	\end{table}

For the convergence of the problem with source terms, we solve a
problem of the following form: find $\left(p^m_h,
\mu^m_h,\vphi^m_h\right)\in \mathring{V}_h\times V_h\times V_h$, such that
        \begin{alignat}{2}
\bigl(\nab\phm+\gamma \vphi_h^{m-1}\nab\mu_h^m,\nab q_h\bigr) 
&= \prodt{s_1({\bf x},t_m)}{q_h}  &&\quad \forall \  q_h\in V_h, 
\label{with-source-1} \\
\prodt{d_t\vphihm}{\nu_h} +\vep\bigl(\nab \mu_h^m,\nab\nu_h \bigr) 
\hskip 1.0in & && \label{with-source-2} \\
+\prodt{\vphi_h^{m-1}[\nab \phm+\gamma\vphi_h^{m-1}\nab \mu_h^m]}{\nab \nu_h} 
&= \prodt{s_2({\bf x},t_m)}{\nu_h}  &&\quad\forall \ \nu_h\in V_h, \nonumber\\
\prodt{\mu_h^m}{\psi_h}-\vep \prodt{\nab\vphihm}{\nab\psi_h}
-\frac{1}{\vep} \prodt{f_h^m}{\psi_h} &=\prodt{s_3({\bf x},t_m)}{\psi_h} 
&&\quad\forall \ \psi_h\in V_h, \label{with-source-3} \\
\vphi_h^0 &=\vphi_{0h}, && \label{with-source-4}
        \end{alignat}
for $m=1,\ldots, M$, where the source terms are chosen so that the
solution of the corresponding continuous problem is precisely
        \begin{equation}
p(x,y,t)= \mu(x,y,t) = \vphi(x,y,t) = \cos(\pi t) \cdot g(x)\cdot g(y)  ,
        \end{equation}
with $g(\xi) = 16\xi^2(\xi-1)^2$.  The initial data are precisely given
by $\vphi_{0h} = \mathcal{I}_h\left(\vphi(\, \cdot\, ,0)\right)$,
where $\mathcal{I}_h :   H^2\left(\Omega\right) 
\to V_h$ is the standard nodal interpolation operator.  All integrations are done exactly using the appropriate Gauss-quadrature rules.  This is of course made possible since we are using polynomials in space.  The exact values of all of the other parameters used in the test are given in the captions of Tabs.~\ref{tab1} and \ref{tab2}. The
results of an $L^2$ error analysis using a quadratic refinement path
are found in Tab.~\ref{tab1} and confirm the expected optimal second-order convergence rate in this
case. The results of an $H^1$ error analysis using a linear refinement
path are found in Tab.~\ref{tab2} and confirm the expected optimal first-order convergence rate for
this case.  Notice that the approximations $p^m_h$, $\vphi^m_h$, and $\mu^m_h$ all appear to converge at the same optimal rates, in both cases.

	\begin{table}[t]
	\centering
	\begin{tabular}{ccccccccc}
$h_c$ & $h_f$ & $\norm{\delta_\vphi}{L^2}$ & rate & $\norm{\delta_\mu}{L^2}$ & rate & $\norm{\delta_p}{L^2}$ & rate
	\\
	\hline
$\nicefrac{\sqrt{2}}{16}$ & $\nicefrac{\sqrt{2}}{32}$ & $5.514\times 10^{-2}$ & -- & $2.890\times 10^{-1}$ & -- & $3.099\times 10^{-2}$ & --
	\\
$\nicefrac{\sqrt{2}}{32}$ & $\nicefrac{\sqrt{2}}{64}$ & $2.165\times 10^{-2}$ & 1.35 & $1.229\times 10^{-1}$ & 1.23 & $1.148\times 10^{-2}$ & 1.43
	\\
$\nicefrac{\sqrt{2}}{64}$ & $\nicefrac{\sqrt{2}}{128}$ & $6.284\times 10^{-3}$ & 1.78 & $3.588\times 10^{-2}$ & 1.78 & $3.250\times 10^{-3}$ & 1.82
	\\
$\nicefrac{\sqrt{2}}{128}$ & $\nicefrac{\sqrt{2}}{256}$ & $1.636\times 10^{-3}$ & 1.94 & $9.327\times 10^{-3}$ & 1.94 & $8.420\times 10^{-4}$ & 1.95
	\\Ä
$\nicefrac{\sqrt{2}}{256}$ & $\nicefrac{\sqrt{2}}{512}$ & $4.132\times 10^{-4}$ & 1.99 & $2.355\times 10^{-3}$ & 1.99 & $2.128\times 10^{-4}$ &  1.98
	\\
	\hline
	\end{tabular}
\caption{$L^2$ Cauchy convergence test. The final time is 
$T = 4.0\times 10^{-2}$, and the refinement path is taken to be 
$\tau = 1.024h^2$. The other parameters are $\varepsilon =6.25\times 10^{-2}$;
$\gamma = 1.25\times 10^{-1}$; $\Omega = (0,1)^2$.  The Cauchy difference is 
defined via $\delta_\vphi := \vphi_{h_f}-\vphi_{h_c}$, where the 
approximations are evaluated at time $t=T$, and analogously for 
$\delta_\mu$, and $\delta_p$.  The norm of the Cauchy difference 
at $T$ is expected to be $\mathcal{O}(\tau)+\mathcal{O}\left(h^2\right) 
= \mathcal{O}\left(h^2\right)$.}
	\label{tab3}
	\end{table}

	\begin{table}
	\centering
	\begin{tabular}{ccccccccc}
$h_c$ & $h_f$ & $\norm{\delta_\vphi}{H^1}$ & rate & $\norm{\delta_\mu}{H^1}$ & rate & $\norm{\delta_p}{H^1}$ & rate
	\\
	\hline
$\nicefrac{\sqrt{2}}{16}$ & $\nicefrac{\sqrt{2}}{32}$ & $8.569\times 10^{-1}$ & -- & $1.301\times 10^{-0}$ & -- & $8.371\times 10^{-2}$ & --
	\\
$\nicefrac{\sqrt{2}}{32}$ & $\nicefrac{\sqrt{2}}{64}$ & $4.160\times 10^{-1}$ & 1.04 & $6.295\times 10^{-1}$ & 1.04 & $3.715\times 10^{-1}$ & 1.17
	\\
$\nicefrac{\sqrt{2}}{64}$ & $\nicefrac{\sqrt{2}}{128}$ & $2.061\times 10^{-1}$ & 1.01& $3.111\times 10^{-1}$ & 1.02 & $1.779\times 10^{-2}$ & 1.06
	\\
$\nicefrac{\sqrt{2}}{128}$ & $\nicefrac{\sqrt{2}}{256}$ & $1.029\times 10^{-1}$ & 1.00& $1.554\times 10^{-1}$ & 1.00 & $8.834\times 10^{-3}$ & 1.01
	\\
$\nicefrac{\sqrt{2}}{256}$ & $\nicefrac{\sqrt{2}}{512}$ & $5.146\times 10^{-2}$ & 1.00 & $7.777\times 10^{-2}$ & 1.00 & $4.422\times 10^{-3}$ &  1.00
	\\
	\hline
	\end{tabular}
\caption{$H^1$ Cauchy convergence test. The final time is 
$T = 4.0\times 10^{-2}$, and the refinement path is taken to 
be $\tau = 2.0\times 10^{-3}h$. The other parameters are 
$\varepsilon =6.25\times 10^{-2}$;  $\gamma = 1.25\times 10^{-1}$; 
$\Omega = (0,1)^2$.  The norm of the Cauchy difference at $T$ is expected to 
be $\mathcal{O}(\tau)+\mathcal{O}\left(h\right) = \mathcal{O}\left(h\right)$.}
	\label{tab4}
	\end{table}

We now give the results of a test without any artificial sources. In other 
words, we solve the scheme (\ref{with-source-1})--(\ref{with-source-3}) 
with $s_i \equiv 0$, $i=1,2,3$.  The initial data are taken to be
	\begin{equation}
\vphi_{0h} = \mathcal{I}_h\left( \frac{\big[1.0-\cos(4.0\pi x)\big]\cdot \big[1.0-\cos(2.0\pi y)\big]}{2}-1.0\right) ,
	\end{equation}
and the parameters are given in the captions of Tabs.~\ref{tab3} and \ref{tab4}. Note that in this case we are not in possession of the exact solutions.   To circumvent this, we measure the difference of the computed
solutions at successive resolutions.  Specifically, we compute the rate 
at which the Cauchy difference $\delta_\psi := \psi^{M_f}_{h_f} 
- \psi^{M_c}_{h_c}$ converges to zero, where $h_f=2h_c$, $\tau_f 
= 2^p\tau_c$ ($p=1$ for a linear refinement path and $p=2$ for a 
quadratic refinement path), and $\tau_fM_f = \tau_cM_c=T$. A quadratic 
refinement path, \emph{i.e.}, $\tau = Ch^2$, is taken when measurements 
are made in the $L^2$ norm,  and a linear refinement path, \emph{i.e.}, 
$\tau = Ch$, when measurements are made in the $H^1$ norm. The results of an $L^2$ Cauchy error analysis are found 
in Tab.~\ref{tab3} and confirm second-order convergence in this case.  The 
results of an $H^1$ Cauchy error analysis are found in Tab.~\ref{tab4} and 
confirm first-order convergence in this case.

Our final test is a simulation of spinodal decomposition with different values of $\gamma$.  Specifically, we solve the scheme (\ref{with-source-1})--(\ref{with-source-3}) with $s_i \equiv 0$, $i=1,2,3$, and with three values of $\gamma$; namely, $\gamma = 0$, which yields
the familiar Cahn-Hilliard model; $\gamma = 0.01$; and $\gamma = 0.04$.  
Furthermore, we take 
$\vep = 0.01$, $h = \frac{\sqrt{2}}{256}$, $\tau = 1\times 10^{-3}$, 
and $T = 0.1$.  We use the same randomized initial data for the three 
simulations represented in Fig.~\ref{fig1}, where the average value 
of $\vphi$ is approximately $-0.1$.  As expected, the mixture phase 
separates into domains wherein $\vphi\approx -1$ and 
$\vphi\approx +1$. Afterwards the system coarsens, as larger phase regions
grow at the expense of smaller ones.  The energy for the three 
simulations is displayed in Fig.~\ref{fig2}.  A general trend emerges, 
where, at least at early times, the energy decreases faster and the 
coarsening process is appears to be accelerated as the excess surface
tension $\gamma$ increases.  This behavior is expected and was observed
in similar finite difference calculations undertaken in~\cite{wise10}.

	\begin{figure}
	\begin{center}
	\includegraphics[width=5in]{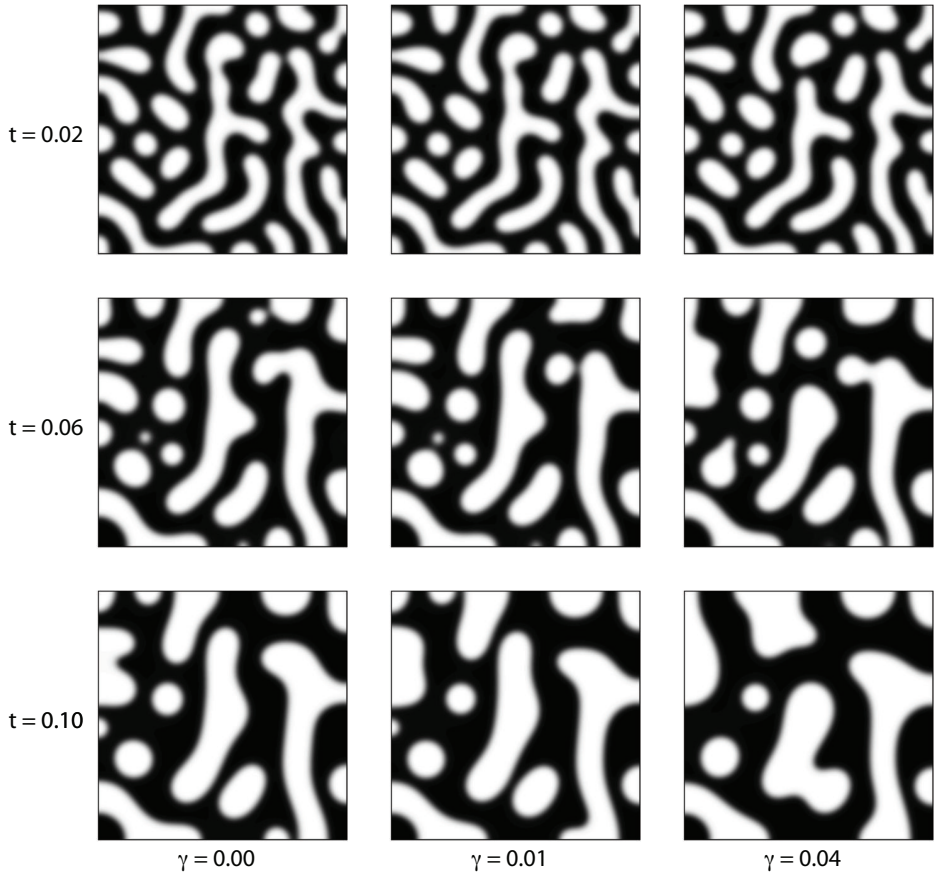}
\caption{Spinodal decomposition for three values of $\gamma$.  The domain 
is $\Omega = (0,1)\times(0,1)$ and $\vep = 0.01$.  The initial data 
are exactly the same for the three simulations.  The time step size is 
$\tau = 1.0\times 10^{-3}$, and $h = \nicefrac{\sqrt{2}}{256}$.  We use a 
uniform mesh, as in Fig.~\ref{fig3}.  The corresponding energy plots are 
shown in Fig.~\ref{fig2}.  The average value of $\vphi$ for all three 
simulations is approximately $-0.1$.  For $\gamma=0.01$, the mass variation 
over the simulated time is only $1\times 10^{-12}$.  The max and min values 
of $\vphi$ are very near the values $+1$ and $-1$, respectively.}
	\label{fig1}
	\end{center}
	\end{figure}

	\begin{figure}
	\begin{center}
	\includegraphics[width=5in]{./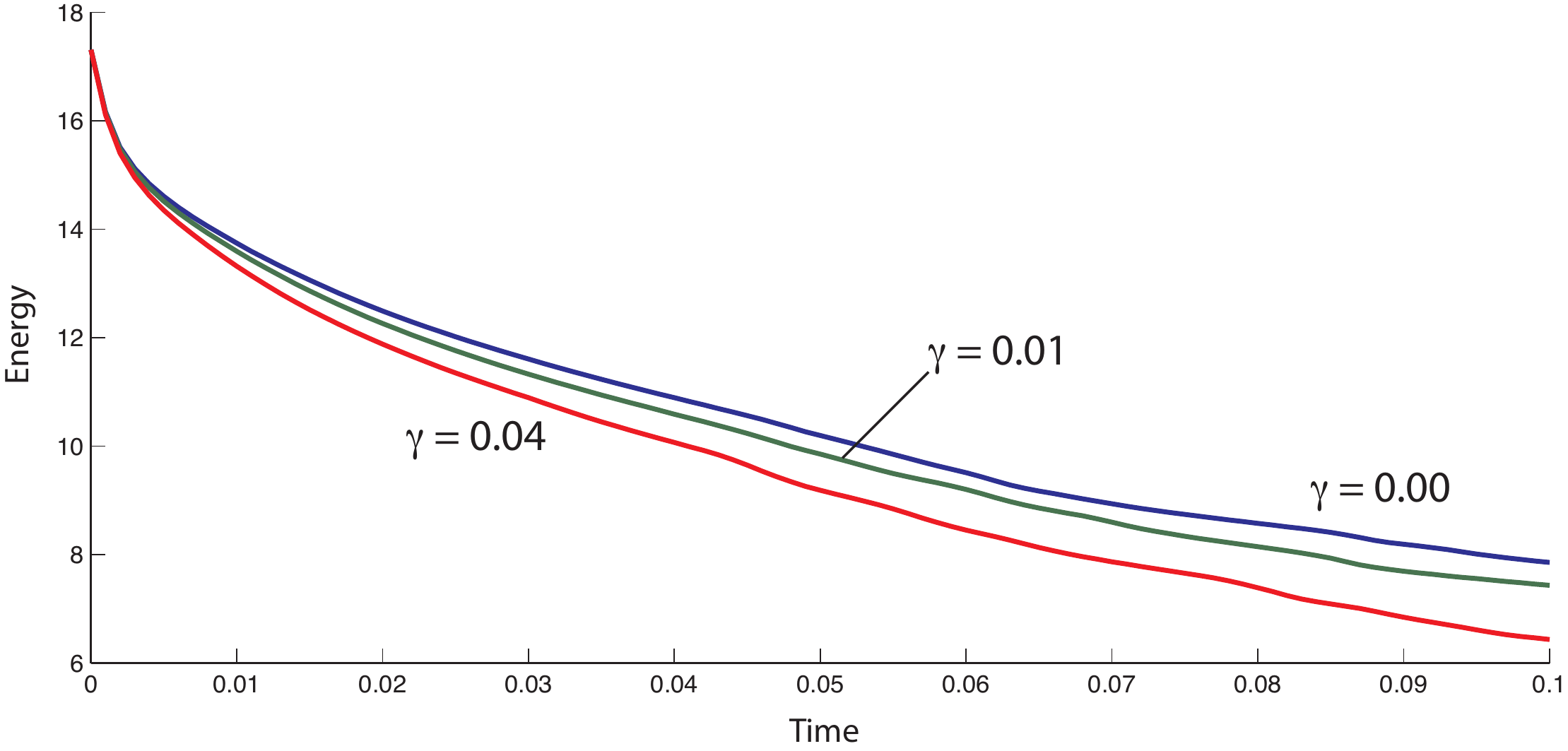}
\caption{Energy plots for the spinodal decomposition simulations depicted 
in Fig.~\ref{fig1}. The parameters for the simulations are given in the 
caption of Fig.~\ref{fig1}.  The energy is observed to decrease at each 
time step.  The general trend, at least at early times, is that the energy 
decreases faster with increasing values of the excess surface tension $\gamma$.}
	\label{fig2}
	\end{center}
	\end{figure}

Note that we have proved that (at the theoretical level) the energy is 
non-increasing at each time step.  This is observed in our computations.  
In addition to this, the mass, \emph{i.e},  $\int_\Omega \vphi_h \, dx$, 
at the theoretical level is expected to be unchanging from one time step 
to the next.  On the practical level, we observe very little mass variation.  
For example, for the $\gamma=0.01$ case depicted in Fig.~\ref{fig1}, where 
initially $\int_\Omega \vphi_{0h} \, dx \approx -0.1$,  we observe mass 
variation of only $1\times 10^{-12}$ over the whole of the simulation time.  
Note that our multigrid iteration stopping tolerance is of the same order, 
namely,  $tol = 1\times 10^{-12}$ in (\ref{stopping-tolerance}).

	
\section*{Acknowledgments} 
The authors would like to thank Professor Xiaoming Wang of Florida State
University for his helpful discussions and for bringing the reference
\cite{Wang_Zhang10} to their attention.

\appendix
\section{Nonlinear Multigrid Solver} \label{app-multigrid-solver}
In this appendix we give the full details of the nonlinear multigrid solver 
that is used to march the scheme in time.  Suppose $\Omega\subseteq \mathbb{R}^2$ 
is polygonal, and assume that $\Tc_\ell$, $\ell = 0,1,\ldots,L$, is a 
hierarchy of nested 
triangulations of $\Omega$ as suggested in Fig.~\ref{fig1}.  In particular, 
$\Tc_{\ell}$ is obtained by subdividing the triangles of $\Tc_{\ell-1}$ 
into 4 congruent sub-triangles.  Note that $h_{\ell-1} = 2h_\ell$, 
$\ell=1,\ldots, L$, and that $\left\{\Tc_{\ell}\right\}$ is a quasi-uniform 
family.  For simplicity, we shall use $P_1$ finite element spaces and use the 
same space for the pressure as is used for the other variables.  We define
\begin{displaymath}
V_\ell=\left\{v\in C^0(\overline{\Ome}) \ \middle| \ v|_K\in P_1(K)
\ \ \forall K\in \Tc_\ell \right\}, 
\end{displaymath}
for $\ell = 0, \ldots, L$ and observe the nested space chain 
$V_0 \subset V_{1} \subset V_{3} \subset \cdots \subset V_{L}$. Because of 
this nestedness, there is a natural injection operation 
$I_{\ell-1,\ell} : V_{\ell-1} \hookrightarrow V_\ell$ defined 
by $I_{\ell-1,\ell}(v) = v$, for all $v\in V_{\ell-1}$,  
$\ell = 1, \ldots , L$. Now, let $B_\ell = \left\{u_{\ell,i}(\bx)\right\}_{i=1}^{N_\ell}$ be the nodal basis 
for $V_\ell$, $\ell = 0, 1, \ldots , L$.  In other words, 
$u_{\ell,j}\left({\bf x}_{\ell,i}\right)=\delta_{i,j}$, where 
$\left\{{\bf x}_{\ell,i}\right\}_{i=1}^{N_\ell}$ are the nodes of 
$\Tc_\ell$.  We have level-wise representations of the 
unknowns of the form
\begin{equation}
\varphi_\ell(\bx) = \sum_{i=1}^{N_\ell} \varphi_{\ell,i} u_{\ell,i}(\bx) 
\iff \bfvphi_\ell = \left(\vphi_{\ell,1},\vphi_{\ell,2},\ldots ,
\vphi_{\ell,N_\ell}\right)^T  , 
\end{equation}
and similarly for $\mu_\ell(\bx)$ and $p_\ell(\bx)$. Define the prolongation 
matrix via $\Psf_{\ell-1,\ell} := \Isf_{\ell-1,\ell}$, 
where $\Isf_{\ell-1,\ell}$ is the $N_\ell\times N_{\ell-1}$ matrix 
representation of the injection operator $I_{\ell-1,\ell}$ with respect 
to the bases $B_{\ell-1}$ and $B_\ell$.  There are two restriction 
operations --- \emph{i.e.}, operations transferring information from the 
finer space $V_\ell$ to the coarser space $V_{\ell-1}$ --- that we shall use.  
The first is called the canonical 
restriction and, in matrix form, is the $N_{\ell-1}\times N_\ell$ 
matrix defined via $\Rsf_{\ell,\ell-1} := \Isf_{\ell-1,\ell}^T$
\cite{braess07, bs08}.  The second is defined via
\begin{equation}
\hat{R}_{\ell,\ell-1}(v) = \sum_{i = 1}^{N_{\ell-1}} v
\left({\bf x}_{\ell-1,i}\right)u_{\ell-1,i}\left({\bf x}\right) 
\quad \forall v\in V_\ell, 
\end{equation}
where the points ${\bf x}_{\ell-1,i}$ are the nodes of the mesh 
$\Tc_{\ell-1}$.  Note that 
$\left\{{\bf x}_{\ell-1,i}\right\}_{i=1}^{N_{\ell-1}}\subset 
\left\{{\bf x}_{\ell,i}\right\}_{i=1}^{N_\ell}$ by construction. 
By $\hat{\Rsf}_{\ell,\ell-1}$ we denote the matrix representation of 
$\hat{R}_{\ell,\ell-1}$  with respect to the bases 
$B_\ell$ and $B_{\ell-1}$.

In the present framework, our nonlinear finite element scheme is defined 
on the finest level, $\ell = L$, as follows: find the triple 
$(p_L, \mu_L, \vphi_L) \in \mathring{V}_L\times V_L\times V_L$ such that  
\begin{alignat}{2}
\bigl(\nab p_L+\gamma \vphi_L^{m-1}\nab\mu_L,\nab q_L\bigr) &=0 
&&\quad \forall q_L\in \mathring{V}_L \ , \label{eqA3.2} \\
\prodt{\vphi_L}{\nu_L} +\tau\vep\bigl(\nab \mu_L,\nab\nu_L \bigr) 
\hskip 1.0in & && \label{eqA3.3} \\
+\tau\prodt{\vphi_L^{m-1}\left[\nab p_L 
+\gamma\vphi_L^{m-1}\nab \mu_L\right]}{\nab \nu_L}  
&= \prodt{\varphi_L^{m-1}}{\nu_L} &&\quad\forall \nu_L\in V_L, \nonumber \\
\prodt{\mu_L}{\psi_L}-\vep \prodt{\nab\vphi_L}{\nab\psi_L} 
-\frac{1}{\vep} \prodt{\left(\varphi_L\right)^2\varphi_L}{\psi_L} 
&=-\frac{1}{\vep} \prodt{\varphi_L^{m-1}}{\psi_L} 
&&\quad\forall \psi_L\in V_L, \label{eqA3.4}
\end{alignat}
where $\varphi_L^{m-1}\in V_L$ is given.  We have dropped the superscript 
$m$ (the time step index) on the unknowns for simplicity. Theorem \ref{thm-existence-uniqueness} guarantees that this problem always has a unique 
solution. The nonlinear system (\ref{eqA3.2})--(\ref{eqA3.4}) may be written as 
	\begin{eqnarray}
\Asf_L\bp_L+\gamma \Csf_L\bfmu_L &=& {\bf 0} , 
	\label{matrix-sys-1} 
	\\
\Msf_L\bfvphi_L + \tau\left(\vep\Asf_L+\gamma \Bsf_L\right) \bfmu_L  + \tau\Csf_L\bp_L &=& \Msf_L\bfvphi_L^{m-1}  ,
	\label{matrix-sys-2} 
	\\
\vep\Asf_L\bfvphi_L+\frac{1}{\vep}\Qsf_L\left(\bfvphi_L\right)\bfvphi_L - \Msf_L\bfmu_L &=& \frac{1}{\vep} \Msf_L\bfvphi_L^{m-1}, 
	\label{matrix-sys-3}
	\end{eqnarray}
where $\Asf_L$, $\Bsf_L$, $\Csf_L$, $\Msf_L$, and $\Qsf_L\left(\bfvphi_L\right)$ are $N_L\times N_L$ matrices whose components are
	\begin{alignat}{2}
\left[\Asf_L\right]_{i,j} &:= \prodt{\n u_{L,j}}{\n u_{L,i}} , &
\quad \left[\Bsf_L\right]_{i,j} &:= \prodt{\left(\vphi_L^{m-1}\right)^2 
\n u_{L,j}}{\n u_{L,i}}  , 
	\\
\left[\Csf_L\right]_{i,j} &:= \prodt{\vphi_L^{m-1} \n u_{L,j}}{\n u_{L,i}}, &  \quad \left[\Msf_L\right]_{i,j} &:= \prodt{u_{L,j}}{u_{L,i}}, 
	\\
\left[\Qsf_L\left(\bfvphi_L\right)\right]_{i,j}  &:= \prodt{\left(\vphi_L\right)^2 u_{L,j}}{u_{L,i}} . & &
	\end{alignat}

\begin{figure}
\begin{center}
\includegraphics[width=5.1in]{./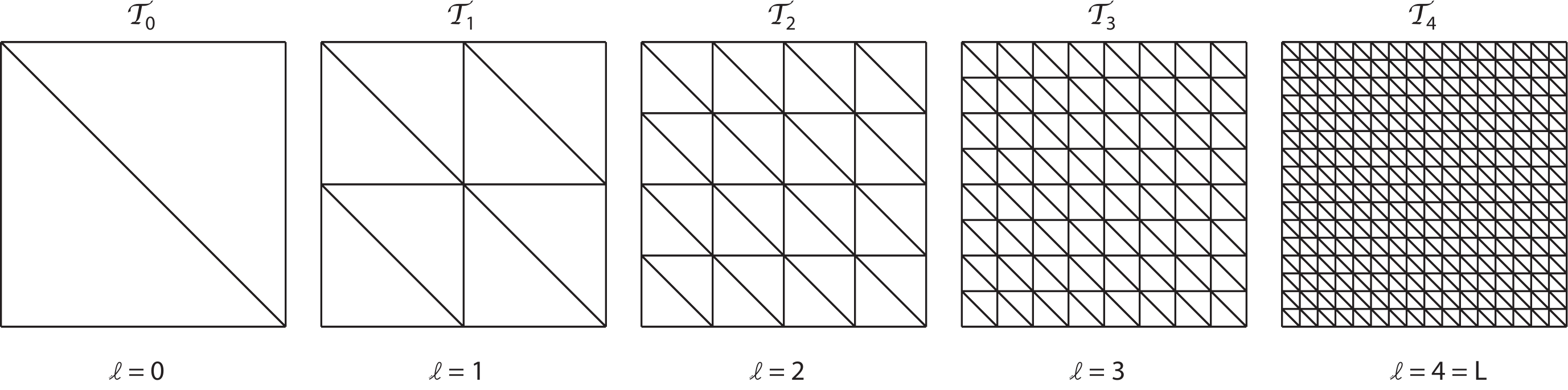}
\caption{A hierarchical triangulation, $\Tc_\ell$, $\ell = 0,1,\ldots,L$, of a square domain $\Omega$.  Here $L = 4$, though in typical calculations we may use $L = 8$ or 9. } \label{fig3}
\end{center}
\end{figure}

We solve (\ref{matrix-sys-1})--(\ref{matrix-sys-3}) using a nonlinear multigrid method~\cite[Ch.~5, \S 6]{braess07}.  This requires that we split the equations into 
source (${\bf s}$) and operator ($\Nsf$) terms:
	\begin{alignat}{2}
{\bf s}_L^{(1)} &:= {\bf 0}, & \quad \Nsf_L^{(1)}\left(\bfphi_L\right) &:= \Asf_L\bp_L+\gamma \Csf_L\bfmu_L \ , 
	\label{nonlinear-operator-1}
	\\
{\bf s}_L^{(2)} &:= \Msf_L\bfvphi_L^{m-1}  , & \quad \Nsf_L^{(2)}\left(\bfphi_L\right) &:= \Msf_L\bfvphi_L  + \tau\left(\vep\Asf_L+\gamma \Bsf_L\right) \bfmu_L + \tau\Csf_L\bp_L ,
	\label{nonlinear-operator-2}
	\\
{\bf s}_L^{(3)} &:= \frac{1}{\vep} \Msf_L\bfvphi_L^{m-1}, & \quad \Nsf_L^{(3)}\left(\bfphi_L\right) &:= \vep\Asf_L\bfvphi_L
+\frac{1}{\vep}\Qsf_L\left(\bfvphi_L\right)\bfvphi_L - \Msf_L\bfmu_L  ,
	\label{nonlinear-operator-3}
	\end{alignat}
where $\bfphi_L := \left[\bp_L,\bfmu_L,\bfvphi_L \right]$ is the 
$N_L\times 3$ array of unknowns.  We must also define a ``consistent" version of the nonlinear operator on all of the coarser levels. There are a number of ways to proceed in this task~\cite{braess07}; we choose the following path.   Suppose that $\ell\in\left\{0,1,\ldots, L-1\right\}$ is given. We restrict the known solution from the previous time step to the coarser levels via
	\begin{eqnarray}
\bfvphi_\ell^{m-1}:= \prod_{j = \ell+1}^L\hat{\Rsf}_{j,j-1}\bfvphi_{L}^{m-1}
&=&\left(\vphi_{\ell,1}^{m-1},\vphi_{\ell,2}^{m-1},\ldots 
\vphi_{\ell,N_\ell}^{m-1}\right)^T, 
	\\
\iff \varphi_\ell^{m-1}(\bx) &=&\sum_{i=1}^{N_\ell} \varphi_{\ell,i}^{m-1} u_{\ell,i}(\bx) .
	\nonumber
	\end{eqnarray}
Now, given any $\psi_\ell\in V_\ell$ with the representation
	\begin{equation}
\bfpsi_\ell = \left(\psi_{\ell,1},\psi_{\ell,2},\ldots,\psi_{\ell,N_\ell}  \right)^T \iff \psi_\ell(\bx)=\sum_{i=1}^{N_\ell}\psi_{\ell,i}u_{\ell,i}(\bx),
	\end{equation}
we define
	\begin{alignat}{2}
\left[\Asf_\ell\right]_{i,j} &:= \prodt{\n u_{\ell,j}}{\n u_{\ell,i}}, &  \quad \left[\Bsf_\ell\right]_{i,j}  &:= \prodt{\left(\vphi_\ell^{m-1}\right)^2 \n u_{\ell,j}}{\n u_{\ell,i}}, 
	\\
\left[\Csf_\ell\right]_{i,j} &:= \prodt{\vphi_\ell^{m-1} \n u_{\ell,j}}{\n u_{\ell,i}}, & \quad \left[\Msf_\ell\right]_{i,j} &:= \prodt{u_{\ell,j}}{u_{\ell,i}}, 
	\\
\left[\Qsf_\ell\left(\bfpsi_\ell\right)\right]_{i,j} &:= \prodt{\left(\psi_\ell\right)^2 u_{\ell,j}}{u_{\ell,i}}. & &
 	\end{alignat}
Observe that 
\begin{equation}
\Asf_\ell = \Rsf_{\ell+1,\ell}\Asf_{\ell+1}\Psf_{\ell,\ell+1}, 
\quad \Msf_\ell = \Rsf_{\ell+1,\ell}\Msf_{\ell+1}\Psf_{\ell,\ell+1},
\end{equation}
which is standard in the finite element setting~\cite{braess07, bs08} 
and is the reason for the term ``canonical" describing $\Rsf_{\ell+1,\ell}$. 
On the other hand,
\begin{equation}
\Bsf_\ell \approx \Rsf_{\ell+1,\ell}\Bsf_{\ell+1}\Psf_{\ell,\ell+1}, 
\quad \Csf_\ell \approx \Rsf_{\ell+1,\ell}\Csf_{\ell+1}\Psf_{\ell,\ell+1}.
\end{equation}
(Note that we could have recursively defined 
$\Bsf_\ell = \Rsf_{\ell+1,\ell}\Bsf_{\ell+1}\Psf_{\ell,\ell+1}$, and similarly for $\Csf_\ell$.  
But it turns out that this is an unnecessary complication from the point of view of the convergence of the algorithm.) Finally, we have 
	\begin{eqnarray}
\Nsf_\ell^{(1)}\left(\bfxi_\ell\right) &:=& \Asf_\ell\bq_\ell +\gamma \Csf_\ell\bfnu_\ell,
	\label{nonlinear-operator-k-1}
	\\
\Nsf_\ell^{(2)}\left(\bfxi_\ell\right) &:=& \Msf_\ell\bfpsi_\ell  + \tau\left(\vep\Asf_\ell+\gamma \Bsf_\ell\right) \bfnu_\ell  + \tau\Csf_\ell\bq_\ell,
	\label{nonlinear-operator-k-2}
	\\
\Nsf_\ell^{(3)}\left(\bfxi_\ell\right) &:=& \vep\Asf_\ell\bfvphi_\ell +\frac{1}{\vep}\Qsf_\ell\left(\bfpsi_\ell\right)\bfpsi_\ell  - \Msf_\ell\bfnu_\ell, 
	\label{nonlinear-operator-k-3}
	\end{eqnarray}
where $\bfxi_\ell := \left[\bq_\ell,\bfnu_\ell,\bfpsi_\ell \right]$ is 
any given $N_\ell\times 3$ array of unknowns.

We are now in a position to define the recursive nonlinear multigrid V-Cycle operator~\cite[Ch.~5, \S 6]{braess07}, 
which is the heart of our solver. In the following the 
superscript $k$ is the V-Cycle loop index (not the time step index).  Let 
$\bfphi_\ell^{k-1} := \left[\bp_\ell^{k-1},\bfmu_\ell^{k-1},\bfvphi_\ell^{k-1} \right]$ 
denote the current, level-$\ell$ multigrid iterate.   
For any $N_\ell\times 3$ array of unknowns $\bfxi_\ell$, define $\Nsf_\ell\left(\bfxi_\ell\right) 
:= \left[\Nsf_\ell^{(1)}\left(\bfxi_\ell\right), 
\Nsf_\ell^{(2)}\left(\bfxi_\ell\right), 
\Nsf_\ell^{(3)}\left(\bfxi_\ell\right)\right]$, and 
${\bf s}_\ell := \left[{\bf s}_\ell^{(1)}, {\bf s}_\ell^{(2)}, 
{\bf s}_\ell^{(3)}\right]$. Note that these 
last two objects are $N_\ell \times 3$ arrays by design. We define the action of the recursive  nonlinear multigrid V-Cycle operator
\begin{equation}
\bfphi_\ell^k = \mbox{NMGM}
\left(\ell,\bfphi_\ell^{k-1},\Nsf_\ell,{\bf s}_\ell,\lambda\right)
\end{equation}
in the following 3 steps:
\begin{enumerate}
\item
Pre-smoothing:
\begin{itemize}
\item
Given $\bfphi_\ell^{k-1}$, compute a smoothed level-$\ell$ approximation $\bar\bfphi_\ell$:
\begin{equation}
\bar\bfphi_\ell = {\mathcal S}^\lambda
\left(\bfphi_\ell^{k-1},\Nsf_\ell,{\bf s}_\ell \right) ,
\end{equation}
where ${\mathcal S}$ is a smoothing (or relaxation) operator, and $\lambda >0$ is the number of smoothing sweeps. 
\end{itemize}
	
\item
Coarse-grid correction: 
\begin{itemize}
\item
Compute coarse-level initial iterate:
\begin{equation}
\bar\bfphi_{\ell-1} = \hat{\Rsf}_{\ell,\ell-1} \bar\bfphi_\ell \ .
\end{equation}
\item
Compute the coarse-level right-hand side:
\begin{equation}
{\bf s}_{\ell-1}  = \Rsf_{\ell,\ell-1} \left( {\bf s}_\ell
-\Nsf_\ell( \bar\bfphi_\ell) \right) 
+ \Nsf_{\ell-1}\left( \bar\bfphi_{\ell-1} \right).
\end{equation}

\item
Compute an approximate solution $\hat{\bfxi}_{\ell-1}$ of the following 
coarse grid  equation:
\begin{equation}
\Nsf_{\ell-1}(\bfxi_{\ell-1}) = {\bf s}_{\ell-1}.  \label{coeq}
\end{equation}
Note that this equation is uniquely solvable by 
Theorem \ref{thm-existence-uniqueness}.
	
\begin{itemize}
\item If $\ell=1$ employ $\lambda$ smoothing steps:
\begin{equation}
\hat\bfxi_0 = \mathcal{S}^\lambda 
\left(\bar\bfphi_0, \Nsf_0,{\bf s}_0\right).
\end{equation}

\item
If $\ell>1$ get an approximate solution to Eq. (\ref{coeq}) 
using $\bar\bfphi_{\ell-1}$ as initial guess:
\begin{equation}
\hat{\bfxi}_{\ell-1} = \mbox{NMGM}\left(\ell-1,\bar\bfphi_{\ell-1},
\Nsf_{\ell-1},{\bf s}_{\ell-1},\lambda\right).
\end{equation}
\end{itemize}

\item
Compute the coarse-grid correction:
\begin{equation}
\hat{\bfphi}_{\ell-1} = \hat{\bfxi}_{\ell-1} - \bar{\bfphi}_{\ell-1}.
\end{equation}

\item
Compute the coarse-grid-corrected approximation at level $k$:
\begin{equation}
\hat{\bfphi}_\ell=\Psf_{\ell-1,\ell}\hat{\bfphi}_{\ell-1}+\bar{\bfphi}_\ell.
\end{equation}
	
\end{itemize}
	
\item
Post-smoothing:
\begin{itemize}
\item
Finally, compute $\bfphi_\ell^k$ by applying $\lambda$ smoothing steps:
\begin{equation}
\bfphi_\ell^k = \mathcal{S}^\lambda 
\left(\hat{\bfphi}_\ell,\Nsf_\ell, {\bf s}_\ell\right).
\end{equation}
	
\end{itemize}
\end{enumerate}
When
	\begin{equation}
\sqrt{\frac{1}{3N_L}\sum_{j=1}^3\sum_{i=1}^{N_L}\left(\left[{\bf s}^{(j)}_L-\Nsf^{(j)}_L\left(\bfphi_L^k\right)\right]_i\right)^2 \ } < tol
	\label{stopping-tolerance}
	\end{equation}
we stop iterating and set 
$\bfphi_L^k \to \bfphi_L=\left[\bp_L,\bfmu_L,\bfvphi_L \right]$, the 
fine-level solution.  For smoothing, we use a nonlinear block Gau\ss-Seidel 
method, like that discussed in~\cite{wise10} for a similar finite-difference 
nonlinear multigrid method. The exact details are omitted for brevity, but the principal idea is that the nodal values $\left(p_{\ell,i},\mu_{\ell,i},\vphi_{\ell,i} \right)$ are always obtained simultaneously in the smoothing operation.   We use $\lambda=2$ or 3 in the smoothing step.


\end{document}